\numberwithin{equation}{subsection}
\newtheorem{defn}{Definition}[section]
\newtheorem{corollary}[defn]{Corollary}
\newtheorem{rem}[defn]{Remark}
\newtheorem{exm}[defn]{Example}
\newtheorem{lemma}[defn]{Lemma}
\newtheorem{theorem}[defn]{Theorem}
\newtheorem{xproof}{{\it Proof. }}
\newenvironment{definition}{\begin{defn}\em}{\end{defn}}
\newenvironment{remark}{\begin{rem}\em}{\end{rem}}
\newenvironment{example}{\begin{exm}\em}{\end{exm}}
\newenvironment{proof}{\begin{xproof}\em}{\end{xproof}}
\def\qed{\hspace{0.3cm}{\rule{1ex}{2ex}}}
\newcommand\V{\bigvee}
\newcommand\ie{i.e.}
\newcommand\eg{e.g.}
\newcommand\st{\mid}
\newcommand\cf{\textrm{cf.}}
\newcommand\opens{\operatorname{\mathcal{O}}}
\newcommand\topology{\operatorname{\Omega}}
\newcommand\spp{\operatorname{\varsigma}}
\newcommand\downsegment{\operatorname{\downarrow}}
\newcommand\Frm{\textit{Frm}}
\newcommand\Loc{\textit{Loc}}
\newcommand\sets{\textit{Sets}}
\newcommand\ident{\mathrm{id}}
\newcommand\ipi{\operatorname{\mathcal I}}
\newcommand\germ{\operatorname{germ}}
\newcommand\lcc{\operatorname{{\mathcal L}^{\vee}}}
\newcommand\Germs{\operatorname{Germs}}
\newcommand\intg[1]{\widetilde{#1}}
\newcommand\lres[2]{{_{#1}{\vert}{#2}}}
\newcommand\rres[2]{{{#1}{\vert}_{#2}}}
\newcommand\sections{\mathit{\Gamma}}
\newcommand\biGrpd{\mathbb{GRPD}}
\newcommand\biQOl{\mathbb{QOL}}
\newcommand\cov{\widehat}
\newcommand\act{\mathfrak a}
\newcommand\bct{\mathfrak b}
\newcommand\opp[1]{{#1}^{\mathrm{op}}}
\newcommand\rs{\operatorname{\mathsf{R}}}
\newcommand\LH{\textit{LH}}
\newcommand\Sh{\textit{Sh}}
\begin{document}

\title{Actions of \'etale-covered groupoids}

\author{Juan Pablo Quijano and Pedro Resende\thanks{Work funded by FCT/Portugal through the LisMath program and project PEst-OE/EEI/LA0009/2013.}}

\maketitle

\begin{abstract}
By restricting to a class of localic open groupoids $G$ which, similarly to Lie group\-oids, possess appropriate covers $\cov G\to G$ by \'etale group\-oids, we extend results about groupoid actions and quantales that were previously proved for \'etale groupoids but do not seem to work for arbitrary open groupoids. In particular we obtain a characterization of the category of $G$-actions as a category of quantale modules on $\opens(\cov G)$ that satisfy a condition related to the quantale $\opens(G)$. This  leads to a simple description of $G$-sheaves and the classifying topos of $G$ in terms of Hilbert $\opens(\cov G)$-modules. The bicategory whose 1-cells are the groupoid bi-actions is bi-equivalent to a corresponding bicategory of quantales and bimodules.
\\
\vspace*{-2mm}~\\
\textit{Keywords:} localic open groupoids, groupoid quantales, actions, sheaves, bi-actions\\
\vspace*{-2mm}~\\
2020 \textit{Mathematics Subject
Classification}: 18F10, 18F20, 18F70, 18F75, 18N10, 22A22
\end{abstract}

\tableofcontents

%%%%%%%%%%%%%%%%%%%%%%%%%%%%%%%%%%%%%
%%%%%%%%%%%%%%%%%%%%%%%%%%%%%%%%%%%%%
%%%%%%%%%%%%%%%%%%%%%%%%%%%%%%%%%%%%%
%%%%%%%%%%%%%%%%%%%%%%%%%%%%%%%%%%%%%

\section{Introduction}

It is well known that inverse semigroups are closely related to \'etale groupoids (see, \eg, \cites{Paterson,RenaultLNMath,Kumjian84}), and in \cite{Re07} 
quantales, more specifically inverse quantal frames, were put forward as mediating objects between the semigroups and the groupoids. In part the aim was to bring the correspondence to bear on localic groupoids rather than just topological groupoids, in particular making it constructive in a topos-theoretic sense, but also, independently, to provide an alternative algebraic language with which to describe \'etale groupoids, with the quantales being regarded as ring-like objects. This developed naturally into a program where various constructions for \'etale groupoids, such as actions and sheaves, are translated to quantale modules~\cites{RR,GSQS}. One difficulty is that, while the correspondence between the ensuing categories of actions of groupoids and modules on quantales is functorially well behaved, the actual correspondence between \'etale groupoids and their quantales is not, at least not in the sense of a direct correspondence between groupoid functors and quantale homomorphisms~\cite{Re07}, unless one severely restricts the class of functors~\cite{LL}. This problem is circumvented by considering bicategories and functoriality in the form of a bi-equivalence where the morphisms (1-cells) are groupoid bi-actions and quantale bimodules~\cite{Re15}, which furthermore enables one to `explain' why other notions of morphism between groupoids relate well to quantale homomorphisms --- just as they are known to relate well to $*$-homomorphisms of C*-algebras~\cite{Bun08} and to homomorphisms of inverse semigroups~\cite{BEM12}.

Another direction of research pertains to more general groupoids (or categories~\cite{KL}, toposes~\cite{HeymansGrQu}, etc.) that allow such algebraic treatments. Here the direct correspondence to inverse semigroups breaks down, but the relation to quantales does not, and it is the quantales that can be regarded as the natural algebraic language for handling general open groupoids~\cites{PR12,QRnonuspp}. However, there are additional difficulties that did not exist in the \'etale case. In particular the direct correspondences between categories of groupoid actions and categories of quantale modules are less well behaved, with only a functor from groupoid actions to quantale modules being readily available (rather than a full-fledged equivalence of categories), as was already observed in~\cite{GSQS}.

The aim of this paper is to recover the good behavior of actions, sheaves, and functoriality of \'etale groupoids by restricting to a smaller class of open groupoids that was introduced in~\cite{PR12}, which nevertheless is sufficiently large, at least for applications in analysis and differential geometry, because it includes many locally compact groupoids and, in particular, Lie groupoids. Such groupoids $G$ are equipped with good pseudogroups of local bisections, which in turn lead to a certain notion of cover $J:\cov G\to G$ by an \'etale groupoid, so here we call them \emph{\'etale-covered groupoids}. Roughly, it is the existence of \'etale covers that provides the good behavior of actions and sheaves for such groupoids, as we shall see. Dually to such groupoids there is the notion of \emph{inverse-embedded quantal frame}, consisting of an inverse quantal frame $\cov \opens$ with a suitable (usually non-multiplicative) embedding $j:\opens\to \cov\opens$ of a non-unital quantale. These had already appeared in~\cite{PR12}, but in the present paper we shall need to study them in more detail, in particular taking advantage of some notions and results from~\cite{QRnonuspp}.

The main results of this paper are those of section~\ref{section:actionsetalecoveredgprds}. This addresses actions, namely relating the actions of an \'etale-covered  groupoid $G$ to the modules of the quantale $\opens(\cov G)$ that in addition behave well with respect to the embedding $j:\opens(G)\to\opens(\cov G)$. This leads to an equivalence between the category of $G$-actions and the category of such $\opens(\cov G)$-modules, and extends the equivalence of categories that exists if $G$ is \'etale. In addition, this section contains two applications of these results. One is a description of $G$-sheaves in terms of $\opens(\cov G)$-modules that extends that of the \'etale case, whereby a $G$-sheaf $X$ is shown to correspond to an $\opens(\cov G)$-sheaf whose inner product $\langle-,-\rangle:X\times X\to\opens(\cov G)$ is valued in the image $j(\opens(G))$. This is a remarkably simple axiom that resembles a continuity condition. As a consequence, we are provided with a representation of the classifying topos of any \'etale-covered groupoid in terms of sheaves on its inverse-embedded quantal frame. The second application is an extension of the functoriality results of~\cite{Re15}, ultimately yielding a biequivalence between the bicategory of \'etale-covered groupoids and the bicategory of inverse-embedded quantal frames. Extensions of later functoriality results, namely those of~\cite{Funct2} regarding Hilsum--Skandalis maps and Morita equivalence, face additional difficulties and will not be addressed in this paper.
 
%%%%%%%%%%%%%%%%%%%%%%%%%%%%%%%%%%%%%
%%%%%%%%%%%%%%%%%%%%%%%%%%%%%%%%%%%%%
%%%%%%%%%%%%%%%%%%%%%%%%%%%%%%%%%%%%%
%%%%%%%%%%%%%%%%%%%%%%%%%%%%%%%%%%%%%

\section{Preliminaries}

The purpose of this section is to recall some concepts and to fix terminology and notation, mostly following~\cites{GSQS,QRnonuspp,Funct2,Re07}. For general references on sup-lattices, locales, quantales, or groupoids see~\cites{Rosenthal1,gamap2006,stonespaces,JT,EGHK18}.

%%%%%%%%%%%%%%%%%%%%%%%%%%%%%%%%%%%%%
%%%%%%%%%%%%%%%%%%%%%%%%%%%%%%%%%%%%%

\subsection{Groupoid quantales}

This section provides some background on groupoid quantales and their sheaves, based on \cite{Re07,GSQS,QRnonuspp}, which will be needed in this paper.

%%%%%%%%%%%%%%%%%%%%%%%%%%%%%%%%%%%%%

\paragraph{Open groupoids.}

The structure maps of a groupoid $G$ will be denoted as follows:
\[
\xymatrix{
G=\quad G_2\ar[r]^-m&G_1\ar@(ru,lu)[]_i\ar@<1.2ex>[rr]^r\ar@<-1.2ex>[rr]_d&&G_0.\ar[ll]|u
}
\]
Here $G_2$ is the pullback of the \emph{domain map} $d$ and the \emph{range map} $r$ in the category of locales $\Loc$. The groupoid is \emph{open} if $d$ is an open map, in which case $r$ and the \emph{multiplication map} $m$ are open, too. The \emph{inversion map} $i$ is always an isomorphism in $\Loc$. We shall denote the quantale of $G$ by $\opens(G)$, as in~\cite{Re07}. But we shall not make any distinction between a locale $X$ regarded as an object of $\Loc$ or as an object of $\Frm = \opp\Loc$. We say $G$ is \emph{\'etale} if $d$ is a local homeomorphism, in which case all the structure maps are local homeomorphisms.

%%%%%%%%%%%%%%%%%%%%%%%%%%%%%%%%%%%%%

\paragraph{Based quantales.}

Let $B$ be a locale. A \emph{$B$-$B$-bimodule} $M$ is a sup-lattice equipped with two unital (resp.\ left and right) $B$-module structures $B\times M\to M$ and $M\times B\to M$,
\[
(a,m)\mapsto \lres{a}{m}\quad \text{and}\quad (m,a)\mapsto \rres{m}{a},
\]
satisfying the following additional condition for all $a, b \in B$
and $m \in M$:
\begin{align}
  \rres{(\lres{a}{m})}{b} & = \lres{a}{(\rres{m}{b})}.
\end{align}
The notation $\lres{a}m$ for the left action is meant to convey the idea that $a$ restricts $m$ on the left, and analogously for the right action.

By a \emph{quantale based on $B$}, or a \emph{$B$-$B$-quantale}, will be meant a $B$-$B$-bimodule $Q$ equipped with a quantale multiplication $(x, y)\mapsto xy$ that satisfies the following additional conditions
for all $a \in B$ and $x, y \in Q$:
\begin{align}
 (\lres{a}{x})y & =  \lres{a}{(xy)}, \\
 (\rres{x}{a})y & =   x(\lres{a}{y}),  \\
 \rres{(xy)}{a} & =   x(\rres{y}{a}).
\end{align}

%%%%%%%%%%%%%%%%%%%%%%%%%%%%%%%%%%%%%

\paragraph{Involutive based quantales.}

A $B$-$B$-quantale is \emph{involutive} if it is an involutive semigroup; the involution is denoted by $a\mapsto a^{*}$ and it is required to satisfy, besides the standard conditions $x^{**}=x$ and $(xy)^{*}=y^{*}x^{*}$, the following two conditions:
\begin{align}
 (\bigvee_i x_i)^{*} & =\bigvee_i x^{*}_i, \\
 (\lres{a}{\rres{x}{b}})^{*} & = \lres{b}{\rres{x^{*}}{a}}.
\end{align}

%%%%%%%%%%%%%%%%%%%%%%%%%%%%%%%%%%%%%

\paragraph{Equivariant supports.}

An involutive $B$-$B$-quantale $Q$ is \emph{supported} if it is equipped with a sup-lattice homomorphism $\spp: Q\rightarrow B$ satisfying the following conditions for all $x,y\in Q$:
\begin{align}
\spp(1_Q)&=1_B,  \label{spp1}\\
\lres {\spp(x)}{y} &\leq xx^{*}y, \label{spp2}\\
\lres {\spp(x)}{x}&= x. \label{spp3}
\end{align}
A \emph{supported $B$-$B$-quantale} $(Q,\spp)$, also referred to as a \emph{supported quantale with base locale $B$}, is an involutive $B$-$B$-quantale equipped with a specified support $\spp$. The support $\spp$ is said to be \emph{equivariant} if for all $a\in B$ and $x\in Q$
\begin{equation}
\spp(\lres{a}{x})=a\wedge \spp(x).
\end{equation}
If a support is equivariant then it is the only possible support, and in this case the
$B$-$B$-quantale $(Q,\spp)$ is said to be \emph{equivariantly supported}.

Any equivariant support is necessarily \emph{stable}, by which it is meant that the following equivalent conditions hold:
\begin{itemize}
\item $\spp(xy)\le \spp(x)$ for all $x,y\in Q$;
\item $\spp(x1_Q)\le\spp(x)$ for all $x\in Q$;
\item $\spp(xy) = \spp(\rres{x}{\spp(y)})$ for all $x,y\in Q$.
\end{itemize}
An involutive $B$-$B$-quantale is \emph{stably supported} if it is equipped with a stable support.

If $Q$ is equivariantly supported with base locale $B$ then, writing $\rs(Q)$ for the set of right-sided elements of $Q$, where an element $a\in Q$ is \emph{right-sided} if
\begin{equation}\label{def:rs}
a1_Q\leq a,
\end{equation}
the map $B\to \rs(Q)$ defined by $x\mapsto \lres{x}{1_Q}$ is an order isomorphism whose inverse is the map $\rs(Q) \to B$ defined by $x\mapsto \spp(x)$. So $\rs(Q)\cong B$, and thus $\rs(Q)$ is a locale.

%%%%%%%%%%%%%%%%%%%%%%%%%%%%%%%%%%%%%

\paragraph{Based quantal frames.}

By a \emph{$B$-$B$-quantal frame} is meant a $B$-$B$-quantale $Q$ such that for all $q,m,m_i\in Q$ and $a\in B$ the following properties hold:
\begin{align}
q \wedge \bigvee_i m_i &= \bigvee_i q\wedge m_i,\\
(\lres{a}{q})\wedge m &= \lres{a}{(q\wedge m)},\\
m\wedge (\rres{q}{a}) &= \rres{(q\wedge m)}{a}.
\end{align}

%%%%%%%%%%%%%%%%%%%%%%%%%%%%%%%%%%%%%

\paragraph{Reflexive quantal frames.}

By a \emph{reflexive quantal frame} $(Q,\upsilon)$ is meant a $B$-$B$-quantal frame equipped with a frame homomorphism $\upsilon: Q\to B$ such that for all $a\in B$
\begin{equation}
\upsilon(\lres{a}{1_Q})=a=\upsilon(\rres{1_Q}{a}).
\end{equation} 

%%%%%%%%%%%%%%%%%%%%%%%%%%%%%%%%%%%%%

\paragraph{Multiplicative quantal frames.}

Let $Q$ be a $B$-$B$-quantal frame.
The quantale multiplication has the following factorization in the category of sup-lattices:
\[
\vcenter{\xymatrix{
 Q\otimes Q \ar@{->>}[d]_{\pi}  \ar[dr] \\
 Q\otimes_{B} Q  \ar[r]_-{\mu} & Q.
}
}
\]
We refer to $\mu$ as the \emph{reduced multiplication} of $Q$, and to its right adjoint $\mu^*$ as the \emph{reduced comultiplication}. Then $Q$ is said to be a \emph{multiplicative quantal frame} if the reduced comultiplication preserves joins (and therefore is a homomorphism of locales).

%%%%%%%%%%%%%%%%%%%%%%%%%%%%%%%%%%%%%

\paragraph{Quantales of open groupoids.}

Let $(Q,\spp,\upsilon)$ be a multiplicative equivariantly supported reflexive quantal frame. We say that $Q$ satisfies \emph{unit laws} if moreover the following condition holds for all $a\in Q$:
\begin{equation}\label{unitlaws}
\bigvee_{xy\leq a} (\lres{\upsilon(x)}{y}) = a.
\end{equation}
By a \emph{groupoid quantale} will be meant a multiplicative equivariantly supported reflexive quantal frame $Q$ that satisfies unit laws and moreover satisfies the following condition, which is referred to as the \emph{inverse law}, for all $a\in Q$:
\begin{equation}\label{inverselaws}
\lres{\upsilon(a)}{1_Q} = \bigvee_{xx^*\leq a} x.
\end{equation}
The groupoid quantales are precisely the quantales $Q\cong \opens(G)$ for an open groupoid $G$.

%%%%%%%%%%%%%%%%%%%%%%%%%%%%%%%%%%%%%

\paragraph{Inverse quantal frames.}

Let $Q$ be an equivariantly supported reflexive $B$-$B$-quantal frame. If moreover $Q$ is a unital quantale and it satisfies the inverse law then $Q$ is necessarily multiplicative and it satisfies the unit laws. In this case $Q$ is an inverse quantal frame, in other words a quantale $Q\cong\opens(G)$ for an \'etale groupoid $G$. Among other things, we have
\begin{equation}
\bigvee \ipi(Q)= 1_Q,
\end{equation}
where $\ipi(Q) = \{a \in Q \st a^*a\vee a^*a\leq e\}$ is the set of partial units of $Q$, and there is an order isomorphism $\iota:B\to\downsegment(e)$ that transforms the $B$-actions into multiplication:
\[
\lres{a}x = \iota(a)x\quad\quad\textrm{and}\quad\quad\rres{x}a = x\iota(a).
\]
For an arbitrary inverse quantal frame $Q$ we shall usually refer to the locale $\downsegment(e)$ as the \emph{base locale}.

%%%%%%%%%%%%%%%%%%%%%%%%%%%%%%%%%%%%%
%%%%%%%%%%%%%%%%%%%%%%%%%%%%%%%%%%%%%

\subsection{Actions and sheaves}

\paragraph{Actions of open groupoids.}

Let $G$ be an open groupoid. A \emph{left $G$-action} is a triple $(X,p,\act)$ where $X$ is a locale, $p: X\to G_0$ (called \emph{projection} or \emph{anchor map}) is a map of locales, and $\act: G_1\times_{G_0} X \to X$ is a map of locales (called \emph{the action}) that satisfies the usual axioms (see, \eg, \cite{GSQS}). One defines \emph{right $G$-actions} in a similar way. We shall denote $(X,p,\act)$ by $X$ when no confusion will arise. The category of $G$-locales and equivariant maps between them is denoted by $G$-$\Loc$.  The categories of left $G$-locales and right $G$-locales are isomorphic. 

We shall denote by $X$ the $\opens(G)$-module which is obtained from a $G$-locale $X$. Let us briefly recall the construction of this module. Taking into account that in $\Frm$ the locale $G_1\times_{G_0} X$ is a quotient
$\opens(G_1)\otimes_{\opens(G_0)} X$ of the tensor product
$\opens(G_1)\otimes X$, the module action is the sup-lattice homomorphism which is obtained as the following composition:
\[
\vcenter{\xymatrix{
 \opens(G_1)\otimes X   \ar@{->>}[r]  & \opens(G_1)\otimes_{\opens(G_0)} X \ar[r]^-{\act_!} & X.
 }
}
\]
The inverse image homomorphism $\act^*$ is the right adjoint of $\act_!$, and thus it is given by
\begin{eqnarray}
\act^*(x) &=& \V \{ a\otimes y\st ay\leq x   \}. \label{eq:rightadjformula1}
\end{eqnarray}
Moreover, when $G$ is an \'etale groupoid, we have the following useful formula:
\begin{eqnarray}
\act^*(x) &=& \V_{s\in \ipi(Q)} s\otimes s^*x. \label{eq:rightadjformula2}
\end{eqnarray}
The latter shows that $\act^*$ preserves arbitrary joins.  

%%%%%%%%%%%%%%%%%%%%%%%%%%%%%%%%%%%%%

\paragraph{Sheaves.}

Let $G$ be an \'etale groupoid. A \emph{$G$-sheaf} is a $G$-locale whose projection is a local homeomorphism. The full subcategory of $G$-$\Loc$ whose objects are the $G$-sheaves (the classifying topos of $G$) is usually denoted by $BG$. The isomorphism between $G$-$\Loc$ and $\opens(G)$-$\Loc$ (see \cite{GSQS}*{Th.\ 3.21}) yields, by restriction, a corresponding full subcategory of \emph{$\opens(G)$-sheaves}. Concretely, letting $Q$ be an arbitrary inverse quantal frame, a \emph{$Q$-sheaf} is a left $Q$-locale $X$ whose action restricted to the locale $B:=\downsegment(e)$ defines a $B$-sheaf. The full subcategory of $Q$-$\Loc$ whose objects are the $Q$-sheaves is denoted by $Q$-$\LH$.

Let $X$ be a $Q$-sheaf. The \emph{local sections} of $X$ are the local sections of
$X$ regarded as a $B$-sheaf; that is, a local section is an element $s\in X$
satisfying $\spp_X(x)s=x$ for all $x\le s$.
The set of local sections of $X$ is denoted by $\sections_X$.

Let $Q$ be an inverse quantal frame, and let $X$ and $Y$ be $Q$-sheaves. A \emph{sheaf homomorphism} $f:X\to Y$ is a left $Q$-module homomorphism satisfying the following two conditions:
\begin{enumerate}
\item $\spp_Y(f(x))=\spp_X(x)$ for all $x\in X$;
\item $f(\sections_X)\subset\sections_Y$.
\end{enumerate}
The sheaf homomorphisms coincide with the direct image homomorphisms of the morphisms in $Q$-$\LH$. Therefore the category of $Q$-sheaves and sheaf homomorphisms between them, which we shall denote by $Q$-$\Sh$, is isomorphic to $Q$-$\LH$.

%%%%%%%%%%%%%%%%%%%%%%%%%%%%%%%%%%%%%

\paragraph{Hilbert modules.}

Let $Q$ be an involutive quantale. By a \emph{pre-Hilbert $Q$-module} will be meant a left $Q$-module $X$ equipped with a binary operation $\langle -,-\rangle: X\times X\to Q$,
called the \emph{inner product}, which for all $x, x_i, y \in X$ and $a \in Q$ satisfies the
following axioms:
\begin{align}
\langle ax,y\rangle &= a\langle x,y\rangle\\
\bigl\langle\V_i x_i,y \bigr\rangle &= \V_i \langle x_i,y \rangle\\
\langle x,y\rangle &=\langle y,x \rangle^*.
\end{align}

By a \emph{Hilbert $Q$-module} will be meant a pre-Hilbert $Q$-module whose inner
product is non-degenerate:
\begin{align}
\langle x,-\rangle = \langle y,-\rangle \Rightarrow x=y.
\end{align}
In particular, inner products are sesquilinear forms.

Any set $\sections \subset X$ such that $x=\V_{t\in \sections} \langle x,t \rangle t$ for all $x \in X$ is called a \emph{Hilbert basis}. If $X$ has a Hilbert basis we say that the Hilbert module is \emph{complete}.
By a \emph{Hilbert section} of $X$ is meant an element $s \in X$ such that $\langle x, s\rangle s \leq x$ for all $x \in X$. In particular, any element of a Hilbert basis is a Hilbert section.

We recall the Hilbert module characterization of quantale sheaves for inverse quantal frames (see~\cite{GSQS}*{Th.\ 4.47, Th.\ 4.55}).

\begin{theorem}\label{theo: Hilbertisosheaves}
For any inverse quantal frame $Q$, complete Hilbert $Q$-modules and $Q$-sheaves amount to the same thing, and the local sections of a $Q$-sheaf coincide with the Hilbert sections.
\end{theorem}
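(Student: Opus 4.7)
The plan is to construct, in each direction, explicit translations between a $Q$-sheaf structure and a complete Hilbert $Q$-module structure on a given left $Q$-module $X$, and then to verify that the two constructions are mutually inverse and that they identify local sections with Hilbert sections.

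Starting with a $Q$-sheaf $X$, with base $B=\downsegment(e)$, support $\spp_X:X\to B$, and local sections $\sections_X$, I would define the inner product as the right adjoint to the left action,
$$\langle x,y\rangle = \V\{a\in Q\mid ay\leq x\}.$$
Sup-lattice linearity in the first argument is automatic from the formula; the identity $\langle ax,y\rangle = a\langle x,y\rangle$ uses associativity of the $Q$-action together with stability of the tensor/module structure; and conjugate symmetry $\langle x,y\rangle = \langle y,x\rangle^{*}$ reduces, via $\V\ipi(Q)=1_Q$, to checking on partial units the equivalence $sy\leq x\Leftrightarrow s^{*}x\leq y$. This equivalence is the adjunction expressing that $s$ and $s^{*}$ act as mutually inverse partial bijections on the underlying $B$-sheaf, and it is where the inverse-quantal-frame hypothesis is essential. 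Non-degeneracy will follow once the local sections are shown to form a Hilbert basis.

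For the Hilbert basis and Hilbert section identification I would argue as follows. From the very definition, $\langle x,s\rangle s = \V\{as\mid as\leq x\}\leq x$ for any $s\in X$, so every local section of the $Q$-sheaf is automatically a Hilbert section. Conversely, if $s$ is a Hilbert section and $x\leq s$, then the order isomorphism $\iota:B\to\downsegment(e)$ yields $\spp_X(x)\leq \langle x,s\rangle$ (because $\iota(\spp_X(x))\cdot s = x\leq x$), and the Hilbert section inequality gives $\spp_X(x)s\leq x$; together with the reverse inequality from the support axioms this gives $\spp_X(x)s = x$, so $s$ is a local section. The Hilbert basis identity $x=\V_{s\in\sections_X}\langle x,s\rangle s$ is then precisely the sheaf property expressed in terms of the inner product.

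Conversely, starting from a complete Hilbert $Q$-module $(X,\langle-,-\rangle)$ with a Hilbert basis $\sections$, I would recover the sheaf by setting $\spp_X(x) = \iota^{-1}(\langle x,x\rangle\wedge e)$ and verifying that this makes $X|_B$ a $B$-sheaf with local sections $\sections$, the ambient $Q$-action extending it to a $Q$-sheaf. Mutual inversion is then a matter of rechecking that the inner product built from the recovered sheaf agrees with the original and vice versa. The main obstacle, as I see it, is the careful handling of conjugate symmetry and the identification of local with Hilbert sections: both hinge on the interplay between the involution, the partial units $\ipi(Q)$, and the base locale $B\cong\downsegment(e)$ that is specific to inverse quantal frames. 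Once this compatibility is in place, the remaining verifications are routine sup-lattice and adjunction bookkeeping.
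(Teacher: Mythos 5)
There is a genuine gap, and it is located at the very first step: your formula for the inner product is not the sheaf inner product. You set $\langle x,y\rangle=\V\{a\in Q\mid ay\leq x\}$, which is the residuation (the right adjoint of $a\mapsto ay$ evaluated at $x$). As a right adjoint it preserves meets, not joins, in $x$, so the axiom $\langle\V_i x_i,y\rangle=\V_i\langle x_i,y\rangle$ is \emph{not} ``automatic from the formula''; it fails, as does $\langle ax,y\rangle=a\langle x,y\rangle$ (take $a=0$: the left side is $\V\{b\mid by=0\}$, which need not be $0$). A concrete counterexample: let $Q=\{0,e\}$ be the quantale of the one-point groupoid and $X=\opens(S)$ for a two-point set $S$, with $x=\{1\}$ and $y=\{1,2\}$; your formula gives $\langle x,y\rangle=0$, whereas the correct inner product is $e$ because $x\wedge y\neq 0$. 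The correct formula, recorded in Theorem~\ref{theorem:innerproductformula}, is $\langle x,y\rangle=\V_{u\in\ipi(Q)}u\,\spp_X(u^*x\wedge y)$, i.e.\ the direct image under $\pi_1$ of $\act^*(x)\wedge\pi_2^*(y)$: it detects the arrows $u$ for which $uy$ \emph{meets} $x$, not those for which $uy\leq x$. The error propagates: with your formula $\langle x,s\rangle s\leq x$ holds for \emph{every} $s\in X$, so every element would be a Hilbert section, contradicting the very identification of Hilbert sections with local sections that the theorem asserts. With the correct formula, $\langle x,s\rangle s\leq x$ is a genuine computation using $\spp_X(u^*x\wedge s)s=u^*x\wedge s$ for $s$ a local section and $uu^*\leq e$.

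For the record, the paper itself does not prove this statement; it is recalled from the reference [GSQS, Th.\ 4.47, Th.\ 4.55], so there is no in-paper argument to compare against. Your overall architecture (two mutually inverse constructions; recover $\spp_X(x)$ as $\langle x,x\rangle\wedge e$ in the converse direction; reduce non-degeneracy to the Hilbert-basis identity over $\sections_X$) is the right shape and matches the cited development, but the forward construction must start from the correct inner product; once it does, conjugate symmetry follows from $u\,\spp_X(u^*x\wedge y)=\spp_X(x\wedge uy)\,u$ for partial units $u$, rather than from the containment equivalence $sy\leq x\Leftrightarrow s^*x\leq y$ you invoke.
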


The following theorem gives a useful formula for computing the inner products of quantale sheaves for inverse quantal frames (see~\cite{Funct2}*{Th.\ 3.6}).

\begin{theorem}\label{theorem:innerproductformula}
Let $Q$ be an inverse quantal frame and $X$ be a $Q$-sheaf. Then 
\begin{equation}\label{preliminaries, eq: innerproduct}
\langle x,y\rangle=\V_{u\in Q_{\mathcal{I}}} u \spp_X(u^*x \wedge y),
\end{equation}
for all $x,y\in X$.
\end{theorem}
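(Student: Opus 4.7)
My plan is to use Theorem~\ref{theo: Hilbertisosheaves} to view $X$ as a complete Hilbert $Q$-module and fix a Hilbert basis $\Gamma\subseteq X$ of local sections. Writing
\[
I(x,y):=\bigvee_{u\in\ipi(Q)} u\,\spp_X(u^*x\wedge y),
\]
the strategy is first to show that $I$ agrees with $\langle-,-\rangle$ on pairs of basis elements, and then to bootstrap to arbitrary $x,y\in X$ via sup-continuity and the module structure.

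The reduction to basis elements proceeds by verifying that $I$ preserves arbitrary joins in each variable: $u^*(-)$ is a sup-lattice homomorphism, binary meets distribute over joins both in the $B$-$B$-quantal frame $Q$ and in the underlying $B$-sheaf of $X$, and $\spp_X$ is sup-preserving. Together with the corresponding property for $\langle-,-\rangle$, this reduces the formula to showing $I(s,t)=\langle s,t\rangle$ whenever $s,t\in\Gamma$. For such $s,t$ one has $\langle s,t\rangle\in\ipi(Q)$ and $\langle s,s\rangle=\spp_X(s)$ (standard identities for local sections of $Q$-sheaves). The inequality $\langle s,t\rangle\le I(s,t)$ is attained already by the single summand $u=\langle s,t\rangle$: the Hilbert-section property applied to $s$ gives $u^*s=\langle t,s\rangle s\le t$, whence $u^*s\wedge t=u^*s$ and $\spp_X(u^*s)=u^*u$; since partial units satisfy $uu^*u=u$, one gets $u\,\spp_X(u^*s)=u=\langle s,t\rangle$. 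Conversely, setting $z=u^*s\wedge t$, the inclusion $z\le t$ yields $\langle z,t\rangle=\spp_X(z)$, so $u\,\spp_X(z)=u\langle z,t\rangle=\langle uz,t\rangle\le\langle s,t\rangle$ because $uz\le uu^*s\le s$ and $\langle-,t\rangle$ is monotone; taking the join over $u$ gives $I(s,t)\le\langle s,t\rangle$.

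The remaining work is to extend the basis-level equality to arbitrary $x,y\in X$. Because the Hilbert basis expansion $x=\bigvee_{s\in\Gamma}\langle x,s\rangle s$ has quantale-valued coefficients, one cannot rely on join-preservation alone; rather, one checks the module identities $I(ax,y)=a\,I(x,y)$ and $I(x,ay)=I(x,y)a^*$ for $a\in Q$, after which the standard expansion $\langle x,y\rangle=\bigvee_{s,t\in\Gamma}\langle x,s\rangle\langle s,t\rangle\langle t,y\rangle$ matches the analogous one for $I$. The main obstacle is verifying these module identities for $I$: the left action $u^*(-)$ interacts with a general $a\in Q$ only via the decomposition $a=\bigvee\{v\in\ipi(Q)\mid v\le a\}$, and the resulting reindexing of the join over partial units must be controlled using axioms (\ref{spp2}) and (\ref{spp3}) together with the closure of $\ipi(Q)$ under products.
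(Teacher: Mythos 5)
Your basis-level computation is correct and essentially complete: for Hilbert sections $s,t$ the single witness $u=\langle s,t\rangle$ does give $\langle s,t\rangle\le I(s,t)$ (using $\langle t,s\rangle\spp_X(s)=\langle t,s\rangle$ to get $\spp_X(u^*s)=u^*u$ and then $uu^*u=u$), and the reverse inequality via $z=u^*s\wedge t$, $\langle z,t\rangle=\spp_X(z)$, $u\spp_X(z)=\langle uz,t\rangle\le\langle s,t\rangle$ is sound. The genuine gap is precisely the step you yourself label ``the main obstacle'': the identities $I(ax,y)=a\,I(x,y)$ and $I(x,ay)=I(x,y)a^*$ are announced but never established, and without them the bootstrap from $\Gamma\times\Gamma$ to arbitrary $x,y$ does not go through, because the coefficients $\langle x,s\rangle$ in the Hilbert expansion are genuinely quantale-valued. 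These identities are true and provable, but not by a formal reindexing of the join over $\ipi(Q)$. After reducing to $a=v\in\ipi(Q)$ by sup-continuity, one direction requires showing $u\,\spp_X(u^*vx\wedge y)\le v\bigl(w\,\spp_X(w^*x\wedge y)\bigr)$ with $w=v^*u$, which follows from $\spp_X(u^*vx\wedge y)\le\spp_Q(u^*v)=u^*vv^*u$ and $uu^*\le e$; the other direction requires the identity $vw\,\spp_X(w^*x\wedge y)=vw\,\spp_X(w^*v^*vx\wedge y)$, which needs $vw=v(v^*v)w$, the stability law $\spp_X(bm)=b\wedge\spp_X(m)$, the fact that $b\in B$ acts on the frame $X$ by meet (so that $b(m\wedge m')=bm\wedge m'$), and $w^*(v^*v)ww^*x=w^*(v^*v)x$. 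None of this is routine enough to leave implicit; as written, the proposal is an outline with its central lemma missing.

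It is also worth noting that your route differs from the one the paper relies on. Theorem~\ref{theorem:innerproductformula} is quoted from elsewhere, and both there and in the proof of Theorem~\ref{qo-sheaves} the formula arises geometrically: one identifies $\langle x,y\rangle$ with $(\pi_1)_!\bigl([\act^*,\pi_2^*](x\otimes y)\bigr)$ for the pairing map $\langle\act,\pi_2\rangle$, and then the right-hand side of \eqref{preliminaries, eq: innerproduct} drops out of $\act^*(x)=\V_{u\in\ipi(Q)}u\otimes u^*x$ together with $(\pi_1)_!(a\otimes x)=a\,\spp_X(x)$, with no need to verify sesquilinearity of $I$ by hand. Your purely algebraic Hilbert-basis argument is a legitimate alternative and would be self-contained once the module identities above are supplied, but in its current form it defers exactly the part of the work that the localic argument is designed to avoid.
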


%%%%%%%%%%%%%%%%%%%%%%%%%%%%%%%%%%%%%

\paragraph{Supported modules.}

Let $(Q,\spp_Q)$ be a unital supported quantale, and denote by $B$ the locale $\downsegment(e)$. By a \emph{supported $Q$-module} is meant a pre-Hilbert $Q$-module $X$ equipped with a monotone map $\spp_X: X\to B$, called the \emph{support} of $X$, such that the following properties hold for all $x\in X$:
\begin{align}
\spp_X(x) &\leq \langle x, x \rangle \\
x &\leq \spp_X(x)x.
\end{align}
Note that any supported quantale $Q$ defines a supported module over
itself, with $ \langle a, b\rangle = ab^*$. The support is called \emph{stable} if in addition one of the following equivalent conditions holds for all $b \in B$ and $x \in X$:
\begin{align}
\spp_X(bx) &=b\wedge \spp_X(x)\\
\spp_X(bx) &= \spp_Q(b\spp_X(x))\\
\spp_X(bx) &\leq \spp_Q(b).
\end{align}
Moreover, if $(Q,\spp_Q,e)$ is a stably supported quantale then any supported $Q$-module $X$ is necessarily stably supported and the following properties hold for all $x,y\in X$ and $a\in Q$:
\begin{align}
\spp_Q(\langle x, y\rangle) &\leq \spp_X(x) = \spp_Q(\langle x, x\rangle) = \spp_Q(\langle x, 1_X\rangle)\\
\spp_X(x)a  &= \langle x, 1_X \rangle \wedge a\\
\spp_X(x) &= \langle x, 1_X\rangle \wedge e = \langle x, x\rangle \wedge e.
\end{align}
Therefore, for any stably supported quantale $(Q,\spp_Q,e)$, any complete Hilbert $Q$-module is a (necessarily stably) supported $Q$-module.

%%%%%%%%%%%%%%%%%%%%%%%%%%%%%%%%%%%%%
%%%%%%%%%%%%%%%%%%%%%%%%%%%%%%%%%%%%%
%%%%%%%%%%%%%%%%%%%%%%%%%%%%%%%%%%%%%
%%%%%%%%%%%%%%%%%%%%%%%%%%%%%%%%%%%%%

\section{\'Etale covers}\label{funct3, sec: inverseembedded}

Here we introduce the main definitions of this paper, namely inverse-embedded quantales and their \'etale-covered groupoids.

\subsection{Local bisections}

We begin by recalling (and adapting to the setting of $B$-$B$-quantales) the notion of local bisection of \cite{PR12} for open quantal frames, which generalizes the corresponding notion for groupoids.

Let $\opens$ be a groupoid quantale with base locale $B$. By a \emph{local bisection} of $\opens$ will be meant a pair $\sigma=(U,s)$ where $U\in B$ and
\begin{equation}\label{def:bisection}
s: \widetilde U\to \opens\quad\quad \text{(with $\widetilde U:= \downsegment U\cap B$)}
\end{equation}
is a map of locales such that
\begin{enumerate}
\item $d\circ s=k_U$, where $k_U:\widetilde U\to B$ is the inclusion of the open sublocale $\widetilde U$ into $B$ ($s$ is a local section of $d$),
\item and $r\circ s$ is an open regular monomorphism of locales.
\end{enumerate}

The notion of local bisection for an open quantal frame $\opens$, along with a corresponding action of the local bisections on $\opens$, is used in~\cite{PR12} in order to define a weak form of multiplicativity which ensures that the set of local bisections has the structure of a pseudogroup $\sections(\opens)$. Then sufficient (but not necessary) conditions that ensure multiplicativity are studied. These conditions concern the extent to which $\opens$ can be embedded into the inverse quantal frame $\lcc(\sections(\opens))$. Such conditions, applied to the quantale $\opens(G)$ of an open groupoid, imply the existence of a surjective functor of groupoids $J : \cov G\to G$ that provides a notion of canonical ``\'etale cover" of $G$. 

These results provided the inspiration for the work in the present section. In particular Definition~\ref{def:ieqf} will be seen to include conditions that are not found in \cite{PR12} but are necessary in order to obtain groupoids from such quantale embeddings.

%%%%%%%%%%%%%%%%%%%%%%%%%%%%%%%%%%%%%
%%%%%%%%%%%%%%%%%%%%%%%%%%%%%%%%%%%%%

\subsection{Inverse-embedded quantal frames}

\begin{definition}\label{funct3, def: QQquantale}
Let $Q$ be an inverse quantal frame. By an involutive \emph{$Q$-$Q$-quantale} $\opens$ is meant a $Q$-$Q$-bimodule, whose left and right actions are denoted by $(a,x)\mapsto a\cdot x$ and $(x,a)\mapsto x\cdot a$, respectively, equipped with a quantale multiplication $(x, y)\mapsto xy$ that satisfies the following additional conditions
for all $a \in Q$ and $x, y \in \opens$,
\begin{align}
 (a\cdot x)y & =  a\cdot (xy),\label{QQquantale1}\\
 (x\cdot a)y & =   x(a\cdot y),\label{QQquantale2}\\
 (xy)\cdot a & =   x(y\cdot a),\label{QQquantale3}
\end{align}
and moreover is endowed with an involution $x\mapsto x^*$, by which is meant a map such that for all $x\in \opens$ we have 
\[
x^{**}=x\quad \text{and}\quad (xy)^{*}=y^{*}x^{*}
\]
plus the following two conditions:
\begin{align}
 (\V_i x_i)^{*} & =\V_i x^{*}_i,& \text{($x_i\in \opens$)} \\
 (a\cdot x \cdot b)^{*} & = b^*\cdot x^* \cdot a^*.& \text{($a,b\in Q$ and $x\in \opens$)}
\end{align}
 \end{definition}
 
 \begin{remark}\label{remark:ieqBBquantale}
Let $Q$ be an inverse quantal frame with base locale $B$. Of course, any involutive $Q$-$Q$-quantale is also an involutive $B$-$B$-quantale.
  \end{remark}

\begin{definition}\label{def:ieqf}
By an \emph{inverse-embedded quantal frame} is meant an involutive quantal frame $\opens$ (with reduced multiplication $\mu$) equipped with 
\begin{enumerate}
\item an inverse quantal frame $\cov\opens$ (with base locale $B$ and reduced multiplication $\cov\mu$), 
\item a structure of involutive $\cov\opens$-$\cov\opens$-quantale, and
\item a frame monomorphism $j:\opens\to \cov\opens$ which is a homomorphism of $\cov\opens$-$\cov\opens$-bimodules, such that
\begin{enumerate}
\item\label{def:ieq1} $j(a^*)=j(a)^*$ for all $a\in \opens$, that is, $j$ preserves the involution, 
\item\label{def:ieq2} $\xymatrix{
\opens\otimes_{B}\opens \ar[r]^-{j\otimes \ident} &\cov \opens\otimes_{B} \opens
}$   is also a monomorphism of frames,
\item\label{def:ieq3} $\cov \mu^*\circ j=(j\otimes j)\circ \mu^*$, that is, $j$ preserves the reduced comultiplications, 
\item\label{def:ieq4} $(j(a)\wedge e)1_{\cov\opens}\le \V\limits_{xx^*\leq a} j(x)$ for all $a\in \opens$, and
\item\label{def:ieq0} $\rs(\cov\opens)\subset j(\opens)$, that is, $j$ is ``right-sided surjective'' [\cf\ \eqref{def:rs}].
\end{enumerate}
\end{enumerate}
\end{definition}

\begin{example}
\begin{enumerate}
\item Let $\opens$ be a weakly multiplicative quantale in the sense of \cite{PR12}. Then $\opens$ is an inverse-embedded quantal frame with inverse quantal frame $\cov\opens=\lcc(\sections(\opens))$ and a frame monomorphism $j$ given by 
\[
j(q)=\V \{ \sigma\in \sections(\opens(G))  \st\ s^*(q)=U \},
\]
for all $q\in \opens(G)$ (see \cite{PR12}*{Lemma 5.9, Lemma 5.13}). 
\item Let $X$ be a locally compact topological space. Then the topology $\topology(\intg X)$ of the pair groupoid of $X$ is an inverse-embedded quantal frame with inverse quantal frame $\topology(\Germs(\intg X))$ (see \cite{MaRe10}*{Th.\ 2.8}) and a frame monomorphism $j=k^{-1}$ where $k$ is given by $k((x,\germ_x s))=s(x)$ (see \cite{PR12}*{Th.\ 5.31}).
\end{enumerate}
\end{example}

\begin{lemma}\label{lemma:jpreserveslaxmu}
Let $\opens$ be an inverse-embedded quantal frame. Then, for all $x,y\in \opens$, we have 
\[
\cov\mu(j(x)\otimes j(y))\le j(\mu(x\otimes y)).
\]
\end{lemma}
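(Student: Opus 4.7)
The plan is to use adjunction and condition \eqref{def:ieq3} to convert the claimed inequality into something that follows immediately from the unit of the adjunction $\mu \dashv \mu^*$.

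First, I would note that by the adjunction $\cov\mu \dashv \cov\mu^*$ between the reduced multiplication and comultiplication in $\cov\opens$, the inequality
\[
\cov\mu(j(x)\otimes j(y)) \le j(\mu(x\otimes y))
\]
is equivalent to
\[
j(x)\otimes j(y) \le \cov\mu^*\bigl(j(\mu(x\otimes y))\bigr),
\]
where the tensor is over $B$ and the right-hand side lives in $\cov\opens\otimes_B\cov\opens$. This reformulation is the key reduction, because it brings $\cov\mu^*\circ j$ into play, and by condition \eqref{def:ieq3} of Definition~\ref{def:ieqf} we have $\cov\mu^*\circ j = (j\otimes j)\circ\mu^*$. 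Applying this to $\mu(x\otimes y)\in\opens$ transforms the right-hand side into $(j\otimes j)\bigl(\mu^*(\mu(x\otimes y))\bigr)$.

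The remaining inequality
\[
j(x)\otimes j(y) \le (j\otimes j)\bigl(\mu^*(\mu(x\otimes y))\bigr)
\]
now follows routinely: from the unit of the adjunction $\mu\dashv\mu^*$ in $\opens$ we have $x\otimes y \le \mu^*(\mu(x\otimes y))$ in $\opens\otimes_B\opens$, and applying the monotone map $j\otimes j$ preserves this inequality, yielding $j(x)\otimes j(y) = (j\otimes j)(x\otimes y) \le (j\otimes j)\bigl(\mu^*(\mu(x\otimes y))\bigr)$.

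The only potential subtlety is ensuring that $j\otimes j:\opens\otimes_B\opens\to\cov\opens\otimes_B\cov\opens$ is well defined as a sup-lattice map between tensor products over $B$; this is guaranteed because $j$ is by assumption a $\cov\opens$-$\cov\opens$-bimodule homomorphism, and hence in particular a $B$-$B$-bimodule homomorphism via Remark~\ref{remark:ieqBBquantale}. No additional ingredient (involution, the right-sided surjectivity condition \eqref{def:ieq0}, or \eqref{def:ieq4}) is needed for this lemma; only properties (\ref{def:ieq3}) and the bimodule structure play a role.
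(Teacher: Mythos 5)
Your proof is correct and uses essentially the same ingredients as the paper's own argument: the unit inequality $\mu^*\circ\mu\geq\ident$, condition~\eqref{def:ieq3} of Definition~\ref{def:ieqf}, and the counit $\cov\mu\circ\cov\mu^*\leq\ident$ (which you have folded into the adjunction transposition). The paper simply writes the same chain of (in)equalities directly, without first transposing across the adjunction, so this is the same proof in a mildly reorganized form.
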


\begin{proof}
The following sequence of (in)equalities for all $x,y\in \opens$ will give us the desired result:
\begin{align*}
\cov\mu(j(x)\otimes j(y)) &= \cov\mu((j\otimes j)(x\otimes y))\\
&\le \cov\mu((j\otimes j)(\mu^*(\mu(x\otimes y))))& \text{(because $\mu^*\circ\mu\geq \ident$)}\\
&= \cov\mu(\cov\mu^*(j(\mu(x\otimes y))))& \text{[due to Definition~\ref{def:ieqf}\eqref{def:ieq3}]}\\
&\le  j(\mu(x\otimes y))& \text{(because $\cov\mu\circ\cov\mu^*\leq \ident$)}. \qed
\end{align*} 
\end{proof}

For the sake of simplicity, we shall write $j(x)j(y)\leq j(xy)$ rather than $\cov\mu(j(x)\otimes j(y))\le j(\mu(x\otimes y))$ whenever no confusion may arise.

\begin{lemma}\label{lemma:propertiesj}
Let $\opens$ be an inverse-embedded quantal frame. Then, for all $x,y\in \opens$, we have:
\begin{align}
j(x1_{\opens})1_{\cov\opens} &= j(x1_{\opens})&  \text{[\ie, $j(x1_{\opens})\in \rs(\cov\opens)$]} \label{eq:jprop1}\\
j(x)1_{\cov\opens} &\le j(x1_{\opens})\label{eq:jprop2}\\
j(x1_{\opens})\cdot 1_{\opens}&= x1_{\opens}\label{eq:jprop3}\\
j(x)\cdot y &\le xy \label{eq:jprop4}\\
\cov\mu^*(j(q))&= \V_{\substack{u,v\in \ipi( \cov \opens) \\ v\leq j(q)}} u\otimes u^*v \label{funct3, eq: multiplicativityopens}
\end{align} 
\end{lemma}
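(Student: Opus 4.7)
The plan is to prove the five properties in the order \eqref{eq:jprop2}, \eqref{eq:jprop1}, \eqref{eq:jprop4}, \eqref{eq:jprop3}, \eqref{funct3, eq: multiplicativityopens}, which respects their logical dependencies. Everything rests on one preliminary observation: since $1_{\cov\opens}$ is trivially right-sided in $\cov\opens$, condition~\eqref{def:ieq0} of Definition~\ref{def:ieqf} yields $1_{\cov\opens}=j(z)$ for some $z\in\opens$ with $z\le 1_\opens$, whence
\[
1_{\cov\opens}\le j(1_\opens).
\]

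For \eqref{eq:jprop2} we simply chain this with Lemma~\ref{lemma:jpreserveslaxmu}: $j(x)1_{\cov\opens}\le j(x)j(1_\opens)\le j(x1_\opens)$. Identity~\eqref{eq:jprop1} is then essentially a corollary: the direction $\ge$ uses only $e\le 1_{\cov\opens}$, while $\le$ comes from applying \eqref{eq:jprop2} with $x$ replaced by $x1_\opens$ and noting $x1_\opens 1_\opens\le x1_\opens$.

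The core idea for \eqref{eq:jprop4} is to apply $j$ to the desired inequality and exploit that $j$ is a frame monomorphism, hence order-reflecting (any injective meet-preserving map between frames has this property, by the identity $j(a)=j(a)\wedge j(b)=j(a\wedge b)$). Using that $j$ is a $\cov\opens$-$\cov\opens$-bimodule homomorphism one obtains $j(j(x)\cdot y)=j(x)\cdot j(y)=j(x)j(y)$, which is bounded by $j(xy)$ via Lemma~\ref{lemma:jpreserveslaxmu}; order reflection then delivers $j(x)\cdot y\le xy$. Identity~\eqref{eq:jprop3} splits accordingly: the direction $\le$ is \eqref{eq:jprop4} composed with $(x1_\opens)1_\opens\le x1_\opens$, and for $\ge$ the same order-reflection trick is applied to $j(x1_\opens)=j(x1_\opens)e\le j(x1_\opens)j(1_\opens)=j(j(x1_\opens)\cdot 1_\opens)$, which invokes $e\le j(1_\opens)$ once more.

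Finally, for \eqref{funct3, eq: multiplicativityopens}, frame distributivity applied to $\bigvee\ipi(\cov\opens)=1_{\cov\opens}$, together with the downward closure of $\ipi(\cov\opens)$ in $\cov\opens$, gives $j(q)=\V\{v\in\ipi(\cov\opens)\st v\le j(q)\}$. Multiplicativity of $\cov\opens$ lets $\cov\mu^*$ pass through the join, and the standard inverse-quantal-frame identity $\cov\mu^*(v)=\V_{u\in\ipi(\cov\opens)}u\otimes u^*v$ then finishes the computation. The whole argument is essentially routine bookkeeping across the three products in play (multiplication in $\opens$, multiplication in $\cov\opens$, and the $\cov\opens$-action on $\opens$); the only mildly delicate point I expect is the systematic use of order reflection by $j$ to transfer inequalities from $\cov\opens$ back to $\opens$, without which \eqref{eq:jprop4} and \eqref{eq:jprop3} would be out of reach.
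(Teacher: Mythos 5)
Your proof is correct and follows essentially the same route as the paper's: lax multiplicativity of $j$ (Lemma~\ref{lemma:jpreserveslaxmu}), the bimodule identity $j(a\cdot y)=aj(y)$, order reflection by the frame monomorphism $j$, and the join decomposition of $j(q)$ over partial units combined with the standard formula for $\cov\mu^*$ on $\ipi(\cov\opens)$. The only cosmetic difference is your preliminary step: $j(1_{\opens})=1_{\cov\opens}$ holds outright because $j$, being a frame homomorphism, preserves the empty meet, so the detour through condition~\eqref{def:ieq0} is unnecessary (though valid).
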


\begin{proof}
\eqref{eq:jprop1}: we have
$
j(x1_{\opens})1_{\cov\opens}=j(x1_{\opens})j(1_{\opens}) \leq j(x1_{\opens}1_{\opens}) \leq j(x1_{\opens})
$,
so $j(x1_{\opens})1_{\cov\opens}=j(x1_{\opens})$
because the right-sided elements of $\rs(\cov\opens)$ are strict.

\eqref{eq:jprop2}: $j(x)1_{\cov\opens} = j(x)j(1_{\opens}) \leq j(x1_{\opens})$.

\eqref{eq:jprop3}:  we have
$
j(j(x1_{\opens})\cdot 1_{\opens}) = j(x1_{\opens})j(1_{\opens}) = j(x1_{\opens})1_{\cov\opens} = j(x1_{\opens})
$,
so $j(x1_{\opens})\cdot 1_{\opens} = x1_{\opens}$ because $j$ is monic.

\eqref{eq:jprop4}: $j(j(x)\cdot y) = j(x)j(y) \leq j(xy)$, so $j(x)\cdot y \leq xy$ because $j$ is monic.

\eqref{funct3, eq: multiplicativityopens}: We have
\[
\cov\mu^*(j(q))=\cov\mu^*(\V_{\substack{v\in \ipi( \cov \opens) \\ v\leq j(q)}}v)= \V_{\substack{v\in \ipi( \cov \opens) \\ v\leq j(q)}}\cov\mu^*(v)= \V_{\substack{v\in \ipi( \cov \opens) \\ v\leq j(q)}}\V_{u\in\ipi(\cov\opens)} u\otimes u^*v,
\]
where the last equality follows from \cite{GSQS}*{Lemma 3.15}. \qed
\end{proof}

\begin{remark}\label{remark:j(opens)congopens}
We remark that in fact the conditions $j(x)j(y)\leq j(xy)$ and $j(x)\cdot y\leq xy$ are equivalent because $j(x)j(y)=j(j(x)\cdot y)$ holds for all $x,y\in \opens$. Moreover, notice that $j(\opens)$ can be made an involutive quantale isomorphic to $\opens$ because $j:\opens\to j(\opens)$ is a frame isomorphism and thus we can define the following multiplication $\bullet$ in $j(\opens)$:
\[
j(a)\bullet j(b) := j(ab).
\]
However, $\bullet$ does not coincide with the multiplication in $\cov\opens$, so $(j(\opens),\bullet)$ is in general not a subquantale of $\cov\opens$.
\end{remark}

By Lemma~\ref{lemma:propertiesj}, for all $x\in \opens$ we have  $j(x1_{\opens})1_{\cov\opens} = j(x1_{\opens})$, so $j$ restricts to a frame monomorphism $j':\rs(\opens)\to \rs(\cov\opens)$.

\begin{lemma}\label{lemma:j'isiso}
Let $\opens$ be an inverse-embedded quantal frame. Then $\opens$ satisfies the following conditions:
\begin{enumerate}
\item\label{eq:j'epi1} $j'$ is surjective;
\item\label{eq:j'epi2} $j'$ is an order isomorphism;
\item\label{eq:j'epi3} $\rs(\cov\opens)\subset j(\rs(\opens))$.
\end{enumerate}
\end{lemma}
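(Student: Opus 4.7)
My plan hinges on the observation that items \eqref{eq:j'epi1} and \eqref{eq:j'epi3} express the same statement (every $w\in\rs(\cov\opens)$ lies in $j(\rs(\opens))$), while item \eqref{eq:j'epi2} will follow formally once surjectivity is in hand: $j$ is injective (as a frame monomorphism) and reflects order (an injective frame map reflects order, since $f(x)\le f(y)$ iff $f(x\wedge y)=f(x)$ iff $x\wedge y=x$); these two properties transfer to the restriction $j'$, and combined with surjectivity onto $\rs(\cov\opens)$ make $j'$ an order isomorphism. So the real work is to prove surjectivity of $j'$.

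To do this, I fix $w\in\rs(\cov\opens)$. Because $\cov\opens$ is an inverse quantal frame, hence unital, right-sided elements are automatically strict, so $w\,1_{\cov\opens}=w$. Condition \eqref{def:ieq0} of Definition~\ref{def:ieqf} provides some $q\in\opens$ with $j(q)=w$, and $q$ is unique by injectivity of $j$. The crucial step is to recognise this $q$ as $w\cdot 1_{\opens}$, obtained via the left $\cov\opens$-action on $\opens$: using that $j$ is a $\cov\opens$-$\cov\opens$-bimodule homomorphism together with $j(1_{\opens})=1_{\cov\opens}$, I compute $j(w\cdot 1_{\opens}) = w\,j(1_{\opens}) = w\,1_{\cov\opens} = w = j(q)$, and injectivity of $j$ forces $q=w\cdot 1_{\opens}$.

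With this explicit form of $q$, axiom~\eqref{QQquantale1} together with the tautology $1_{\opens}\,1_{\opens}\le 1_{\opens}$ yields
\[
q\,1_{\opens} \;=\; (w\cdot 1_{\opens})\,1_{\opens} \;=\; w\cdot(1_{\opens}\,1_{\opens}) \;\le\; w\cdot 1_{\opens} \;=\; q,
\]
so $q\in\rs(\opens)$, proving \eqref{eq:j'epi1} and \eqref{eq:j'epi3} at once. The main delicacy I expect is finding the right preimage: naive attempts via $\spp$ or the order isomorphism $\rs(\cov\opens)\cong B$ do not manifestly land in $\rs(\opens)$, whereas the ansatz $q=w\cdot 1_{\opens}$ lets the $\cov\opens$-bimodule identities do all the work.
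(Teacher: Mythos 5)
Your proof is correct, and while its skeleton matches the paper's --- reduce items \eqref{eq:j'epi2} and \eqref{eq:j'epi3} to surjectivity of $j'$, take the preimage $q$ of $w\in\rs(\cov\opens)$ supplied by Definition~\ref{def:ieqf}\eqref{def:ieq0}, and show that $q$ is right-sided --- the mechanism for that last step is genuinely different. The paper stays inside $\cov\opens$ and uses the lax multiplicativity of $j$ through Lemma~\ref{lemma:propertiesj}: it chains $j(q)=j(q)1_{\cov\opens}\le j(q1_{\opens})1_{\cov\opens}=j(q1_{\opens})$ via \eqref{eq:jprop2} and \eqref{eq:jprop1} and then invokes monicity of $j$; these properties ultimately rest on condition \eqref{def:ieq3} of Definition~\ref{def:ieqf} through Lemma~\ref{lemma:jpreserveslaxmu}. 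You instead use only the $\cov\opens$-$\cov\opens$-bimodule data: the identification $q=w\cdot 1_{\opens}$ follows from $j$ being a bimodule homomorphism together with $j(1_{\opens})=1_{\cov\opens}$ (frame homomorphisms preserve the top) and strictness of right-sided elements in the unital quantale $\cov\opens$, after which $q1_{\opens}=w\cdot(1_{\opens}1_{\opens})\le w\cdot 1_{\opens}=q$ is immediate from axiom \eqref{QQquantale1}. Your route has two small advantages: it delivers right-sidedness of $q$ in one stroke, whereas the paper's displayed chain only exhibits the inequality $j(q)\le j(q1_{\opens})$ and leaves the converse (needed to conclude $q=q1_{\opens}$) implicit; and it makes the inverse of $j'$ explicit as $w\mapsto w\cdot 1_{\opens}$. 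Your side remark that the bimodule computation alone already shows $w\in j(\opens)$ for strict right-sided $w$ is also accurate, though, as you do, it is cleaner to keep condition \eqref{def:ieq0} as the stated source of the preimage. Both arguments draw only on Definition~\ref{def:ieqf}, so neither is more general than the other.
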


\begin{proof} 
It suffices to prove that \eqref{eq:j'epi1} holds (clearly, conditions \eqref{eq:j'epi2} and \eqref{eq:j'epi3} are equivalent to \eqref{eq:j'epi1}). In order to see that $j'$ is surjective first notice that, by Definition~\ref{def:ieqf}\eqref{def:ieq0}, for all $y\in \rs(\cov\opens)$ there is $x\in\opens$ such that $y=j(x)$. The conclusion follows from the fact that necessarily $x=x1_{\opens}$ (\ie, $x$ is right-sided), since $j$ is monic and we have
\[
j(x) = j(x) 1_{\cov\opens} \le j(x 1_{\opens}) 1_{\cov\opens} = j(x 1_{\opens}). \qed
\]
\end{proof}

\begin{lemma}\label{lemma:supportofopens}
Let $\opens$ be an inverse-embedded quantal frame with base locale $B$. Then $\opens$ is an equivariantly supported $B$-$B$-quantal frame.
\end{lemma}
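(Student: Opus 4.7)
The plan is to transport the equivariant support from $\cov\opens$ down to $\opens$ along the frame monomorphism $j$. Let $\cov\spp:\cov\opens\to B$ be the (unique) equivariant support of the inverse quantal frame $\cov\opens$, and define
\[
\spp_\opens(x):=\cov\spp(j(x)).
\]
Since $j$ and $\cov\spp$ both preserve arbitrary joins, $\spp_\opens$ is a sup-lattice homomorphism $\opens\to B$.

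By Remark~\ref{remark:ieqBBquantale}, $\opens$ is already an involutive $B$-$B$-quantale, so to see that it is a $B$-$B$-quantal frame only the frame-distributivity of the $B$-actions over binary meets needs checking; binary meets distribute over joins automatically because $\opens$ is a frame. The identities $(\lres{a}{q})\wedge m=\lres{a}{(q\wedge m)}$ and $m\wedge(\rres{q}{a})=\rres{(q\wedge m)}{a}$ may be verified by applying $j$ to each side: since $j$ is an injective $\cov\opens$-bimodule frame homomorphism and the same identities hold in $\cov\opens$ (which, being an inverse quantal frame, is a $B$-$B$-quantal frame), injectivity of $j$ transfers them to $\opens$.

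The three support axioms follow by the same ``apply $j$ and use injectivity'' pattern. For \eqref{spp1}, $\spp_\opens(1_\opens)=\cov\spp(j(1_\opens))=\cov\spp(1_{\cov\opens})=1_B$, since a frame homomorphism preserves the top element. For \eqref{spp3}, applying $j$ to $\lres{\spp_\opens(x)}{x}$ yields $\lres{\cov\spp(j(x))}{j(x)}=j(x)$ by the corresponding axiom for $\cov\spp$, so $\lres{\spp_\opens(x)}{x}=x$. The main step is \eqref{spp2}: applying $j$ to $\lres{\spp_\opens(x)}{y}$ gives $\lres{\cov\spp(j(x))}{j(y)}\le j(x)j(x)^*j(y)$ by the corresponding axiom for $\cov\spp$ together with $j(x^*)=j(x)^*$; Lemma~\ref{lemma:jpreserveslaxmu} (applied twice) then gives $j(x)j(x)^*j(y)\le j(xx^*y)$, so injectivity of $j$ yields $\lres{\spp_\opens(x)}{y}\le xx^*y$.

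Finally, equivariance is immediate: for $a\in B$,
\[
\spp_\opens(\lres{a}{x})=\cov\spp(j(\lres{a}{x}))=\cov\spp(\lres{a}{j(x)})=a\wedge\cov\spp(j(x))=a\wedge\spp_\opens(x),
\]
using that $j$ is a bimodule homomorphism and that $\cov\spp$ is equivariant. The only nontrivial obstacle is \eqref{spp2}, which relies on the lax multiplicativity of $j$ (Lemma~\ref{lemma:jpreserveslaxmu}); every other axiom reduces directly to the corresponding property of $\cov\spp$ together with injectivity of the frame homomorphism $j$.
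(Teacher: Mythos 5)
Your proof is correct and follows essentially the same route as the paper's: define $\spp:=\cov\spp\circ j$ and transport each axiom along the injective frame and bimodule homomorphism $j$, with Lemma~\ref{lemma:jpreserveslaxmu} doing the work for \eqref{spp2}. The only (cosmetic) difference is that for \eqref{spp2} you apply $j$ throughout and conclude by order-reflection inside $\cov\opens$, whereas the paper works with the $\cov\opens$-action on $\opens$ and finishes with the inequality $j(a)\cdot y\le ay$ of Lemma~\ref{lemma:propertiesj}.
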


\begin{proof}
$\cov \opens$ is an inverse quantal frame with base locale $B=\downsegment (e)\cong \spp(\cov \opens)$. Hence, by Remark~\ref{remark:ieqBBquantale}, $\opens$ is an involutive $B$-$B$-quantal frame. Let us denote by $\cov\spp$ the support of $\cov \opens$, and let us verify that the sup-lattice homomorphism
\[
\spp:=\cov\spp\circ j:\opens\to B
\]
defines a support on $\opens$.
\begin{description}
\item{\eqref{spp1}:} $\spp(1_{\opens})=\cov\spp(j(1_{\opens}))=\cov\spp(1_{\cov\opens})=1_B$ because $j$ is a frame homomorphism.
\item{\eqref{spp2}:} This follows from the sequence of (in)equalities:
\begin{align*}
\lres {\spp(x)}{y} &=\lres {\cov\spp(j(x))}{y}\\
&\le  j(x)j(x)^*\cdot y& \text{[due to \eqref{spp2}]}\\
&=  j(x)j(x^*)\cdot y&\text{[due to Definition~\ref{def:ieqf}\eqref{def:ieq1}]}\\
&\le  j(xx^*)\cdot y&\text{(by Lemma~\ref{lemma:jpreserveslaxmu})}\\
&\le xx^*y& \text{[by \eqref{eq:jprop4}].}
\end{align*}
\item{\eqref{spp3}:} We have
\[
j(\lres{\spp(x)}{x})=j(\spp(x)x)=\spp(x)j(x)=\cov\spp(j(x))j(x)=j(x),
\]
so $\lres{\spp(x)}{x}=x$ because $j$ is monic.
\end{description} 
Finally, taking into account that $\cov\spp$ and $j$ are $B$-equivariant, we have 
\[
\spp(\lres{b}{x})=\cov\spp(j(\lres{b}{x}))=\cov\spp(\lres{b}{j(x)})=b\wedge \cov\spp(j(x))= b\wedge \spp(x)
\]
for all $x\in \opens$ and $b\in B$. This proves that $\spp:\opens\to B$ is equivariant. \qed
\end{proof}

\begin{theorem}\label{funct3, theo: supportopens}
Let $\opens$ be an inverse-embedded quantal frame. Then $\opens$ is a groupoid quantale.
\end{theorem}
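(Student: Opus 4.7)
The plan is to verify, one at a time, the five conditions in the definition of a groupoid quantale: that $\opens$ is a $B$-$B$-quantal frame, equivariantly supported, reflexive, multiplicative, and satisfies the unit and inverse laws. The strategy throughout is to transfer each property from the inverse quantal frame $\cov\opens$ along the monomorphism $j$. I write $\cov\upsilon$ for the reflexive map on $\cov\opens$, given by $\cov\upsilon(y)=y\wedge e$.

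The $B$-$B$-quantal frame structure is given by Remark~\ref{remark:ieqBBquantale}, and the equivariant support by Lemma~\ref{lemma:supportofopens}. For reflexivity I would set $\upsilon:=\cov\upsilon\circ j:\opens\to B$, which is a frame homomorphism as a composite of frame homomorphisms; the bimodule property of $j$ then gives, for $a\in B$, $\upsilon(\lres{a}{1_\opens})=\cov\upsilon(\lres{a}{1_{\cov\opens}})=a$, and analogously on the right. For multiplicativity, Definition~\ref{def:ieqf}\eqref{def:ieq3} yields $(j\otimes j)\circ\mu^*=\cov\mu^*\circ j$; since $j$ and $\cov\mu^*$ both preserve joins, one concludes $\mu^*$ preserves joins once one knows $j\otimes j$ is a monomorphism, which follows by two applications of Definition~\ref{def:ieqf}\eqref{def:ieq2} combined with its involution-symmetric version. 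The inverse law $\lres{\upsilon(a)}{1_\opens}=\bigvee_{xx^*\le a}x$, once $j$ is applied, becomes the identity $(j(a)\wedge e)\cdot 1_{\cov\opens}=\bigvee_{xx^*\le a}j(x)$ in $\cov\opens$: one inequality is exactly Definition~\ref{def:ieqf}\eqref{def:ieq4}, while the other follows from the inverse law already available in $\cov\opens$ applied to each $j(x)$ with $xx^*\le a$, using that $j(x)j(x)^*=j(x)j(x^*)\le j(xx^*)\le j(a)$ by Lemma~\ref{lemma:jpreserveslaxmu} and Definition~\ref{def:ieqf}\eqref{def:ieq1}. Monicity of $j$ then transports the equality back to $\opens$.

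The main obstacle is the unit law $\bigvee_{xy\le a}\lres{\upsilon(x)}{y}=a$. The inequality $\le$ is routine: for $xy\le a$ one has $j(\lres{\upsilon(x)}{y})=(j(x)\wedge e)j(y)\le j(x)j(y)\le j(xy)\le j(a)$ by Lemma~\ref{lemma:jpreserveslaxmu}, so $\lres{\upsilon(x)}{y}\le a$ by monicity. The reverse inequality cannot simply be pulled back from the unit law of $\cov\opens$, which is trivial there (one can always take the factor equal to $e\in\cov\opens$). Instead I would start from the decomposition $j(a)=\bigvee_{ts\le j(a),\,t,s\in\cov\opens}(t\wedge e)s$ and, for each such pair $(t,s)$, invoke the right-adjoint property of $\cov\mu^*$ together with Definition~\ref{def:ieqf}\eqref{def:ieq3}:
\[
t\otimes s\le\cov\mu^*(ts)\le\cov\mu^*(j(a))=(j\otimes j)(\mu^*(a))=\bigvee_{xy\le a,\,x,y\in\opens}j(x)\otimes j(y)
\]
in $\cov\opens\otimes_B\cov\opens$. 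I would then apply the sup-lattice homomorphism $\phi:\cov\opens\otimes_B\cov\opens\to\cov\opens$ defined by $\phi(p\otimes q):=(p\wedge e)q$, which is $B$-balanced because the based-quantal-frame identity $p\cdot b=p\wedge\rres{1_{\cov\opens}}{b}$ combined with $\cov\upsilon$ being a frame homomorphism gives $\cov\upsilon(p\cdot b)=\cov\upsilon(p)\wedge b$. This produces $(t\wedge e)s\le\bigvee_{xy\le a}(j(x)\wedge e)j(y)=j\bigl(\bigvee_{xy\le a}\lres{\upsilon(x)}{y}\bigr)$; joining over $(t,s)$ yields $j(a)\le j\bigl(\bigvee_{xy\le a}\lres{\upsilon(x)}{y}\bigr)$, and monicity of $j$ concludes.
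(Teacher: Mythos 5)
Your proof is correct and its overall skeleton coincides with the paper's: the same reflexive map $\upsilon=\cov\upsilon\circ j$, the same transfer of the equivariant support via Lemma~\ref{lemma:supportofopens}, multiplicativity via monicity of $j\otimes j$ together with Definition~\ref{def:ieqf}\eqref{def:ieq3}, and the same two-inequality treatment of the inverse law (one side from Definition~\ref{def:ieqf}\eqref{def:ieq4}, the other from the inverse law of $\cov\opens$ applied to $j(x)$ with $j(x)j(x)^*\le j(xx^*)\le j(a)$). Where you genuinely diverge is the hard direction of the unit law. The paper expands $j(q)$ as a join of partial units $u\le j(q)$, inserts $u=(w\wedge e)w^*u$, and recognizes $\V_{u,w}w\otimes w^*u$ as $\cov\mu^*(j(q))$ via the explicit partial-unit formula for the reduced comultiplication, before applying \eqref{def:ieq3}; you instead invoke the unit law of $\cov\opens$ itself together with the adjunction inequality $t\otimes s\le\cov\mu^*(ts)\le\cov\mu^*(j(a))=(j\otimes j)(\mu^*(a))$, which lands you in the right subobject without ever computing $\cov\mu^*$ explicitly. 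The evaluation map $\phi(p\otimes q)=(p\wedge e)q$ you then apply is the same map the paper writes as $[\cov d^*,\ident]\circ(\cov\upsilon\otimes\ident)$, and your check that it is $B$-balanced is sound; your route is slightly more economical. One small repair is needed: your justification that $j\otimes j$ is monic does not work as stated, because the second factor $\ident_{\cov\opens}\otimes j:\cov\opens\otimes_B\opens\to\cov\opens\otimes_B\cov\opens$ has $\cov\opens$, not $\opens$, in the untouched slot and so is not an instance of Definition~\ref{def:ieqf}\eqref{def:ieq2} or any involution-symmetric variant of it; the paper obtains its injectivity from the fact that $\cov\opens$ is a flat $B$-module, and that is the argument you should cite.
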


\begin{proof}
By Lemma~\ref{lemma:supportofopens} we know that $\opens$ is a supported $B$-$B$-quantal frame, where $B$ is the base locale. Now we can endow $\opens$ with a frame homomorphism $\upsilon:\opens\to B$ defined by $\upsilon(q):=j(q)\wedge e$ for all $q\in \opens$. By \cite{Re07}*{Lemma 3.3, Lemma 3.4}, for all $b\in B$ we have
\[
\upsilon(\lres{b}{1_{\opens}})=j(\lres{b}{1_{\opens}})\wedge e=b1_{\cov\opens}\wedge e=\cov\spp(b)=b=\upsilon(\rres{1_{\opens}}{b}).
\]
This shows that $(\opens, \spp,\upsilon)$ is an equivariantly supported reflexive $B$-$B$-quantal frame.
Let us check that in fact $\opens$ is a groupoid quantale by verifying the following axioms.

\begin{description}

\item[Multiplicativity:] $\cov \opens$ satisfies the multiplicativity axiom because it is an inverse quantal frame~\cite{Re07}*{Cor.\ 4.14}. Since $j\otimes \ident$ is a monomorphism of frames due to Definition~\ref{def:ieqf}\eqref{def:ieq2}, the mapping
\[
\xymatrix{ j\otimes j: \opens\otimes_B\opens\ar@{^{(}->}[rr]^-{j\otimes \ident} && \cov \opens\otimes_B \opens \ar@{^{(}->}[rr]^-{\ident\otimes j} && \cov\opens\otimes_B\cov \opens }
\] 
is a monomorphim, since $\ident\otimes j$ is always a monomorphism because $\cov \opens$ is a flat $B$-module \cite{RR}. By Definition~\ref{def:ieqf}\eqref{def:ieq3}, the diagram
\[
\xymatrix{
\cov \opens\otimes_{B} \cov \opens  & \cov \opens \ar[l]_-{\cov\mu^*}\\
\opens \otimes_{B} \opens \ar@{^{(}->}[u]^{j\otimes j} & \opens \ar[l]^-{\mu^*} \ar@{^{(}->}[u]_{j}
}
\]
commutes, that is, $\cov\mu^*(j(q))=(j\otimes j)\mu^*(q)$ for all $q\in \opens$. So we have
\begin{align*}
(j\otimes j)\bigl(\mu^*\bigl(\V_i x_i\bigr)\bigr) &= \cov\mu^*\bigl(j\bigl(\V_i x_i\bigr)\bigr) =\V_i \mu^*(j(x_i))=\V_i (j\otimes j)(\mu^*(x_i))\\
&=(j\otimes j)\bigl(\V_i\mu^*(x_i)\bigr).
\end{align*}
Finally, since $j\otimes j$ is monic, we conclude that $\mu^*(\V_i x_i)=\V_i\mu^*(x_i)$ and $\opens$ is multiplicative.

\item[Unit laws:] We have to prove that
\begin{equation}\label{eq: unitlawieq}
\V_{xy\leq q} (\lres{\upsilon(x)}{y})=q
\end{equation}
holds for all $x,y,q\in \opens$. Indeed,
\begin{align*}
j(q) &= \V_{\substack{u\in \ipi(\cov \opens) \\ u\leq j(q)}}u\quad = \V_{\substack{u\in \ipi(\cov \opens) \\ u\leq j(q)}}(1_{\cov \opens}\wedge e)u\quad = \V_{\substack{u,w\in \ipi(\cov \opens) \\ u\leq j(q)}}(w\wedge e)u\\
& (\text{because $\cov \opens$ is an inverse quantal frame})\\
&= \V_{\substack{u,w\in \ipi(\cov \opens) \\ u\leq j(q)}}(w\wedge e)w^*u\\
& \text{(because $w\wedge e$ is a subsection of $w^*$)}\\
&= \V_{\substack{u,w\in \ipi(\cov \opens) \\ u\leq j(q)}}(w\wedge e)1_{\cov \opens}\wedge w^*u\quad = \V_{\substack{u,w\in \ipi(\cov \opens) \\ u\leq j(q)}}\cov d^*(\cov\upsilon(w))\wedge w^*u\\
&= \V_{\substack{u,w\in \ipi(\cov \opens) \\ u\leq j(q)}}[\cov d^*,\ident](\cov\upsilon\otimes \ident)(w\otimes w^*u)\ 
=\  [\cov d^*,\ident](\cov\upsilon\otimes \ident)\bigl(\V_{\substack{u,w\in \ipi(\cov \opens) \\ u\leq j(q)}}w\otimes w^*u\bigr)\\
&=\ [\cov d^*,\ident](\cov\upsilon\otimes \ident)(\cov \mu^*(j(q)))\
=\ [\cov d^*,\ident](\cov\upsilon\otimes \ident)((j\otimes j) \mu^*(q))\\
& [\text{due to Definition~\ref{def:ieqf}\eqref{def:ieq3}}]\\
&=\ [\cov d^*,\ident](\cov\upsilon\otimes \ident)(j\otimes j)\bigl(\V_{xy\leq q} x\otimes y\bigr)\\
&=\V_{xy\leq q}[\cov d^*,\ident](\cov\upsilon\otimes \ident)(j(x)\otimes j(y))\\
&=\V_{xy\leq q}\cov d^*(\cov\upsilon(j(x)))\wedge j(y)\\
&=\V_{xy\leq q} \upsilon(x)1_{\cov\opens}\wedge j(y)\\
&=\V_{xy\leq q} \upsilon(x)j(1_{\opens})\wedge j(y)\\
&=\V_{xy\leq q} j(\lres{\upsilon(x)}{1_{\opens}})\wedge j(y)\\
&=\V_{xy\leq q} j(\lres{\upsilon(x)}{y})= j\bigl(\V_{xy\leq q} \lres{\upsilon(x)}{y}\bigr).
\end{align*}

The last four steps hold because $j$ is a frame homomorphism. Thus \eqref{eq: unitlawieq} follows because $j$ is monic.

\item[Inverse laws:] By \cite{QRnonuspp}*{Rem.\ 5.5}, we have to show
\begin{equation}\label{eq:inverselawsieq}
\lres{\upsilon_{\opens}(q)}{1_{\cov \opens}}=\V_{\substack{xx^*\leq q}}x,
\end{equation}
for all $q,x\in \opens$. Indeed,
\begin{align*}
j(\lres{\upsilon(q)}{1_{\opens}})&=\upsilon(q)1_{\cov\opens}\\
&= (j(q)\wedge e)1_{\cov \opens}\\
&\geq \V_{\substack{y\in \cov\opens \\yy^*\leq j(q)}} y&\text{(by \cite{Re07}*{Lemma 4.17})}\\
&\geq \V_{\substack{y\in \opens \\j(y)j(y^*)\leq j(q)}} j(y)\\
&\geq \V_{\substack{y\in \opens \\j(yy^*)\leq j(q)}} j(y)& \text{(by Lemma~\ref{lemma:jpreserveslaxmu})}\\
&= j(\V_{\substack{y\in \opens \\yy^*\leq q}} y).  
\end{align*}
Therefore $\lres{\upsilon(q)}{1_{\opens}}\geq \V_{\substack{y\in \opens \\yy^*\leq q}} y$ because $j$ is monic. The other inequality follows from Definition~\ref{def:ieqf}\eqref{def:ieq4},
\[
j(\lres{\upsilon(q)}{1_{\opens}})=\upsilon(q)1_{\cov\opens}=(j(q)\wedge e)1_{\cov\opens}\leq \V_{\substack{y\in \opens \\yy^*\leq q}} j(y)=j\bigl(\V_{\substack{y\in \opens \\yy^*\leq q}} y\bigr).
\]
Hence, \eqref{eq:inverselawsieq} holds because $j$ is monic. Therefore, $\opens$ is a groupoid quantale as we claimed. \qed
\end{description}
\end{proof}

To close this subsection we shall give an interesting property of the right adjoint of $j$.  

\begin{lemma}\label{lemma:equivarianceofj_*}
Let $\opens$ be an inverse-embedded quantal frame. Then the right adjoint of $j$ is $\ipi(\cov \opens)$-equivariant.
\end{lemma}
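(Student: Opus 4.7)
Unfolding terminology, the assertion that $j_*$ is \emph{$\ipi(\cov\opens)$-equivariant} should mean that for every partial unit $s\in\ipi(\cov\opens)$ and every $y\in\cov\opens$,
\[
j_*(sy)=s\cdot j_*(y)\quad\text{and, dually,}\quad j_*(ys)=j_*(y)\cdot s.
\]
I will plan the proof of the first identity, the second being entirely symmetric.

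The inequality $s\cdot j_*(y)\le j_*(sy)$ is immediate: since $j$ is a $\cov\opens$-bimodule homomorphism and $j\circ j_*\le\ident$,
\[
j(s\cdot j_*(y))=s\,j(j_*(y))\le sy,
\]
and adjunction yields the claim. For the reverse inequality, put $x:=j_*(sy)$. Using the bimodule property of $j$ and the partial unit inequality $s^*s\le e$,
\[
j(s^*\cdot x)=s^*j(x)\le s^*sy\le ey=y,
\]
so $s^*\cdot x\le j_*(y)$ by adjunction, and therefore $(ss^*)\cdot x=s\cdot(s^*\cdot x)\le s\cdot j_*(y)$.

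The crux is then the claim $x=(ss^*)\cdot x$. Since $s$ is a partial unit, $s=ss^*s$ and hence $sy=ss^*\cdot sy$, so $j(x)\le sy\le \lres{ss^*}{1_{\cov\opens}}$. The $B$-$B$-quantal frame axiom applied with $q=1_{\cov\opens}$ gives the meet-form of the base action, $\lres{b}{m}=\lres{b}{1_{\cov\opens}}\wedge m$; taking $b=ss^*\in B$ yields
\[
(ss^*)\cdot j(x)=\lres{ss^*}{1_{\cov\opens}}\wedge j(x)=j(x).
\]
Since $j$ is a bimodule homomorphism, $j((ss^*)\cdot x)=(ss^*)\cdot j(x)=j(x)$, and the monicity of $j$ forces $(ss^*)\cdot x=x$. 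Combining, $j_*(sy)=x\le s\cdot j_*(y)$, which finishes the first identity.

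The step I expect to be the main obstacle is showing $(ss^*)\cdot x=x$; it requires both the inverse-semigroup identity $s=ss^*s$ for partial units of $\cov\opens$ and the recognition that the base-locale action on a $B$-$B$-quantal frame takes the meet-form $b\cdot m=b 1\wedge m$, after which the monicity of $j$ transfers the equality from $\cov\opens$ back to $\opens$. The symmetric equivariance $j_*(ys)=j_*(y)\cdot s$ will follow by replaying the same argument on the other side, using $ss^*\le e$ in place of $s^*s\le e$ and the right-handed version of the meet-form.
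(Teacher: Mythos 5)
Your proposal is correct and follows essentially the same route as the paper: the easy inequality $s\cdot j_*(y)\le j_*(sy)$ via the bimodule property of $j$ and adjunction, and the reverse inequality by passing through $s^*$ and showing that $ss^*$ acts as the identity on $j_*(sy)$. The only (minor) difference is in how that last fixing step is established --- you use the meet-form $\lres{b}{m}=\lres{b}{1_{\cov\opens}}\wedge m$ of the base action in $\cov\opens$ together with monicity of $j$, whereas the paper argues elementwise inside the join defining $j_*(sy)$ using the stable support $\spp=\cov\spp\circ j$ on $\opens$; both devices encode the same fact that $\spp(j_*(sy))\le ss^*$.
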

\begin{proof}
Since in particular $j$ is a sup-lattice homomorphism, it has a right adjoint $j_*$ given by
\begin{equation}
j_*(x)=\V \{ a\in \opens\ \st\ j(a)\leq x   \}.
\end{equation}
Let us prove that, for all $s\in \ipi(\cov \opens)$ and $x\in \cov\opens$, we have
\begin{equation} \label{eq:equality}
s\cdot j_*(x)=j_*(sx).
\end{equation}
Indeed, 
\begin{align*}
s\cdot j_*(x) &= s\cdot \V \{ a\in \opens\ \st \ j(a)\leq x   \}\\
& = \V \{ s\cdot a\in \opens\ \st \ j(a)\leq x   \}\\
& \leq \V \{ s\cdot a\in \opens\ \st \ sj(a)\leq sx   \}\\
& = \V \{ s\cdot a\in \opens\ \st \ j(s\cdot a)\leq sx   \}& \text{($j$ is $\ipi(\cov \opens)$-equivariant)}\\
& \leq \V \{ q\in \opens\ \st \ j(q)\leq sx   \}\\
&= j_*(sx).
\end{align*}
Moreover, due to stability of the support we have, if $j(q)\leq sx$,
\[
\spp(q)=\cov\spp(j(q))\leq \cov\spp(sx)\leq \cov\spp(s)=ss^*,
\]
and thus
\[
q=(\lres{\spp(q)}{q})\leq (\lres{\cov\spp(s)}{q})\leq (ss^*)\cdot q= (\lres{ss^*}{q})\leq q,
\]
from which it follows that
\[
j_*(sx) = \V \{ (ss^*)\cdot q\in \opens\ \st \ j(q)\leq sx   \}.
\]
Therefore,
\begin{align*}
j_*(sx) &= \V \{ (ss^*)\cdot q\in \opens\ \st \ j(q)\leq sx   \}\\
&=  s\cdot \V \{ s^*\cdot q\in \opens\ \st \ j(q)\leq sx   \}\\
& \leq s\cdot \V \{ s^*\cdot q\in \opens\ \st \ s^*j(q)\leq s^*sx   \}\\
& = s\cdot \V \{ s^*\cdot q\in \opens\ \st \ j(s^*\cdot q)\leq s^*sx   \}& \text{($j$ is an $\cov \opens$-$\cov\opens$-bimodule)}\\
& \leq s\cdot \V \{ a\in \opens\ \st \ j(a)\leq x   \}\\
& \leq s\cdot j_*(x),
\end{align*} 
which proves \eqref{eq:equality}. \qed
\end{proof}

\begin{remark}
For all $u\in \ipi(\cov\opens)$ we have
\begin{align*}
j_*(u)&=j_*(uu^*u)=u\cdot j_*(uu^*)&\text{(due to Lemma~\ref{lemma:equivarianceofj_*})}\\
&\le u\cdot j_*(e)& \text{(because $uu^*\le e$)}\\
&= u\cdot 0&\text{($\opens$ is a non-unital quantale)}\\
&=0. 
\end{align*}
Thus, $j_*(u)=0$ for all $u\in \ipi(\cov\opens)$. 
\end{remark}

%%%%%%%%%%%%%%%%%%%%%%%%%%%%%%%%%%%%%
%%%%%%%%%%%%%%%%%%%%%%%%%%%%%%%%%%%%%

\subsection{\'Etale-covered groupoids}

The purpose of this subsection is to establish a bijective correspondence between the class of inverse-embedded quantal frames and a class of open groupoids called \'etale-covered groupoids, that is, open groupoids which admit a suitable notion of covering by an \'etale groupoid.

\begin{definition}\label{cover}
Let $G$ and $H$ be groupoids. We shall say that $H$ \emph{covers} $G$ if there is an epimorphic functor of groupoids $J:H\to G$ such that $J_0:H_0\to G_0$ is an isomorphism. 
\end{definition}

\begin{lemma}\label{extension}
Let $G$ and $H$ be open groupoids such that $H$ covers $G$. Any $G$-action lifts to an $H$-action.
\end{lemma}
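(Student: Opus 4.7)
The plan is to construct the lifted $H$-action explicitly by pulling back the $G$-action along $J$. Write $(X,p,\act)$ for the given left $G$-action, with anchor map $p:X\to G_0$ and action map $\act:G_1\times_{G_0}X\to X$. Since $J_0:H_0\to G_0$ is an isomorphism in $\Loc$ by hypothesis, define the $H$-anchor by
\[
p_H := J_0^{-1}\circ p\colon X\to H_0.
\]
Next, exploit the functoriality of $J$: the identity $d_G\circ J_1=J_0\circ d_H$, combined with $p=J_0\circ p_H$, implies that the pair $(J_1\circ \pi_1^H,\pi_2^H):H_1\times_{H_0}X\to G_1\times X$ satisfies $d_G\circ J_1\circ\pi_1^H=p\circ\pi_2^H$, hence factors uniquely through the pullback $G_1\times_{G_0}X$, yielding a canonical map of locales $\widetilde J:H_1\times_{H_0}X\to G_1\times_{G_0}X$. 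Define the lifted action by
\[
\act_H := \act\circ\widetilde J\colon H_1\times_{H_0}X\to X.
\]

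It remains to verify the three $H$-action axioms, each of which reduces to the corresponding $G$-axiom via one of the functoriality identities for $J$. Anchor compatibility $p_H\circ\act_H=r_H\circ\pi_1^H$ follows from $p\circ\act=r_G\circ\pi_1^G$ together with $r_G\circ J_1=J_0\circ r_H$, after cancelling $J_0^{-1}$. The unit law follows from $J_1\circ u_H=u_G\circ J_0$, which ensures that $\widetilde J$ sends the $H$-unit section of $H_1\times_{H_0}X$ to the $G$-unit section of $G_1\times_{G_0}X$; so the $H$-unit law reduces to the $G$-unit law after composing with $\act$. Associativity is analogous, using $J_1\circ m_H=m_G\circ(J_1\times J_1)$ to show that $\widetilde J$ intertwines the $H$- and $G$-compositions at the level of the triple pullback $H_1\times_{H_0}H_1\times_{H_0}X$, so that the $H$-associativity is pulled back from the $G$-associativity.

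No real obstacle is expected: this is a purely functorial construction. Notably, the epimorphism hypothesis on $J_1$ plays no role in existence; only the assumption that $J_0$ is an isomorphism is used, and it is essential in order to define $p_H$ as a genuine map of locales via $J_0^{-1}$. The epimorphism condition presumably enters elsewhere, for instance to pin down uniqueness of such a lift or to relate $H$-actions back to $G$-actions in a converse direction.
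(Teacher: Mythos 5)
Your proposal is correct and follows essentially the same route as the paper: the anchor is transported along $J_0^{-1}$, the action is defined as $\act$ precomposed with the canonical map $J_1\times\ident_X$ into the pullback $G_1\times_{G_0}X$, and each $H$-action axiom is reduced to the corresponding $G$-axiom using the functoriality identities for $J$. Your closing observation that the epimorphism hypothesis on $J_1$ is not needed for this lifting (only the invertibility of $J_0$) is also consistent with the paper's argument.
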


\begin{proof}
Let $(X,p,\act)$ be a $G$-action. Notice that the mapping $q:X\to H_0$ defined by $q:=  J_0^{-1}\circ p$ is a map of locales. Let us define $\bct:= \act\circ (J\times \ident_X)$. Diagramatically, we have
\[
\vcenter{\xymatrix{
 H_1\times_{G_0} X  \ar[d]_{J_1\times \ident_X}  \ar[rd]^{\bct} \\
 G_1\times_{G_0} X  \ar[r]_-{\act} & X. }
}
\]
Let us show that $\bct$ is an $H$-action by verifying all the axioms:
\begin{enumerate} 
\item \textbf{Pullback:}
\[
\vcenter{ \xymatrix{
G_1\times_{G_0} X  \ar[ddd]_{\act} \ar[rrrr]^{\pi_1} &&&& G_1 \ar[ddd]^{d_G} \\ 
& H_1\times_{G_0} X  \ar[ul]_{J_1\times \ident_X} \ar[d]^{\bct} \ar[rr]^{\pi_1} && H_1 \ar[ur]_{J_1} \ar[d]^{d_H}\\
& X \ar@{=}[dl]  \ar[rr]_{q} && H_0 \ar[dr]^{J_0} \\
 X \ar[rrrr]_{p} &&&& G_0. }
}
\]
\item \textbf{Associativity:}
\[
\vcenter{ \small\xymatrix{
& G_1\times (G_1\times_{G_0} X) \ar@{.>}[dd]  \ar[rrr]^{\ident_{G_1}\times \act} &&&  G_1\times_{G_0} X \ar@{=}[dd] \\
H_1\times (H_1\times_{G_0} X) \ar[d]_{\cong} \ar[ur]^-{J_1\times(J_1\times \ident_X)} \ar[rr]| {\ident_{H_1}\times \bct} &&  H_1\times_{G_0} X \ar[urr]^{J_1\times \ident_X} \ar[dd]|{\bct} \\
H_2\times_{G_0} X \ar[d]_{m_H\times \ident_X} & G_2 \times_{G_0} \ar@{.>}[rrr]_-{m_G\times \ident_X} X &&& G_1\times_{G_0} X \ar[dll]_{\act} \\  
H_1\times_{G_0} X \ar@{.>}[urrrr]^{J\times \ident_X} \ar[rr]_{\bct}   && X.}
}
\]
\item\textbf{Unitarity:}
\[
\vcenter{\xymatrix{
 & G_1\times_{G_0} X \ar@/^2pc/[rdd]^{\act} \\
 & H_1\times_{G_0} X \ar[u]_{J_1\times \ident_X} \ar[dr]_{\bct} \\
 X \ar@/^2pc/[uur]^{\langle u\circ p,\ident_X \rangle} \ar@{=}[rr] \ar[ur]_{\langle u\circ q,\ident_X \rangle} && X.
}
}
\]
\end{enumerate}
Since $\act$ is a $G$-action the above diagrams commute. Therefore it is straightforward to verify that $(X,q,\bct)$ is an $H$-action. \qed
\end{proof}

\begin{lemma}\label{lemma: actionofbisectionsonG}
Let $G$ be an open groupoid and $\cov G$ an \'etale groupoid such that $\cov G$ covers $G$. Then there is an action of $\opens(\cov G)$ on $\opens(G)$. 
\end{lemma}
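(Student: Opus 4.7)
The plan is to construct the desired $\opens(\cov G)$-module structure on $\opens(G)$ by lifting the canonical self-action of $G$ via Lemma~\ref{extension} and then invoking the standard \'etale-groupoid-to-quantale-module translation recalled in the preliminaries.

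The starting point is the observation that $G$ acts on itself from the left via the triple $(G_1,r,m)$: with $p:=r$ as the anchor, the pullback $G_1\times_{G_0} G_1$ (taken along $d$ and $r$) is exactly $G_2$, and the action map is the multiplication $m:G_2\to G_1$. The three $G$-action axioms---compatibility of anchor, associativity and unitarity---amount to $r\circ m=r\circ \pi_1$, the associativity of $m$, and $m\circ(u\times\ident)=\ident$, all of which are just the groupoid axioms for $G$. Since by hypothesis $\cov G$ covers $G$, Lemma~\ref{extension} lifts $(G_1,r,m)$ to a $\cov G$-action on the same underlying locale $G_1$, with anchor $q:=J_0^{-1}\circ r$ and action $\bct:=m\circ(J_1\times\ident_{G_1})$.

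To conclude, use that $\cov G$ is \'etale, so the standard construction recalled in the paragraph on actions of open groupoids attaches to any $\cov G$-locale $X$ a left $\opens(\cov G)$-module structure on $\opens(X)$, given by the composite of the canonical quotient $\opens(\cov G)\otimes \opens(X)\twoheadrightarrow \opens(\cov G)\otimes_{\opens(G_0)}\opens(X)$ with $\bct_!$. Applied to the lifted $\cov G$-locale on $X=G_1$, and noting that the sup-lattice underlying the groupoid quantale $\opens(G)$ is exactly $\opens(G_1)$, this yields the desired action of $\opens(\cov G)$ on $\opens(G)$. The only conceptual point worth flagging is that the quantale multiplication on $\opens(G)$ plays no role at this stage; the lemma asserts only a module structure on $\opens(G)$ regarded as a sup-lattice, so no further compatibility needs to be checked here.
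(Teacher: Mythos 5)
Your proof is correct, but it takes a genuinely different route from the paper. The paper identifies $\ipi(\opens(\cov G))$ with the pseudogroup $\sections(\cov G)$ of local bisections, pushes bisections forward along $J$ to get a monoid homomorphism $\Phi:\sections(\cov G)\to\sections(G)$, composes with the action $\Psi$ of $\sections(G)$ on $\opens(G)$ from \cite{PR12}*{Def.\ 4.10}, and then extends to all of $\opens(\cov G)$ by join-density of the partial units. You instead observe that $G$ acts on itself by left translation, lift that action along the cover using Lemma~\ref{extension}, and feed the resulting $\cov G$-locale structure on $G_1$ into the general groupoid-action-to-quantale-module construction recalled in the preliminaries (note that only openness of $\cov G$, not \'etaleness, is needed for $\bct_!$ to exist, since the action map of an open groupoid is always open via the shear isomorphism). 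Your argument is more self-contained --- it relies only on material already established in the paper rather than on \cite{PR12}*{Th.\ 3.12, Def.\ 4.10} --- and is arguably cleaner; what the paper's route buys is an explicit bisection-level description of the action $(s,q)\mapsto s\cdot q$ for $s\in\ipi(\opens(\cov G))$, which is the form in which the action is invoked in Definition~\ref{def: etalecoveredgroupoid}\eqref{def: etalecoveredgroupoid2} and in the equivariance arguments that follow. Since later results refer to \emph{the} action defined in this lemma, one should remark (it is a short computation on partial units, where $\bct_!$ restricts to translation by the pushed-forward bisection $J_1\circ s$) that your module structure coincides with the paper's; for the bare existence statement of the lemma this is not needed. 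The only cosmetic quibble is the choice of anchor ($r$ versus $d$) for the self-action, which depends on the paper's pullback conventions and does not affect the argument.
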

\begin{proof}
Since  $\ipi(\opens(\cov G))$ is join-dense in  $\opens(\cov G)$, it suffices to show that there is an action of  $\ipi(\opens(\cov G))$ on $\opens(G)$. We begin by identifying $\ipi(\opens(\cov G))$ with $\sections(\cov G)$ because they are isomorphic as involutive monoids (see \cite{PR12}*{Th.\ 3.12}). Let us define $\Phi: \sections(\cov G)\to \sections(G)$ as follows: for every local bisection $s:U\to \cov G_1$ where $U$ is an open sublocale of $\cov G_0$, the mapping $\Phi(s): J_0(U)\to G_1$ (where $J_0(U)$ is the image of $U$ under $J_0$) is given by 
\[
\Phi(s)=J_1\circ s \circ (J'_0)^{-1},
\]
where $J'_0:U\to J_0(U)$ is the restriction of $J_0$ to $U$. We notice that $\Phi(s)$ is in fact a local bisection of $G$: 
\begin{itemize}
\item $d\circ \Phi(s)= d\circ J_1\circ s\circ (J'_0)^{-1}=J_0\circ \cov d\circ  s\circ (J'_0)^{-1}=\ident$ because $s$ is a local bisection of $\cov G$ and $J$ is a functor;
\item $r\circ \Phi(s)= r\circ J_1\circ s\circ (J'_0)^{-1}=J_0\circ \cov r\circ  s\circ (J'_0)^{-1}$ is an open regular monomorphism of locales because $\cov r\circ s$ is, too.
\end{itemize}
In addition, again because $J$ is a functor, we clearly have $\Phi(\cov u) = u$ and $\Phi(s\circ t)=\Phi(s)\circ\Phi(t)$ for all $s,t\in\sections(\cov G)$, so $\Phi$ is a homomorphism of monoids. Since $\opens(G)$ is a groupoid quantale, there is a mapping $\Psi:\sections(G)\to \operatorname{End}(\opens(G))$, that is, there is an action of $\sections(G)$ on $\opens(G)$ (see, \cite{PR12}*{Def.\ 4.10}). The composition $\Psi\circ \Phi: \sections(\cov\opens)\to \opens(G)$ yields an action of $\ipi(\opens(\cov G))$ on $\opens(G)$, which lifts to an action of $\opens(\cov G)$ on $\opens(G)$. \qed  
\end{proof}

\begin{definition}\label{def: etalecoveredgroupoid}
By an \emph{\'etale-covered groupoid} is meant an open groupoid $G$  together with  an \'etale groupoid $\cov G$ such that the following conditions hold.
\begin{enumerate}
\item\label{def: etalecoveredgroupoid1} $\cov G$ covers $G$ by an epimorphic functor $J:\cov G \to G$.
\item\label{def: etalecoveredgroupoid2} $J^*(s\cdot q)=sJ^*(q)$ for all $s\in \ipi(\cov\opens)$ and $q\in \opens(G)$ --- that is, $J^*$ is $\ipi(\cov\opens)$-equivariant --- where the action $\cdot$ is defined as in Lemma~\ref{lemma: actionofbisectionsonG}.
\item\label{def: etalecoveredgroupoid3}  $J_1\times \ident_{G_1}: \cov G_1\times_{G_0} G_1\to G_1\times_{G_0} G_1$ is an epimorphism of locales.
\end{enumerate}
We shall denote the category of $G$-locales and $G$-equivariant maps between them by $G$-$\Loc$. 
\end{definition}

\begin{example}
Every coverable groupoid $G$ in the sense of \cite{PR12} is an \'etale-covered groupoid: $\cov G$ is defined by $\lcc(\sections(\opens(G)))$, and the functor $J:\cov G\to G$ is given by $J^*=j$, where for all $q\in \opens(G)$
\[
j(q)=\V \{ \sigma\in \sections(\opens(G))  \st\ s^*(q)=U \}.
\]
[Recall that $\sigma=(U,s)$ --- \cf\ \eqref{def:bisection}.]
Moreover, in \cite{PR12} it is proved that
\begin{itemize}
\item $j$ and $j\otimes \ident$ are frame monomorphisms,
\item $j(s\cdot q)=sj(q)$ for all $s\in \ipi(\cov\opens)$.
\end{itemize}
Therefore the functor $J$ satisfies all the conditions of Definition~\ref{def: etalecoveredgroupoid}. In particular, every Lie groupoid
%and, more generally, every sober locally compact groupoid are
is an \'etale-covered groupoid.
\end{example}

\begin{theorem}\label{theorem:protinresende}
If $\opens$ is an inverse-embedded quantal frame then $\mathcal{G}(\opens)$ is an \'etale-covered groupoid. And, conversely, if $G$ is an \'etale-covered groupoid then $\opens(G)$ is an inverse-embedded quantal frame.
\end{theorem}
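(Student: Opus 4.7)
The strategy is to treat the two implications separately as translations across the known duality between groupoid quantales and open groupoids (and, in the \'etale case, between inverse quantal frames and \'etale groupoids), with the extra structure dualizing $j\leftrightarrow J$. In each direction most conditions in Definition~\ref{def:ieqf} pair up naturally with conditions in Definition~\ref{def: etalecoveredgroupoid}, and the heart of the proof is checking that the resulting map of locales is actually a functor of groupoids on one side, and that the resulting frame map is a bimodule homomorphism on the other.

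\textit{From inverse-embedded to \'etale-covered.} Starting from an inverse-embedded quantal frame $\opens$, Theorem~\ref{funct3, theo: supportopens} promotes $\opens$ to a groupoid quantale, so it corresponds to an open groupoid $G:=\mathcal{G}(\opens)$; likewise $\cov\opens$ yields an \'etale groupoid $\cov G:=\mathcal{G}(\cov\opens)$. Dualizing $j$ gives a map of locales $J_1:\cov G_1\to G_1$, and Lemma~\ref{lemma:j'isiso} provides an isomorphism $J_0:\cov G_0\to G_0$ between the base locales. I would then check that $J:=(J_0,J_1)$ is a functor by translating each groupoid axiom through the duality: compatibility with domain and range uses that $j$ is a $\cov\opens$-$\cov\opens$-bimodule homomorphism; compatibility with multiplication is Definition~\ref{def:ieqf}\eqref{def:ieq3} (preservation of reduced comultiplications); compatibility with units comes from $j$ being a unital frame map; and compatibility with inversion is Definition~\ref{def:ieqf}\eqref{def:ieq1}. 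The functor $J$ is epi because $j$ is monic, giving Definition~\ref{def: etalecoveredgroupoid}\eqref{def: etalecoveredgroupoid1}; Definition~\ref{def: etalecoveredgroupoid}\eqref{def: etalecoveredgroupoid3} dualizes to Definition~\ref{def:ieqf}\eqref{def:ieq2}; and Definition~\ref{def: etalecoveredgroupoid}\eqref{def: etalecoveredgroupoid2} reduces to the statement that $j$ is a left $\cov\opens$-module homomorphism, provided one verifies that the action of $\cov\opens$ on $\opens$ built in Lemma~\ref{lemma: actionofbisectionsonG} from $J$ coincides with the bimodule action $(s,q)\mapsto s\cdot q$ given with $\opens$.

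\textit{From \'etale-covered to inverse-embedded.} Conversely, assume $G$ is an \'etale-covered groupoid. Then $\opens(G)$ is a groupoid quantale, $\cov\opens:=\opens(\cov G)$ is an inverse quantal frame, and $j:=J^*:\opens(G)\to\opens(\cov G)$ is a frame monomorphism because $J$ is epimorphic. Lemma~\ref{lemma: actionofbisectionsonG} equips $\opens(G)$ with a left $\cov\opens$-module structure; the right action is obtained by conjugation with the involution, so that $\opens(G)$ becomes an involutive $\cov\opens$-$\cov\opens$-quantale. The axioms of Definition~\ref{def:ieqf} are then verified in turn: \eqref{def:ieq1} from $J$ commuting with inversion; \eqref{def:ieq3} from $J$ commuting with multiplication; \eqref{def:ieq2} as the dual of Definition~\ref{def: etalecoveredgroupoid}\eqref{def: etalecoveredgroupoid3}; \eqref{def:ieq4} by applying $j$ to the inverse law satisfied by the groupoid quantale $\opens(G)$, using the identity $j(\lres{\upsilon(a)}{1_{\opens(G)}})=(j(a)\wedge e)1_{\cov\opens}$ already encountered in the proof of Theorem~\ref{funct3, theo: supportopens}; and \eqref{def:ieq0} from $J_0$ being an isomorphism, which identifies $\rs(\cov\opens)$ with $\rs(\opens(G))$ via $j$.

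\textit{Main obstacle.} The principal difficulty lies in reconciling the two descriptions of the $\cov\opens$-action on $\opens$. In the forward direction the bimodule action is part of the data and one must see that it agrees with the action produced abstractly from $J$ by Lemma~\ref{lemma: actionofbisectionsonG}; in the backward direction the roles are reversed. A uniqueness argument based on the compatibility of the action with $j$ (equivariance restricted to $\ipi(\cov\opens)$) should close this gap. Secondarily, the first implication requires a piecewise verification that $J$ is a functor: each of the axioms in Definition~\ref{def:ieqf} was designed to capture exactly one functoriality axiom, and the bookkeeping --- especially around Definition~\ref{def:ieqf}\eqref{def:ieq3} and the reduced comultiplications --- must be spelled out carefully.
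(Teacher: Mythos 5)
Your proposal follows essentially the same route as the paper: both directions reduce to the duality between groupoid quantales and open groupoids via Theorem~\ref{funct3, theo: supportopens}, with $J_0$ obtained from Lemma~\ref{lemma:j'isiso}, the action of $\cov\opens$ on $\opens$ supplied by Lemma~\ref{lemma: actionofbisectionsonG}, and each axiom of Definition~\ref{def:ieqf} matched one-for-one against the functoriality axioms for $J$ and the conditions of Definition~\ref{def: etalecoveredgroupoid} exactly as in the paper's proof (the paper cites \cite{Re15}*{Prop.\ 1.3} for the functoriality check and uses $\upsilon=\cov\upsilon\circ j$ for the unit axiom, but the bookkeeping is the same). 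The reconciliation of the two descriptions of the action that you flag as the main obstacle is handled in the paper by building that identification into Definition~\ref{def: etalecoveredgroupoid}\eqref{def: etalecoveredgroupoid2} itself, so no further argument is needed there.
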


\begin{proof}
Let us suppose that $\opens$ is an inverse-embedded quantal frame with base locale $B$. By Theorem~\ref{funct3, theo: supportopens} $\opens$ is a groupoid quantale. Let us denote by $G=\mathcal{G}(\opens)$ and $\cov G=\mathcal{G}(\cov\opens)$ the open groupoid and the \'etale groupoid of $\opens$ and $\cov \opens$, respectively. There exists a frame monomorphism $j:\opens \to \cov \opens$ satisfying the conditions of Definition~\ref{def:ieqf}. The latter implies that there exists an epimorphic functor of groupoids $J:\cov G\to G$ such that $J_0$ is the canonical isomorphism with $J^{*}=j$, by Lemma~\ref{lemma:j'isiso}. Clearly, $J_1$ is an epimorphism because $j$ is injective and it is a functor because it satisfies the conditions of \cite{Re15}*{Prop.\ 1.3}, as follows:
\begin{itemize}
\item $J_1\circ \cov i=i\circ J_1$ because $j$ satisfies Definition~\ref{def:ieqf}\eqref{def:ieq1},
\item $J_1\circ \cov u=u$ because $\upsilon=\cov\upsilon\circ j$,
\item $d\circ J_1=\cov d$ because $j$ is a $B$-$B$-bimodule homomorphism,
\item $\cov m\circ (J_1\times J_1)\leq J_1\circ m$ because $j$ satisfies Definition~\ref{def:ieqf}\eqref{def:ieq3}.   
\end{itemize} 
Finally, $J_1\times \ident_{G_1}$ is a surjective map of locales because $j$ satisfies \eqref{def:ieq2}, and $J^*$ is $\ipi(\cov G)$-equivariant because $j$ is an $\cov\opens$-$\cov\opens$-bimodule homomorphism. Therefore, $G$ is an \'etale-covered groupoid. Conversely, let us suppose that $G$ is an \'etale-covered groupoid, and let us denote by $\opens=\opens(G)$ and $Q=\opens(\cov G)$ the groupoid quantale and the inverse quantal frame of $G$ and $\cov G$, respectively. By Lemma~\ref{lemma: actionofbisectionsonG}, $\opens$ is a (left) $Q$-module with action $(s,q)\mapsto s\cdot q$. Notice that the involution of $Q$ makes $\opens$ be a (right) $Q$-module as well, by putting $q\cdot s:=s^*\cdot q$. Therefore $\opens$ is a $Q$-$Q$-bimodule. It is straightforward to see that in fact $\opens$ is an involutive $Q$-$Q$-quantale. Let us verify that $\opens$ verifies the axioms of Definition~\ref{def:ieqf}. In order to do this, notice that $J^*:\opens \to Q$ defines a frame monomorphism which is also a homomorphism of $Q$-$Q$-bimodules. The latter holds because, by assumption, $J^*$ is $\ipi(\opens(\cov G))$-equivariant. Furthermore, it satisfies the various conditions of Definition~\ref{def:ieqf}:
\begin{description}
\item{\eqref{def:ieq1}} because $J^*\circ i=\cov i\circ J^*$, since $J$ is a functor;
\item{\eqref{def:ieq2}} because $J_1\times \ident_{G_1}: \cov G_1\times_{G_0} G_1\to G_1\times_{G_0} G_1$ is a surjective map of locales;
\item{\eqref{def:ieq3}} because $(J^*\otimes J^*)\circ \cov m^*=m^*\circ J^*$,  since $J$ is a functor;
\item{\eqref{def:ieq4}} because $\opens$ is a groupoid quantale, and therefore it satisfies the inverse laws; that is,
\[
\lres{\upsilon(a)}{1_{\opens}}=\V_{\substack{x\in \opens\\ xx^*\le a}} x,
\]
which implies that for all $a\in \opens$ we have
\[
(J^*(a)\wedge e)1_Q=(\cov u^*\circ J^*)(a)1_Q =\V_{\substack{x\in \opens\\ xx^*\le a}} J^*(x),
\]
the latter because $J$ is a functor; 
\item{\eqref{def:ieq0}} because $J_0$ is an isomorphism. 
\end{description} 
This proves that $\opens$ is an inverse-embedded quantal frame. \qed
\end{proof}

%%%%%%%%%%%%%%%%%%%%%%%%%%%%%%%%%%%%%
%%%%%%%%%%%%%%%%%%%%%%%%%%%%%%%%%%%%%
%%%%%%%%%%%%%%%%%%%%%%%%%%%%%%%%%%%%%
%%%%%%%%%%%%%%%%%%%%%%%%%%%%%%%%%%%%%

\section{Actions}\label{section:actionsetalecoveredgprds}

Let us now see that an appropriate notion of action for inverse-embedded quantal frames yields an equivalence of categories $G$-$\Loc\cong \opens$-$\Loc$ where $\opens=\opens(G)$ is the quantale of an \'etale-covered groupoid $G$. Then we obtain the two main applications of this paper, namely (i)~a definition of sheaf for inverse-embedded quantal frames that extends that of \cite{GSQS} for \'etale groupoids and completely characterizes the sheaves of \'etale-covered groupoids; and (ii)~an extension of the functoriality results of \cite{Re15}.

%%%%%%%%%%%%%%%%%%%%%%%%%%%%%%%%%%%%%
%%%%%%%%%%%%%%%%%%%%%%%%%%%%%%%%%%%%%

\subsection{Descent actions}
 
\begin{lemma}
Let $G$ be an \'etale-covered groupoid. Then any $G$-locale $X$ is an $\opens(\cov{G})$-locale.  
\end{lemma}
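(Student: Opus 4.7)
The statement says that any $G$-locale inherits an $\opens(\cov G)$-module structure. My plan is to reduce this to two tools already available in the paper: Lemma~\ref{extension} (which lifts $G$-actions to actions of any covering groupoid) and the known isomorphism between the category of $\cov G$-locales and the category of $\opens(\cov G)$-locales that holds whenever $\cov G$ is étale (this is the isomorphism invoked in the \textbf{Sheaves} paragraph of the preliminaries, from \cite{GSQS}*{Th.\ 3.21}).

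Concretely, given a $G$-locale $(X,p,\act)$, I will first note that by Definition~\ref{def: etalecoveredgroupoid}\eqref{def: etalecoveredgroupoid1} the functor $J:\cov G\to G$ is an epimorphism that is the identity on objects, so $\cov G$ covers $G$ in the sense of Definition~\ref{cover}. Hence Lemma~\ref{extension} applies and produces a $\cov G$-action $(X,q,\bct)$ with $q=J_0^{-1}\circ p$ and $\bct=\act\circ(J_1\times\ident_X)$. At this point $X$ is an honest $\cov G$-locale.

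Because $\cov G$ is \'etale, the standard transfer of actions to quantale modules converts this $\cov G$-action into an $\opens(\cov G)$-module action on $X$; the multiplication map is the sup-lattice homomorphism
\[
\opens(\cov G)\otimes X \twoheadrightarrow \opens(\cov G)\otimes_{\opens(\cov G_0)} X \xrightarrow{\ \bct_!\ } X
\]
as recalled in the \textbf{Actions of open groupoids} paragraph, and formulas \eqref{eq:rightadjformula1}--\eqref{eq:rightadjformula2} (the latter using that $\cov G$ is \'etale) show that the resulting right adjoint preserves joins, which gives $X$ the structure of an $\opens(\cov G)$-locale.

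I do not expect real obstacles here: the work reduces to quoting Lemma~\ref{extension} and the étale case of \cite{GSQS}*{Th.\ 3.21}. The only point that requires a brief explicit verification is that the lifted anchor $q=J_0^{-1}\circ p$ is well defined as a map of locales, which is immediate because $J_0$ is an isomorphism by Definition~\ref{cover}.
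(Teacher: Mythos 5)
Your proposal is correct and follows essentially the same route as the paper: lift the $G$-action to a $\cov G$-action via Lemma~\ref{extension} and then invoke the isomorphism between $\cov G$-$\Loc$ and $\opens(\cov G)$-$\Loc$ for the \'etale groupoid $\cov G$. The only cosmetic difference is the precise citation from \cite{GSQS}, which does not affect the argument.
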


\begin{proof}
By Lemma~\ref{extension} any $G$-action $X$ can be extended to a $\cov{G}$-action. Then, taking into account that $\cov G$-$\Loc$ is isomorphic to $\opens(\cov G)$-$\Loc$ (\cf\ \cite{GSQS}*{Lemma 3.8, Lemma 3.19}), we conclude that $X$ is also an $\opens(\cov{G})$-locale. \qed
\end{proof}

In the context of rings, the notion of descent theory of modules can be described briefly. When $R$ and $S$ are two commutative rings with unit connected by a homomorphism $f:R\to S$, there is an obvious way of viewing an $S$-module as an $R$-module. Moreover, given an $R$-module $N$, there is a natural associated $S$-module, $N\otimes_R S$, which is called the $S$-module induced by $N$. This association of $S$-modules with $R$-modules is an expression of an adjointness relation which is a common topic to many algebraic constructions. The latter inspired us to define the following notion of descent for \'etale-covered groupoids:

\begin{definition}\label{descend}
Let $G$ be an \'etale-covered groupoid. A $\cov{G}$-action $(X,p,\cov{\act})$ satisfies the \emph{descent condition} if there exists a map of locales $\bct: G_1\times_{G_0} X\to X$ such that the following diagram:
\[
\xymatrix{
\cov{G}_1\times_{G_0} X \ar[rr]^-{\cov{\act}}  \ar@{->>}_{J_1\times \ident_X}[d] 
&& X \\
G_1\times_{G_0} X\ar[urr]_-{\bct}  }
\]
commutes in \Loc.
\end{definition}

\begin{lemma}\label{lemmadescend}
Let $G$ be an \'etale-covered groupoid, and let $(X,p,\cov{\act})$ be a $\cov G$-action that satisfies the descent condition. Then $(X,p,\bct)$ is a $G$-action.
\end{lemma}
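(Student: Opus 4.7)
The plan is to verify the three axioms of a $G$-action for $(X,p,\bct)$---the anchor condition, unitarity, and associativity---by precomposing each desired identity with a suitable epimorphism of locales involving $J_1$, thereby reducing to the corresponding axiom for the $\cov G$-action $(X,p,\cov\act)$ together with functoriality of $J$.

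First I would note that $J_1\colon\cov G_1\to G_1$ is an epimorphism in $\Loc$, since $J$ is an epimorphic functor whose object component $J_0$ is an isomorphism. Because surjections of locales are pullback-stable, the maps $J_1\times\ident_X$, $\ident_{G_1}\times(J_1\times\ident_X)$, and $J_1\times J_1\times\ident_X$ are all epimorphisms; this justifies cancellation on the right in each verification that follows. Note that $J_1\times\ident_X$ is precisely the epi that the descent condition asserts $\cov\act$ factors through.

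For the anchor condition I would precompose $p\circ\bct$ and $r\circ\pi_1$ with $J_1\times\ident_X$. By the descent equation the left side becomes $p\circ\cov\act$, which by the $\cov G$-anchor condition equals $\cov r\circ\pi_1$; the right side becomes $r\circ J_1\circ\pi_1$, which equals $\cov r\circ\pi_1$ by the functorial identity $r\circ J_1=\cov r$ (after identifying $G_0$ and $\cov G_0$ via $J_0$). Cancellation of the epi yields the result. Unitarity is similar: from the functorial identity $u=J_1\circ\cov u$ one obtains $\langle u\circ p,\ident_X\rangle = (J_1\times\ident_X)\circ\langle\cov u\circ p,\ident_X\rangle$, so $\bct\circ\langle u\circ p,\ident_X\rangle = \cov\act\circ\langle\cov u\circ p,\ident_X\rangle = \ident_X$ by the $\cov G$-unit axiom.

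The main obstacle is associativity, because the descent equation must be chased through a double composition. To prove $\bct\circ(m\times\ident_X) = \bct\circ(\ident_{G_1}\times\bct)$, I would precompose both sides with the epimorphism $J_1\times J_1\times\ident_X$. The left side reduces, via the functorial identity $m\circ(J_1\times J_1)=J_1\circ\cov m$ and the descent equation, to $\cov\act\circ(\cov m\times\ident_X)$. The right side requires two uses of the descent equation: first on the inner $\bct$, via the factorization $(\ident_{G_1}\times\bct)\circ(J_1\times J_1\times\ident_X) = (J_1\times\ident_X)\circ(\ident_{\cov G_1}\times\cov\act)$, and then on the outer $\bct$, yielding $\cov\act\circ(\ident_{\cov G_1}\times\cov\act)$. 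Associativity of $\cov\act$ now equates the two sides, and cancellation of the epimorphism concludes the proof.
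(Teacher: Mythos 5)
Your proof is correct and follows essentially the same route as the paper's: all three action axioms for $\bct$ are deduced from those of $\cov\act$ by precomposing with the relevant $J_1\times\cdots$ maps, using the descent equation $\cov\act=\bct\circ(J_1\times\ident_X)$ and cancelling the epimorphism (your explicit two-step treatment of associativity is exactly what the paper's superimposed diagram encodes). One caveat: your justification that these maps are epi because ``surjections of locales are pullback-stable'' is not valid for arbitrary locale surjections (it holds, e.g., for open surjections); the paper instead builds the epimorphy of $J_1\times\ident_X$ into the descent condition (Definition~\ref{descend}) and postulates that of $J_1\times\ident_{G_1}$ in Definition~\ref{def: etalecoveredgroupoid}, so you should appeal to those hypotheses rather than to pullback-stability.
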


\begin{proof}
Let $(X,p,\cov{\act})$ be a $\cov{G}$-action that satisfies the descent condition. Let us show that in fact $(X, p, \bct)$ is a $G$-action by verifying all the axioms, similarly to what we did in Lemma~\ref{extension}:
\begin{enumerate} 
\item \textbf{Pullback:}
\[
\vcenter{ \xymatrix{
\cov{G}_1\times_{G_0} X \ar[dr]_{J_1\times \ident_X}  \ar[ddd]_{\act} \ar[rrrr]^-{\cov{\pi}_1} &&&& \cov{G}_1 \ar[dl]_{J} \ar[ddd]^{\cov{d}} \\ 
& G_1\times_{G_0} X   \ar[d]^{\bct} \ar[rr]^-{\pi_1} && G_1  \ar[d]^-{d}\\
& X \ar@{=}[dl]  \ar[rr]_-{p} && G_0 \ar@{=}[dr] \\
 X \ar[rrrr]_{p} &&&& G_0. }
}
\]
\item \textbf{Associativity:}
\[
\vcenter{ \small\xymatrix{
& \cov{G}_1\times (\cov{G}_1\times_{G_0} X) \ar[dl]_-{J_1\times(J_1\times \ident_X)} \ar@{.>}[dd]  \ar[rrr]_-{\ident_{\cov{G}_1}\times \act} &&&  \cov{G}_1\times_{G_0} X \ar@{=}[dd]  \ar[dll]^{J_1\times \ident_X}\\
G_1\times (G_1\times_{G_0} X) \ar[d]_{\cong}  \ar[rr]^-{\ident_{G_1}\times \bct} &&  G_1\times_{G_0} X  \ar[dd]^{\bct} \\
G_2\times_{G_0} X \ar[d]_{m_G\times \ident_X} & \cov{G}_2 \times_{G_0} \ar@{.>}[rrr]^-{m_{\cov{G}}\times \ident_X} X &&& \cov{G}_1\times_{G_0} X \ar[dll]_{\act} \\  
G_1\times_{G_0} X \ar@{.>}[urrrr]^{J\times \ident_X} \ar[rr]_{\bct}   && X.}
}
\]
\item\textbf{Unitarity:}
\[
\vcenter{\xymatrix{
 & \cov{G}_1\times_{G_0} X \ar[d]_{J_1\times \ident_X} \ar@/^2pc/[rdd]^{\act} \\
 & G_1\times_{G_0} X  \ar[dr]_{\bct} \\
 X \ar@/^2pc/[uur]^{\langle u\circ p,\ident_X \rangle} \ar@{=}[rr] \ar[ur]_{\langle u\circ q,\ident_X \rangle} && X.
}
}
\]
\end{enumerate}
Since $\cov{\act}$ is a $\cov{G}$-action such that $\cov \act = \bct\circ (J_1\times \ident_X)$ due to the descent condition, all the above diagrams commute. Therefore $(X,q,\bct)$ is a $G$-action. \qed
\end{proof}

%%%%%%%%%%%%%%%%%%%%%%%%%%%%%%%%%%%%%
%%%%%%%%%%%%%%%%%%%%%%%%%%%%%%%%%%%%%

\subsection{Categories of actions}\label{funct3, subsec: open-loccat}

\begin{definition}\label{oq-loc}
Let $\opens$ be an inverse-embedded quantal frame. By an \emph{$\opens$-locale} is meant an $\cov \opens$-locale  $X$ (with action $\alpha$) such that $\alpha_*$ factors (necessarily uniquely) through $j\otimes\ident_X$ in \Frm. The category $\opens$-\Loc\ consists of $\opens$-locales as objects, and the morphisms are the maps of locales whose inverse images are homomorphisms of left $\cov \opens$-modules.
\end{definition}

Clearly, the category $\opens$-\Loc\ is a full subcategory of $\cov \opens$-\Loc.

\begin{example}
Any inverse-embedded quantal frame $\opens$ is an $\opens$-locale. Indeed, let us write $f:\cov \opens\otimes \opens\to \opens$ for the action of $\cov\opens$ on $\opens$. By \cite{GSQS}*{Lemma 3.15} its right adjoint can be written as 
\[
f_*(x)=\V_{s\in \ipi(\cov \opens)}s\otimes s^*\cdot x,
\]  
and it is a frame homomorphism. Moreover, for all $x\in \opens$ we have
\begin{align*}
(\ident_{\cov\opens}\otimes j)\circ f_*(x) &= (\ident_{\cov\opens}\otimes j)(\V_{s\in \ipi(\cov \opens)}s\otimes s^*\cdot x)\\
&= \V_{s\in \ipi(\cov \opens)}s\otimes j(s^*\cdot x)\\
&= \V_{s\in \ipi(\cov \opens)}s\otimes s^*j(x) \quad\quad \begin{minipage}{2cm}\text{($j$ is an $\cov\opens$-$\cov\opens$-bimodule}\\
\text{homomorphism)}\end{minipage}\\
&= \cov\mu^*(j(x)),
\end{align*} 
where the last equality follows from \eqref{funct3, eq: multiplicativityopens}. Finally, let us prove that $f_*$ factors (uniquely) through $j\otimes \ident_{\opens}$ in $\Frm$. In order to do this, let us notice that
\begin{align*} 
(\ident_{\cov\opens}\otimes j)\circ(j\otimes \ident_{\opens})\circ\mu^*&=(j\otimes j)\circ \mu^*\\
&= \cov\mu^*\circ j& \text{[by Definition~\ref{def:ieqf}\eqref{def:ieq3}]}\\
&=(\ident_{\cov\opens}\otimes j)\circ f_*.
\end{align*}
Then $(j\otimes \ident_{\opens})\circ\mu^*=f_*$ because $\ident_{\cov\opens}\otimes j$ is monic. Therefore $\opens$ is an $\opens$-locale. 
\end{example}

\begin{lemma}\label{lemma:bijection, qoloc}
Let $G$ be an \'etale-covered groupoid. The assignment $X\mapsto \opens(X)$ from $G$-$\Loc$ to $\opens(G)$-$\Loc$ is a bijection up to isomorphisms.
\end{lemma}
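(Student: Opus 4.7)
The plan is to show that the factorization condition in Definition~\ref{oq-loc} is precisely the algebraic reflection of the descent condition in Definition~\ref{descend}, after which the lemma follows from Lemma~\ref{lemmadescend} and the isomorphism $\cov G$-$\Loc\cong\opens(\cov G)$-$\Loc$. I would work entirely at the level of frames/sup-lattices, exploiting that the action of a $\cov G$-locale $Y$ corresponds in $\Frm$ to $\cov\act^*:\opens(Y)\to\opens(\cov G)\otimes_B\opens(Y)$, which is exactly the right adjoint of the $\cov\opens$-action on $\opens(Y)$.

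For the forward direction, given a $G$-locale $(X,p,\act)$, lift to a $\cov G$-action $\cov\act=\act\circ(J_1\times\ident_X)$ via Lemma~\ref{extension}. Then $\opens(X)$ becomes an $\opens(\cov G)$-locale, and taking inverse images gives $\cov\act^*=(J_1\times\ident_X)^*\circ\act^*$. Translating $(J_1\times\ident_X)^*$ into $\Frm$, this reads $\cov\alpha_*=(j\otimes\ident_{\opens(X)})\circ\alpha_*$, so $\alpha_*$ is the required factorization; hence $\opens(X)$ is an $\opens$-locale in the sense of Definition~\ref{oq-loc}. For the reverse direction, given an $\opens$-locale $Y$, the $\opens(\cov G)$-action corresponds under $\opens(\cov G)$-$\Loc\cong\cov G$-$\Loc$ to a $\cov G$-locale $X$ with $\opens(X)=Y$. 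The factorization $\alpha_*=(j\otimes\ident_Y)\circ\beta^*$ dualizes in $\Loc$ to $\cov\act=\bct\circ(J_1\times\ident_X)$, i.e.\ precisely the descent condition of Definition~\ref{descend}. Then Lemma~\ref{lemmadescend} produces a $G$-action on $X$ whose associated $\opens$-locale is $Y$.

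For the bijection-up-to-isomorphism statement, observe that the factorization $\beta^*$ (equivalently $\bct$) is unique because $j\otimes\ident_{\opens(X)}$ is a frame monomorphism by Definition~\ref{def:ieqf}\eqref{def:ieq2}; dually $J_1\times\ident_X$ is an epimorphism of locales (this is guaranteed by Definition~\ref{def: etalecoveredgroupoid}\eqref{def: etalecoveredgroupoid3} applied to $X$ in place of $G_1$, after observing that the argument carries over to arbitrary pullbacks along $p$). Consequently the two constructions are mutually inverse on the nose: starting from a $G$-locale $X$, lifting to $\cov G$ and then reapplying descent returns the original $G$-action because the descended action is uniquely determined by $\cov\act$ and $J_1\times\ident_X$. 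Conversely, starting from an $\opens$-locale, lifting the induced $G$-action back to $\cov G$ recovers the original $\cov G$-action since $\cov\act=\bct\circ(J_1\times\ident_X)$ by construction. An isomorphism of $\opens$-locales is an isomorphism of underlying $\cov\opens$-locales, hence of $\cov G$-locales; the uniqueness of descent then forces it to be $G$-equivariant, giving the isomorphism-level bijection.

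The main obstacle I anticipate is the unique descent step: one must be careful that $J_1\times\ident_X$ is genuinely epic in $\Loc$ for an arbitrary $G$-locale $X$, not merely for $X=G_1$. I would handle this by factoring $J_1\times\ident_X$ through the pullback along $p:X\to G_0$ and invoking stability of the epimorphism $J_1\times\ident_{G_1}$ under pullback along $p$ (or, equivalently, the flatness of $\opens(X)$ as a $B$-module combined with the monomorphism property of $j\otimes\ident$ from Definition~\ref{def:ieqf}\eqref{def:ieq2}), which ensures that $j\otimes\ident_{\opens(X)}:\opens(G)\otimes_B\opens(X)\to\opens(\cov G)\otimes_B\opens(X)$ remains monic and hence the $\Frm$-factorization is unique.
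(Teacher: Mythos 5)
Your proposal is correct and follows essentially the same route as the paper: the proof there likewise identifies the factorization of $\alpha_*$ through $j\otimes\ident_X$ in $\Frm$ with the descent condition in $\Loc$, using Lemma~\ref{extension} to lift a $G$-action to $\cov G$ and Lemma~\ref{lemmadescend} to descend back. The uniqueness/epimorphism issues you flag (whether $J_1\times\ident_X$ is epic, equivalently $j\otimes\ident_X$ monic, for arbitrary $X$) are treated in the paper exactly as implicitly as in your sketch, so your account matches its level of detail.
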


\begin{proof}
Let $\cov \opens=\opens(\cov{G})$ and $\opens=\opens(G)$. Let $(X,p,\cov{\act})$ be a $\cov{G}$-action satisfying the descent condition. Then there exists a map of locales $\bct: G_1\times_{G_0} X\to X$ such that the following diagram is commutative:
\[
\xymatrix{
\cov{G}_1\times_{G_0} X \ar[rr]^-{\cov{\act}}  \ar@{->>}_{J_1\times \ident_X}[d] 
&& X \\
G_1\times_{G_0} X.\ar[urr]_-{\bct} }
\]
Moreover $(X,p,\bct)$ is a $G$-action due to Lemma~\ref{lemmadescend}. Now, if we consider the inverse image of the above maps, we obtain the following commutative diagram in $\Frm$, where $B$ is the base locale:
\[
\xymatrix{
\cov \opens \otimes_B X   
&& X \ar[ll]_-{\cov{\act}^*}\ar[lld]^-{{\bct}^*} \\
\opens \otimes_B X. \ar@{^{(}->}^{j\otimes \ident_X}[u]  }
\]
This, in terms of frames, means that the action $\alpha: \cov \opens\otimes_{B} X\to X$ of $\cov \opens$ on $X$, which has a join preserving right adjoint $\alpha_*:\cov \opens\otimes_{B} X\to X$  (this is the inverse image $\cov{\act}^{*}$ of the action $\cov{\act}: \cov{G}_1\times_{G_0} X\to X$), factors (necessarily uniquely) through $j\otimes\ident_X$ in $\Frm$. Hence, $X$ is an $\opens$-locale. Conversely, let $X$ be an $\opens$-locale. Then there exists a frame homomorphism $\beta_*:\opens\otimes_B X\to X$ such that the following diagram commutes:
\[
\xymatrix{
\cov\opens\otimes_B X  && X \ar[ll]_-{\alpha_*}\ar[lld]^-{\beta_*} \\
\opens \otimes_B X. \ar@{^{(}->}^{j\otimes \ident_X}[u]  }
\]
In terms of locales this means that there exists a map of locales $\bct: G_1\times_{G_0} X\to X$ such that the diagram
\[
\xymatrix{
\cov{G}_1\times_{G_0} X \ar[rr]^-{\cov{\act}}  \ar@{->>}_{J_1\times \ident_X}[d] 
&& X  \\
G_1\times_{G_0} X\ar[urr]_-{\bct}  }
\]
is commutative in $\Loc$. So $(X,p,\cov{\act})$ satisfies the descent condition. \qed 
\end{proof}

\begin{remark}\label{remark:opensmodules}
Note that $\opens$-locales are also $\opens$-modules in the usual sense. This is because, following the proof of Lemma~\ref{lemma:bijection, qoloc}, the factorization of the inverse image $\alpha_*$ of the action $\alpha:\cov\opens\otimes_B X\to X$ is done via a frame homomorphism $\beta:X\to\opens\otimes_B X$ which turns out to be the inverse image of a groupoid action $\bct: G_1 \times_{G_0} X\to X$, and thus the left adjoint of $\beta$ is $\bct_!$. Hence, $X$ is an $\opens$-module.
\end{remark}

\begin{lemma}\label{lemma:j(aleqax)}
Let $\opens$ be an inverse-embedded quantal frame with base locale $B$, and let $X$ be an $\opens$-locale. Then, for all $a\in \opens$ and $x\in X$, we have
\[
j(a)x\le ax.
\]
\end{lemma}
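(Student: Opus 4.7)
The plan is to exploit the descent condition for $\opens$-locales, together with the unit of the $\sup$-lattice adjunction between the $\opens$-action and its inverse image. Since $X$ is an $\opens$-locale, the inverse image $\alpha_*:X\to\cov\opens\otimes_B X$ of the $\cov\opens$-action $\alpha$ factors in $\Frm$ through $j\otimes\ident_X$, say as $\alpha_*=(j\otimes\ident_X)\circ\beta_*$ for a (necessarily unique) frame homomorphism $\beta_*:X\to\opens\otimes_B X$. By Remark~\ref{remark:opensmodules}, the left adjoint $\beta:\opens\otimes_B X\to X$ of $\beta_*$ is precisely the $\opens$-module action, so $ax=\beta(a\otimes x)$ while $j(a)x=\alpha(j(a)\otimes x)$.

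The proof then consists of the following short sequence of steps. First, apply the unit of the adjunction $\beta\dashv\beta_*$ at the element $a\otimes x$ to get $a\otimes x\le \beta_*(\beta(a\otimes x))=\beta_*(ax)$. Second, apply the monotone map $j\otimes\ident_X$ to both sides, obtaining
\[
j(a)\otimes x=(j\otimes\ident_X)(a\otimes x)\le(j\otimes\ident_X)(\beta_*(ax))=\alpha_*(ax),
\]
where the last equality uses the descent factorization $\alpha_*=(j\otimes\ident_X)\circ\beta_*$. Third, transpose along the adjunction $\alpha\dashv\alpha_*$: the inequality $j(a)\otimes x\le\alpha_*(ax)$ is equivalent to $\alpha(j(a)\otimes x)\le ax$, i.e.\ $j(a)x\le ax$, which is the conclusion.

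I do not expect any real obstacle here; the main conceptual point is simply to keep the two module structures on $X$ (as a $\cov\opens$-locale and as an $\opens$-module) clearly separated in notation and to recognise that the $\opens$-structure was defined precisely so that $\beta_*$ and $\alpha_*$ are linked by $j\otimes\ident_X$. The only place where one must pay attention is the well-definedness of $\beta$, which is guaranteed by Remark~\ref{remark:opensmodules}; once this is in hand the argument reduces to a standard unit-of-adjunction computation.
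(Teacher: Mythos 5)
Your proof is correct and is essentially the paper's own argument: both rest on the unit of $\beta\dashv\beta_*$, the descent factorization $\alpha_*=(j\otimes\ident_X)\circ\beta_*$, and the adjunction $\alpha\dashv\alpha_*$ (the paper applies $\alpha$ and then the counit $\alpha\circ\alpha_*\le\ident$, which is just the unfolded form of your transposition step). No issues.
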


\begin{proof}
By Remark~\ref{remark:opensmodules}, we know that $X$ is an $\opens$-module with action $\beta:\opens\otimes_B X\to X$. Therefore,
\begin{align*}
j(a)x &= \alpha(j(a)\otimes x) = \alpha((j\otimes \ident_X)(a\otimes x))& \text{($\alpha$ is the action of $\cov\opens $ on $X$)}\\
&\leq \alpha((j\otimes \ident_X)(\beta_*(\beta(a\otimes x)))& \text{(because $\beta_*\circ \beta \geq \ident$)}\\
&= \alpha\circ \alpha_*\circ \beta_!(a\otimes x)&\text{(by the descent condition)}\\
&\leq \beta(a\otimes x)=ax & \text{(because $\alpha\circ \alpha_*\leq \ident$)}. \qed
\end{align*}
\end{proof}

\begin{theorem}\label{funct3, theo: gloccongqoloc}
Let $G$ be an \'etale-covered groupoid. Then the categories $G$-$\Loc$ and $\opens$-$\Loc$ are equivalent.
\end{theorem}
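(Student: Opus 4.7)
The strategy is to exhibit mutually quasi-inverse functors $F:G$-$\Loc\to\opens$-$\Loc$ and $H:\opens$-$\Loc\to G$-$\Loc$, using Lemma~\ref{lemma:bijection, qoloc} on objects and exploiting the known isomorphism $\cov G$-$\Loc\cong\cov\opens$-$\Loc$ from~\cite{GSQS} on morphisms. The functor $F$ sends a $G$-action $(X,p,\act)$ to its lift $(X,p,\cov\act)$ with $\cov\act=\act\circ(J_1\times\ident_X)$ as constructed in Lemma~\ref{extension}; viewed as a $\cov\opens$-locale it automatically satisfies the descent condition of Definition~\ref{descend}, and hence qualifies as an $\opens$-locale. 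Conversely, $H$ takes an $\opens$-locale $X$ with $\cov\opens$-action $\alpha$, applies the factorization $\alpha_*=(j\otimes\ident_X)\circ\beta_*$ from Definition~\ref{oq-loc}, and then uses Lemma~\ref{lemmadescend} to recover a $G$-action.

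For morphisms, $F$ is immediate: a $G$-equivariant $f:X\to Y$ automatically satisfies $f\circ\cov\act_X=\cov\act_Y\circ(\ident_{\cov G_1}\times f)$ (since $\cov\act=\act\circ(J_1\times\ident)$), so $f^*$ is a $\cov\opens$-module homomorphism in the sense of Definition~\ref{oq-loc}. The essential content of the theorem lies in the converse direction: one must show that any $\cov\opens$-module homomorphism $f^*:Y\to X$ between $\opens$-locales corresponds to a $G$-equivariant map of the underlying $G$-actions. Translating $\cov\opens$-equivariance through the descent equation $\alpha_*=(j\otimes\ident_X)\circ\beta_*$ yields
\[
(j\otimes\ident_X)\circ(\ident_\opens\otimes f^*)\circ\beta_{Y*} = (j\otimes\ident_X)\circ\beta_{X*}\circ f^*,
\]
so for the $G$-equivariance of $f$ to follow one needs to cancel $j\otimes\ident_X$; equivalently, one needs $J_1\times\ident_X:\cov G_1\times_{G_0}X\to G_1\times_{G_0}X$ to be an epimorphism of locales.

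The main obstacle is thus establishing this cancellability for arbitrary $\opens$-locales $X$, extending Definition~\ref{def: etalecoveredgroupoid}(\ref{def: etalecoveredgroupoid3}), which provides only the case $X=G_1$. The planned approach is to realize $J_1\times\ident_X$ as a pullback of $J_1$ along the first projection $\pi_1:G_1\times_{G_0}X\to G_1$ and combine the epicness of $J_1$ from Definition~\ref{def: etalecoveredgroupoid}(\ref{def: etalecoveredgroupoid1}) with the presence of the unit section $\langle u\circ p,\ident_X\rangle$ of the action and the uniqueness of the descent factorization $\beta_*$, which already encodes the required monicness of $j\otimes\ident_X$ on the image of $\beta_*$. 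Once this cancellation is in place, $F$ and $H$ are quasi-inverse: essential surjectivity on both sides is supplied by Lemma~\ref{lemma:bijection, qoloc}, faithfulness is clear since morphisms on both sides are maps of underlying locales, and fullness is the morphism correspondence just established, completing the equivalence $G$-$\Loc\simeq\opens$-$\Loc$.
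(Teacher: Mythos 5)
Your proposal follows essentially the same route as the paper's proof: the object correspondence via Lemma~\ref{lemma:bijection, qoloc}, the forward direction on morphisms by observing that $G$-equivariance of $f$ gives $\cov G$-equivariance of the lifted actions $\cov\act=\act\circ(J_1\times\ident)$ and hence $\cov\opens$-equivariance of $f^*$, and the converse by cancelling $J_1\times\ident_X$ (the paper works at the locale level, you at the frame level with $j\otimes\ident_X$; these are the same cancellation). The one place you diverge is the step you yourself flag as the main obstacle, and there your proposal is not yet a proof: you leave the epicness of $J_1\times\ident_X:\cov G_1\times_{G_0}X\to G_1\times_{G_0}X$ for an \emph{arbitrary} $G$-locale $X$ as a ``planned approach,'' and the first idea in that plan --- realizing $J_1\times\ident_X$ as a pullback of $J_1$ along $\pi_1$ and importing epicness from $J_1$ --- cannot work on its own, because epimorphisms of locales are not stable under pullback (this failure is precisely why condition~\eqref{def: etalecoveredgroupoid3} of Definition~\ref{def: etalecoveredgroupoid} is imposed as a separate axiom rather than deduced from surjectivity of $J_1$). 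The paper itself disposes of this point by directly invoking Definition~\ref{def: etalecoveredgroupoid}\eqref{def: etalecoveredgroupoid3}, and indeed already displays $J_1\times\ident_X$ as an epimorphism in the descent diagram of Definition~\ref{descend}; so if you want a self-contained argument rather than an appeal to that reading of the axiom, you would need to actually establish injectivity of $j\otimes\ident_X$ in $\Frm$ for the relevant $X$ (e.g.\ via a flatness argument over the base locale), not merely gesture at the unit section and the uniqueness of the factorization $\beta_*$. Everything else --- fullness, faithfulness, and essential surjectivity of your functors $F$ and $H$ --- is handled as in the paper.
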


\begin{proof}
Let $G$ be an \'etale-covered groupoid. The assignment $X\mapsto \opens(X)$ from $G$-$\Loc$ to $\opens$-$\Loc$ is a bijection due to Lemma~\ref{lemma:bijection, qoloc}. Now let $X$ and $Y$ be arbitrary $G$-locales, and let $f:X\to Y$ be a $G$-equivariant map. Let us prove that $f^*:\opens(Y)\to \opens(X)$ is a homomorphism of $\cov \opens$-locales. Recall that the categories $\cov{G}$-$\Loc$ and $\cov \opens$-$\Loc$ are equivalent \cite{GSQS}*{Lemma 3.19}. Then it suffices to show that $f$ is a $\cov{G}$-equivariant map. Indeed, let us consider the following diagram:
\[
\xymatrix{
\cov{G}_1\times_{G_0}X \ar^{\ident_{\cov{G}_1}\times f}[rr] \ar^{J_1\times \ident_X}[d] \ar@/_3pc/[dd]_{\cov{\act}} && \cov{G}_1\times_{G_0}Y  \ar_{J_1\times \ident_Y}[d]\ar@/^3pc/[dd]^{\cov{\bct}} \\
G_1\times_{G_0}X \ar_{\act}[d] \ar_{\ident_{G_1}\times f}[rr]&& G_1\times_{G_0}Y \ar^{\bct}[d]\\
X \ar_{p}[dr] \ar^{f}[rr] && Y\ar^{q}[dl]\\
& G_0.
}
\]
where $\cov \act = \act\circ (J_1\times \ident_X)$ and $\cov \bct =  \bct\circ (J_1\times \ident_X)$. Clearly, $f$ commutes with the actions $\cov{\act}$ and $\cov{\bct}$ because $f$ commutes with $\act$ and $\bct$:
\begin{align*}
f\circ \cov \act &= f\circ \act \circ (J_1\times \ident_X)\\
&= \bct\circ (\ident_{G_1}\times f)\circ (J_1\times \ident_X)\\
&= \bct\circ (J_1\times \ident_Y)\circ (\ident_{\cov G_1}\times f)\\
&= \cov \bct \circ (\ident_{\cov  G_1}\times f).
\end{align*}
Moreover, $p=q\circ f$. So, we conclude that $f^*$ is a morphism in $\cov \opens$-$\Loc$. Finally, let $\opens(X)$ and $\opens(Y)$ be arbitrary $\opens$-locales, and let $f^*:\opens(Y)\to \opens(X)$ be a map of $\cov \opens$-locales. We want to show that $f$ is a $G$-equivariant map. In fact, since $\cov{G}$-$\Loc$ and $\cov \opens$-$\Loc$ are equivalent, $f:(X,\cov{\act},p)\to(X,\cov{\bct},p)$ is a $\cov{G}$-equivariant map. Now, taking into account that both $(X,\cov{\act},p)$ and $(X,\cov{\bct},q)$ satisfy the descent condition, we have
\[
\xymatrix{
& {\cov{G}_1}\times_{G_0}X  \ar_{J_1\times \ident_X}[dl] \ar^{\ident_{{\cov{G}_1}}\times f}[rr]  \ar_{\cov{\act}}[dd]  && {\cov{G}_1}\times_{G_0}Y \ar^{J_1\times \ident_Y}[dr]  \ar_{\cov{\bct}}[dd]  \\
G_1\times_{G_0}X \ar@{.>}[dr]_{\exists \act} &&&& G_1\times_{G_0}Y \ar@{.>}[dl]^{\exists \bct} \\
& X\ar^{f}[rr] \ar_{p}[dr] && Y\ar^{q}[dl]\\
&& G_0.
}
\]  
Clearly, $p=q\circ f$. And, since $f\circ \cov{\act}=\cov{\bct}\circ (\ident_{{\cov{G}_1}}\times f )$, we have
\begin{align*}
(f\circ \act)\circ (J_1\times \ident_X) & = \bct\circ ((J_1\times \ident_Y)\circ (\ident_{{\cov{G}_1}}\times f )).
\end{align*}
Furthermore, the diagram
\[
\xymatrix{
 {\cov{G}_1}\times_{G_0}X  \ar^{J_1\times f}[drr] \ar_{J\times \ident_X}[d] \ar^{\ident_{{\cov{G}_1}}\times f}[rr] &&  {\cov{G}_1}\times_{G_0}Y  \ar^{J_1\times \ident_Y}[d]  \\
G_1\times_{G_0}X \ar_{\ident_{G_1}\times f}[rr] && G_1\times_{G_0}Y.}
\] 
is commutative, and thus
\begin{align*}
\bct\circ ((J_1\times \ident_Y)\circ (\ident_{{\cov{G}_1}}\times f )) & = \bct \circ (J_1\times f)\\
& = \bct \circ ((\ident_{G_1}\times f)\circ(J_1\times \ident_X)).
\end{align*}
Hence, 
\[
(f\circ \act)\circ (J_1\times \ident_X)= (\bct \circ (\ident_{G_1}\times f))\circ(J_1\times \ident_X).
\]
Finally, since $J_1\times \ident_X$ is an epimorphism due to Definition~\ref{def: etalecoveredgroupoid}\eqref{def: etalecoveredgroupoid3}, we can conclude
\[
f\circ \act= \bct \circ (\ident_{G_1}\times f). \qed
\]
\end{proof}

\begin{remark}
If $G$ is an \'etale-covered groupoid then any map $f:X\to Y$ of $G$-locales is $G$-equivariant if and only if it is $\cov G$-equivariant. This in turn is equivalent to $f^*$ being $\opens(\cov G)$-equivariant, by the results for \'etale groupoids. But $f$ being $G$-equivariant also implies that $f^*$ is $\opens(G)$-equivariant, that is, a homomorphism of $\opens(G)$-modules, by \cite{GSQS}*{Lemma 3.6}.
\end{remark}

%%%%%%%%%%%%%%%%%%%%%%%%%%%%%%%%%%%%%
%%%%%%%%%%%%%%%%%%%%%%%%%%%%%%%%%%%%%

\subsection{Orbits}

Recall that if $G$ is an open groupoid and $X$ is a $G$-locale, the \emph{orbit locale} $X/G$ can be constructed as the following coequalizer in $\Loc$:
\begin{equation}
\xymatrix{
G_1\times_{G_0} X\ar@<0.7ex>[rr]^-{\act}\ar@<-0.7ex>[rr]_-{\pi_2}&& X \ar@{->>}[r]^-{\pi}& X/G.
}
\end{equation}

\begin{definition}
Let $G$ be an \'etale-covered groupoid with inverse-embedded quantal frame $\opens=\opens(G)$, and let $X$ be a (left) $G$-locale. An element $x\in X$ is \emph{invariant} if the following equivalent conditions hold (regarding $X$ as an $\opens$-module):
\begin{enumerate}
\item For all $q\in \opens$ we have $qx\leq x$;
\item $1_{\opens}x\leq x$;
\item $1_{\opens}x=x$.
\end{enumerate}
The set of invariant opens of $X$ will be denoted by $I_{\opens}(X)$.
\end{definition}

\begin{lemma}\label{lemma:IO(X)subsetIQ(X)}
Let $G$ be an \'etale-covered groupoid, and let $X$ be a left $G$-locale. Then $I_{\opens}(X)\subset I_{\cov\opens}(X)$.
\end{lemma}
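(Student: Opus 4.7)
The plan is to exhibit the inclusion by working at the level of the top elements of $\opens$ and $\cov\opens$. By definition, $x \in I_{\opens}(X)$ is characterized by $1_{\opens} x \le x$, and $x \in I_{\cov\opens}(X)$ by $1_{\cov\opens} x \le x$. Since both module structures are available on $X$ simultaneously (the $\opens$-module structure coming from the descent condition, the $\cov\opens$-module structure from the lift of the action along $J$), the task reduces to comparing the two ``full'' actions on a single element.

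The key tool is Lemma~\ref{lemma:j(aleqax)}, which compares the two actions via the embedding: $j(a) x \le a x$ for every $a \in \opens$ and $x \in X$. Applied to $a = 1_{\opens}$, this yields $j(1_{\opens}) x \le 1_{\opens} x$. Because $j:\opens\to\cov\opens$ is a frame monomorphism by Definition~\ref{def:ieqf}, it preserves the top element, so $j(1_{\opens}) = 1_{\cov\opens}$, and therefore
\[
1_{\cov\opens} x \le 1_{\opens} x.
\]

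Combining this with the hypothesis $1_{\opens} x \le x$ whenever $x \in I_{\opens}(X)$ gives $1_{\cov\opens} x \le x$, which is exactly the condition $x \in I_{\cov\opens}(X)$. There is no real obstacle here: once Lemma~\ref{lemma:j(aleqax)} is in hand, the argument is a one-line evaluation at the top element, and the fact that $j$ preserves top is immediate from its being a frame homomorphism.
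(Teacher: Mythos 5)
Your proposal is correct and follows exactly the paper's own argument: both reduce the claim to the chain $1_{\cov\opens}x = j(1_{\opens})x \le 1_{\opens}x \le x$, using that $j$ preserves the top element (being a frame homomorphism) together with Lemma~\ref{lemma:j(aleqax)}. Nothing is missing.
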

\begin{proof}
Notice that for all $x\in I_{\opens}(X)$ we have
\begin{align*}
1_{\cov\opens}\cdot x &= j(1_{\opens})\cdot x\le 1_{\opens}x\le x& \text{(by Lemma~\ref{lemma:j(aleqax)}).}
\end{align*} 
This implies that $x\in  I_{\cov\opens}(X)$. \qed
\end{proof}

For \'etale-covered groupoids we have a simple description of these quotients in terms of quantale modules.

\begin{theorem}\label{theorem:invariantelements}
Let $G$ be an \'etale-covered groupoid, and let $X$ be a left $G$-locale. The quotient $X/G$ coincides with the set of invariant elements of the action. Moreover, $I_{\cov\opens}(X)=I_{\opens}(X)$. 
\end{theorem}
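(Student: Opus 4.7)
The plan is to reduce the problem to the \'etale case for $\cov G$ by proving the chain of inclusions
\[
X/G\ \subseteq\ I_\opens(X)\ \subseteq\ I_{\cov\opens}(X)\ =\ X/\cov G\ =\ X/G,
\]
after which all four sets coincide and both assertions of the theorem follow at once. Here each quotient is regarded as a subframe (equivalently, a subset) of $X$ via the equalizer description of the coequalizer after passing from $\Loc$ to $\Frm$; explicitly, $X/G = \{x\in X\st \bct^*(x)=\pi_2^*(x)\}$ and similarly $X/\cov G = \{x\in X\st \cov\bct^*(x)=\cov\pi_2^*(x)\}$.

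The first step is to show $X/G = X/\cov G$ as subsets of $X$. Using the descent factorizations $\cov\bct = \bct\circ(J_1\times\ident_X)$ and $\cov\pi_2 = \pi_2\circ(J_1\times\ident_X)$ we obtain $\cov\bct^* = (J_1\times\ident_X)^*\circ\bct^*$ and the analogous identity for $\cov\pi_2^*$, so the equality of the two equalizers reduces to the injectivity of $(J_1\times\ident_X)^*$. Definition~\ref{def: etalecoveredgroupoid}\eqref{def: etalecoveredgroupoid3} guarantees this when $X = G_1$; for an arbitrary $G$-locale $X$ it follows from the fact that $J_1$ is epi in $\Loc$ (being the 1-cell of an epi functor of groupoids) together with pullback-stability of epimorphisms in $\Loc$, which forces $J_1\times\ident_X$ to be epi and hence $(J_1\times\ident_X)^*$ to be a monomorphism.

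The second step is the \'etale identity $X/\cov G = I_{\cov\opens}(X)$. Because $\cov G$ is \'etale, formula \eqref{eq:rightadjformula2} gives $\cov\bct^*(x) = \V_{s\in\ipi(\cov\opens)} s\otimes s^*x$, while $\cov\pi_2^*(x) = 1_{\cov\opens}\otimes x$; unitality of $\cov\opens$ together with the isomorphism $\cov G$-$\Loc\cong \cov\opens$-$\Loc$ then turns the equality of these two expressions into the condition $1_{\cov\opens}x = x$, i.e., $x\in I_{\cov\opens}(X)$.

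The inclusion $X/G\subseteq I_\opens(X)$ is immediate from the adjunction $\bct_!\dashv\bct^*$: if $\bct^*(x) = \pi_2^*(x) = 1_\opens\otimes x$, then $1_\opens x = \bct_!(1_\opens\otimes x) = \bct_!(\bct^*(x)) \le x$, so $x\in I_\opens(X)$. The remaining inclusion $I_\opens(X)\subseteq I_{\cov\opens}(X)$ is exactly Lemma~\ref{lemma:IO(X)subsetIQ(X)}, closing the circle. The only step requiring any real care is the pullback-stability argument extending Definition~\ref{def: etalecoveredgroupoid}\eqref{def: etalecoveredgroupoid3} from $X = G_1$ to arbitrary $G$-locales, but this is essentially a formality about surjections in $\Loc$.
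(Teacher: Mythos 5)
Your overall route is the same as the paper's: both arguments rest on the adjunction inequality $\bct_!\circ\bct^*\le\ident$, on Lemma~\ref{lemma:IO(X)subsetIQ(X)}, and on transporting the known \'etale identification $X/\cov G=I_{\cov\opens}(X)$ back along the cover; the only organizational difference is that you close a cycle of inclusions where the paper proves the equalizer characterization of $I_\opens(X)$ directly. However, the step you dismiss as ``essentially a formality'' is precisely where your argument breaks. Epimorphisms of locales --- equivalently, maps whose inverse image is an injective frame homomorphism --- are \emph{not} stable under pullback in $\Loc$; open surjections and proper surjections are pullback-stable, but arbitrary surjections are not. This is exactly why Definition~\ref{def: etalecoveredgroupoid}\eqref{def: etalecoveredgroupoid3} postulates that $J_1\times\ident_{G_1}$ is an epimorphism as a separate axiom: if pullback-stability held, that condition would already follow from $J_1$ being epi [part of condition \eqref{def: etalecoveredgroupoid1}] and the axiom would be redundant. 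Hence your derivation of the injectivity of $(J_1\times\ident_X)^*=j\otimes\ident_X$ for an arbitrary $G$-locale $X$ is not valid, and this injectivity is essential to your chain, since it is what gives the inclusion $X/\cov G\subseteq X/G$ that closes the cycle.

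What is actually needed --- and what the paper uses, in the equivalent form ``$j\otimes\ident_X$ is monic'' --- is that for every $\opens$-locale $X$ the comparison map $j\otimes\ident_X:\opens\otimes_B X\to\cov\opens\otimes_B X$ is injective. In the paper this monicity is built into the descent framework of Definition~\ref{descend} and Lemma~\ref{lemma:bijection, qoloc} (it is what makes the factorization of $\act^*$ through $j\otimes\ident_X$ unique), rather than being deduced from a stability property of epimorphisms; the paper then concludes $\bct^*(x)=\pi_2^*(x)$ from $(j\otimes\ident_X)(\bct^*(x))=(j\otimes\ident_X)(\pi_2^*(x))$ by invoking that monomorphism directly. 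Your proof would be repaired by doing the same, or by supplying an actual argument that $j\otimes\ident_X$ is monic for the $X$ at hand; it cannot be repaired by the general categorical principle you cite, because that principle is false in $\Loc$. The remaining three steps of your cycle (the adjunction argument for $X/G\subseteq I_\opens(X)$, Lemma~\ref{lemma:IO(X)subsetIQ(X)}, and the \'etale identity via \eqref{eq:rightadjformula2}) are correct and match the paper.
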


\begin{proof}
Since $(X,\bct)$ is a $G$-locale,  due to Theorem~\ref{funct3, theo: gloccongqoloc} there exists a $\cov G$-action $\act$ such that $(j\otimes \ident_X)\circ \bct^*=\act^*$ in $\Frm$.
Let us prove that the following diagram is an equalizer in \sets, where $\iota$ is the frame inclusion:

\begin{equation*}
\xymatrix{
I_{\opens}(X) \ar@{^{(}->}[r]^-{\iota} & X \ar@<0.7ex>[rr]^-{\bct^*}\ar@<-0.7ex>[rr]_-{\pi^*_2}&& \opens\otimes_{B}X.
}
\end{equation*}
In other words, we need to show that $x\in I_{\opens}(X)$ is invariant if and only if 
\begin{equation}\label{eq:pi2equalsbct}
\pi^*_2(x)=\bct^*(x).
\end{equation}
If \eqref{eq:pi2equalsbct} holds, we have
\begin{align*}
1_{\opens}x &= \bct_!(1_{\opens}\otimes x) =\bct_!(\pi^*_2(x)) =\bct_!(\bct^*(x))\le x& \text{($\bct_!\circ\bct^*\le \ident$),} 
\end{align*}
so $x\in I_{\opens}(X)$. Conversely, the condition
$1_{\opens}x\le x$ implies
\[
1_{\opens}\otimes x\le \bct^*(x)=\V_{qy\le x} q\otimes y.
\]
And, because $(j\otimes \ident_X)\circ \bct^*=\act^*$, we have
\begin{align*}
(j\otimes \ident_X)\circ \bct^*(x)=\act^*(x)&=\V_{u\in \ipi(\cov\opens)} u\otimes u^*x\\
&\le \V_{u\in \ipi(\cov\opens)} u\otimes x\quad\quad \begin{minipage}{2cm}\text{($x\in  I_{\cov\opens}(X)$ due}\\ \text{to Lemma~\ref{lemma:IO(X)subsetIQ(X)})}\end{minipage}\\
&=1_{\cov\opens}\otimes x=j(1_{\opens})\otimes x = (j\otimes \ident_X)(\pi^*_2(x)).
\end{align*}
Hence,  $\bct^*=\pi^*_2$ because $j\otimes \ident_X$ is monic, and thus \eqref{eq:pi2equalsbct} holds. Finally, if $x\in I_{\cov\opens}(X)$ we have
\begin{align*}
(j\otimes \ident_X)\circ \bct^*(x) &=\act^*(x)=\cov\pi^*_2(x)&\text{($x\in I_{\cov\opens}(X)$)}\\
&= (j\otimes \ident_X)\circ \pi^*_2(x).
\end{align*}
Hence, $\bct^*(x)= \pi^*_2(x)$ because $j\otimes \ident_X$ is monic, so we conclude that $I_{\cov\opens}(X)\subset I_{\opens}(X)$. And $I_{\opens}(X)\subset I_{\cov\opens}(X)$ holds by Lemma~\ref{lemma:IO(X)subsetIQ(X)}. \qed
\end{proof}

From now on, given an \'etale-covered groupoid $G$, we shall denote the set of invariant elements of a left $G$-locale $X$ by $I(X)$.

%%%%%%%%%%%%%%%%%%%%%%%%%%%%%%%%%%%%%
%%%%%%%%%%%%%%%%%%%%%%%%%%%%%%%%%%%%%

\subsection{Sheaves}\label{funct3, sec: openssheaves}

Recall that a $G$-action whose anchor map is a local homeomorphism is called a \emph{sheaf} over $G$ or simply a $G$-sheaf. Furthermore, if $G$ is an \'etale groupoid then $BG$ (the topos of equivariant sheaves on $G$) is equivalent to the category of sheaves on the involutive quantale $\opens(G)$ of the groupoid \cite{GSQS}. 
The purpose of this section is to generalize the latter. Indeed we shall prove that the topos $BG$ of an \'etale-covered groupoid $G$ is isomorphic to the category of $\opens(G)$-sheaves.   

\begin{definition}\label{oq-sheaf}
Let $\opens$ be an inverse-embedded quantal frame. By an \emph{$\opens$-sheaf} $X$ will be meant an $\cov \opens$-sheaf $X$ (equivalently, a complete Hilbert $\cov\opens$-module) such that for all $x,y\in X$ we have
\[
\langle x,y \rangle\in j(\opens).
\]
The \emph{category of $\opens$-sheaves} is denoted by $\opens$-\textit{Sh}. It has the $\opens$-sheaves as objects, and its morphisms are the sheaf homomorphisms. Note that $\opens$-\textit{Sh} is a full subcategory of $\cov\opens$-\textit{Sh}. 
\end{definition}

\begin{theorem}\label{qo-sheaves}
Let $G$ be an \'etale-covered groupoid, and let $X$ be a $\cov{G}$-sheaf. Then $X$ satisfies the descent condition if and only if the inner product induced by $X$ is valued in $j(\opens(G))$. 
\end{theorem}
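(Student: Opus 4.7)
The plan is to use the Hilbert module description of $\cov\opens$-sheaves (Theorem~\ref{theo: Hilbertisosheaves}) together with the characterization of descent from Lemma~\ref{lemma:bijection, qoloc}: namely, $X$ satisfies the descent condition if and only if the right adjoint $\alpha^*$ of the $\cov\opens$-action on $X$ factors through $j\otimes\ident_X$ in $\Frm$. The bridge between the inner product and $\alpha^*$ is the Hilbert-basis formula
\[
\alpha^*(y)=\V_{t\in\Gamma}\langle y,t\rangle\otimes t,
\]
which follows from the Hilbert-basis expansion $y=\V_t\langle y,t\rangle t$ by taking right adjoints of the action; this is the key identity used in both directions.

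For the easy direction $(\Leftarrow)$, suppose every inner product lies in $j(\opens(G))$. Writing $\langle y,t\rangle=j(a_{y,t})$ for each Hilbert basis element $t\in\Gamma$, the formula above gives
\[
\alpha^*(y)=\V_{t\in\Gamma}j(a_{y,t})\otimes t=(j\otimes\ident_X)\Bigl(\V_{t\in\Gamma}a_{y,t}\otimes t\Bigr),
\]
so $\alpha^*$ factors through $j\otimes\ident_X$, and the descent condition follows from Lemma~\ref{lemma:bijection, qoloc}.

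For the hard direction $(\Rightarrow)$, assume descent; then $\alpha^*=(j\otimes\ident_X)\circ\beta^*$, uniquely, since $j\otimes\ident_X$ is monic by Definition~\ref{def:ieqf}\eqref{def:ieq2}. The explicit formula $\alpha^*(x)=\V_{u\in\ipi(\cov\opens)}u\otimes u^*x$ together with the identity $\V_{u\in\ipi(\cov\opens)}uu^*=e$ in an inverse quantal frame yields
\[
\langle x,y\rangle=\cov\mu\bigl((\ident\otimes\langle-,y\rangle)(\alpha^*(x))\bigr),
\]
which allows the inner product to be recovered from $\alpha^*$. The plan is to apply this identity to the factored form of $\alpha^*(x)$ and then, via the Hilbert-basis decomposition and the monomorphicity of $j\otimes\ident_X$, match coefficients to force each Hilbert-basis inner product $\langle x,t\rangle$ to lie in $j(\opens(G))$; general inner products then follow from $\langle x,y\rangle=\V_t\langle x,t\rangle\langle t,y\rangle$ together with $j(\opens(G))^*=j(\opens(G))$ (Definition~\ref{def:ieqf}\eqref{def:ieq1}).

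The main obstacle is precisely this last coefficient-matching step: from a general factorization $\alpha^*(x)=\V_i j(a_i)\otimes x_i$, the computation above only directly yields $\langle x,y\rangle=\V_i j(a_i)\langle x_i,y\rangle$, which is a join of products taken in $\cov\opens$ and a priori need not lie in the subframe $j(\opens(G))$ (recall Remark~\ref{remark:j(opens)congopens}, which warns that $j(\opens(G))$ is not in general a subquantale). Overcoming this will require extracting the Hilbert coefficients using the explicit partial-unit formula of Theorem~\ref{theorem:innerproductformula} and the equivariance of $j_*$ (Lemma~\ref{lemma:equivarianceofj_*}), so that the contributions of partial units $u\in\ipi(\cov\opens)$ relevant to $\langle x,y\rangle$ can be shown to reduce to elements already in $j(\opens(G))$ once one uses that the pertinent $u^*x$ factors through the descent decomposition of $x$.
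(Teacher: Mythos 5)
Your backward direction ($\Leftarrow$) is essentially sound and is in the same spirit as the paper's: the paper computes $[\act^*,\cov\pi_2^*](s\otimes t)=\langle s,t\rangle\otimes t$ for local sections $s,t$ and uses join-density of $\sections_X$, which is the two-variable version of your formula $\alpha^*(y)=\V_{t}\langle y,t\rangle\otimes t$ (itself valid when $\Gamma$ is taken to be $\sections_X$, though ``by taking right adjoints'' undersells the computation needed for the inequality $\le$). The real problem is the forward direction, where you stop at the identity $\langle x,y\rangle=\V_i j(a_i)\langle x_i,y\rangle$ and only gesture at how to finish. As written this is a genuine gap, and the proposed remedy --- ``coefficient matching'' via Theorem~\ref{theorem:innerproductformula} and the equivariance of $j_*$ --- is not an argument: nothing in Lemma~\ref{lemma:equivarianceofj_*} lets you isolate individual Hilbert coefficients inside a join, and the monomorphicity of $j\otimes\ident_X$ does not let you ``match coefficients'' of a non-unique decomposition $\V_i j(a_i)\otimes x_i$.

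Moreover, your diagnosis of the obstacle is off. Remark~\ref{remark:j(opens)congopens} concerns whether $j(a)j(b)=j(ab)$, not whether $j(\opens)$ is closed under multiplication by elements of $\cov\opens$; the latter is automatic, because $j$ is by definition a homomorphism of $\cov\opens$-$\cov\opens$-bimodules, so $j(a)c=j(a\cdot c)\in j(\opens)$ for \emph{every} $c\in\cov\opens$. With that single observation your own route closes immediately: $\V_i j(a_i)\langle x_i,y\rangle=j\bigl(\V_i a_i\cdot\langle x_i,y\rangle\bigr)\in j(\opens)$. The paper avoids the issue by a different packaging: using Theorem~\ref{theorem:innerproductformula} it writes
\[
\langle x,y\rangle=(\cov\pi_1)_!\bigl(\act^*(x)\wedge\cov\pi_2^*(y)\bigr),
\]
observes that both $\act^*(x)=(j\otimes\ident_X)(\bct^*(x))$ and $\cov\pi_2^*(y)=j(1_{\opens})\otimes y$ lie in the subframe $j(\opens)\otimes_B X$ (hence so does their meet), and notes that $(\cov\pi_1)_!$ sends $j(q)\otimes x$ to $j(q)\spp_X(x)$, a product with a base-locale element only, which stays in $j(\opens)$ by the $B$-bimodule property of $j$. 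Either repair works, but your write-up contains neither.
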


\begin{proof}
Let us write $\opens=\opens(G)$ and $\cov\opens=\opens(\cov{G})$ for the quantales of the \'etale-covered groupoid $G$ and its \'etale cover $\cov G$, respectively, and $B$ for the base locale. Let $(X,\act,p)$ be a $\cov G$-sheaf that satisfies the descent condition. This is equivalent to saying that there exists a $G$-action $\bct$ such that $(j\otimes \ident_X)\circ \bct^*=\act^*$ in $\Frm$, due to Lemma~\ref{lemma:bijection, qoloc}. Let us consider the following coequalizer in $\Loc$:
\[
\xymatrix{
\cov{G_1}\times_{G_0} X \ar@<0.5ex>[r]^-{\act}\ar@<-0.5ex>[r]_-{\cov\pi_2} & X\ar@{->>}[r]^-{\pi}   & X/\cov{G}.
}
\]
Now consider the pullback  $X\times_{X/\cov{G}} X$ of the quotient map $\pi$ and the pairing map $\langle \act,\cov\pi_2\rangle$ from $\cov{G_1}\times_{G_0} X$ to $X\times_{X/\cov{G}} X$, whose inverse image homomorphism is valued in $j(\opens)\otimes_B X$, as the following derivation with $x,y\in X$ shows:
\begin{align*} 
[\act^*,\cov \pi^*_2](x\otimes y) &=\act^*(x)\wedge \cov\pi^*_2(y)=(j\otimes \ident_X)(\bct^*(x))\wedge (j\otimes \ident_X)(\pi^*_2(y)) \\
&=(j\otimes \ident_X)\circ[\bct^*,\pi^*_2](x\otimes y).  
\end{align*}
Therefore we can define the composition
\[
\xymatrix{
X\otimes_{I(X)} X \ar[rr]^-{[\act^*,\cov\pi^*_2]}&& j(\opens)\otimes_{B} X\ar[r]^-{(\cov\pi_1)_!}   & j(\opens),
}
\]
where $(\cov\pi_1)_!$, given by $j(q)\otimes x\mapsto j(q)\spp_X(x)$, is the direct image of the map of locales $\cov\pi_1:\cov G_1\times_{G_0} X\to \cov G_1$, due to \cite{Funct2}*{Lemma 3.1} (with the only difference that, contrary to the situation in that lemma, the action of $B$ on $j(\opens)$ is on the right).
Hence, for all $x,y\in X$, we have
\begin{align*}
\langle x,y\rangle &=\V_{u\in \ipi(\cov\opens)}u\spp_X(u^*x\wedge y)& \text{(by Theorem~\ref{theorem:innerproductformula})}\\
&=(\cov\pi_1)_!(\V_{u\in \ipi(\cov\opens)} u\otimes (u^*x\wedge y))\\
&=(\cov\pi_1)_!((\V_{u\in \ipi(\cov\opens)} u\otimes u^*x)\wedge (1_{\cov\opens}\otimes y))\\
&=(\cov\pi_1)_!(\act^*(x)\wedge \cov\pi^*_2(y))&\text{[by \eqref{eq:rightadjformula2}]}\\
&=(\cov\pi_1)_!([\act^*,\cov\pi^*_2](x\otimes y))\\
&\in j(\opens).
\end{align*}
Conversely, let us suppose that the inner product of $X$ is valued in $j(\opens)$. Then, for all $s,t\in\sections_X$,
\begin{align*}
[\act^*,\cov\pi^*_2](s\otimes t) &= \V_{u\in \ipi(\cov\opens)}u\otimes (u^*s\wedge t)\\
&= \V_{u\in \ipi(\cov\opens)}u\otimes \spp_X(u^*s\wedge t)t& (\text{$u^*s\wedge t$ is a subsection of $t$})\\
&= \V_{u\in \ipi(\cov\opens)}u\spp_X(u^*s\wedge t)\otimes t& (\text{$\otimes$ is over $B$})\\
&= \langle s,t\rangle \otimes t& (\text{by Theorem~\ref{theorem:innerproductformula}})\\ 
&\in  j(\opens)\otimes_{B} X& (\text{by assumption}).
\end{align*} 
Then, since $\lcc(\sections_X)$ is join-dense in $X$, we conclude that 
\[
[\act^*,\cov\pi^*_2](x\otimes y)\in j(\opens)\otimes_{B} X\ \text{for all $x,y\in X$.}
\]
This implies that $[a^*,\cov\pi^*_2]$ factors (uniquely) through a frame homomorphism $\phi$ such that the following diagram commutes in $\Frm$:
\[
\xymatrix{
X\otimes_{I(X)} X\ar[r]^-{[\act^*,\cov\pi^*_2]}\ar[dr]_-{\phi}  & \cov\opens\otimes_B X\\
& \opens\otimes_B X \ar@{^{(}->}[u]_-{j\otimes \ident_X}.
}
\]
Then the frame homomorphism given by the composition 
\[
\xymatrix{
X \ar[r]^-{\pi^*_1}& X\otimes_{I(X)} X\ar[rr]^-{[\act^*,\cov\pi^*_2]}   && j(\opens)\otimes_{B} X
}
\]
is such that 
\begin{align*} 
(j\otimes \ident_X)\circ (\phi\circ \pi^*_1)(x) &=((j\otimes \ident_X)\circ \phi)\circ\pi^*_1(x)\\ 
&=[\act^*,\cov\pi^*_2]\circ\pi^*_1(x)=[\act^*,\cov\pi^*_2](x\otimes 1_X)\\
&=\act^*(x)\wedge \cov\pi^*_2(1_X)=\act^*(x)\wedge (1_{\cov\opens}\otimes 1_X)=\act^*(x), 
\end{align*}
which implies that $X$ satisfies the descent condition with $\bct^* = \phi\circ\pi_1^*$. \qed
\end{proof}

\begin{remark}
Let $G$ be an \'etale-covered groupoid. Any principally covered $\cov{G}$-sheaf $X$ is such that $\langle X,X\rangle\in \ipi(\opens(\cov{G}))$~\cite{Funct2}*{Lemma 5.3}. Therefore the principally covered $\cov{G}$-sheaves provide an example of a class of $\cov{G}$-actions which does not satisfy the descent condition.
\end{remark}

\begin{corollary}
Let $G$ be an \'etale-covered groupoid. Then $BG$ is equivalent to $\opens(G)$-\textit{Sh}.
\end{corollary}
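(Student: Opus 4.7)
The plan is to chain together three equivalences already in place: the general equivalence $G$-$\Loc\simeq\opens(G)$-$\Loc$ of Theorem~\ref{funct3, theo: gloccongqoloc}, the étale equivalence $B\cov G\simeq\cov\opens$-$\Sh$ (combining the preliminaries and Theorem~\ref{theo: Hilbertisosheaves}), and the inner-product characterization of descent in Theorem~\ref{qo-sheaves}.

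First I would identify, under the bijection $X\mapsto\opens(X)$ of Lemma~\ref{lemma:bijection, qoloc}, what a $G$-sheaf corresponds to on the quantale side. A $G$-locale $(X,p,\bct)$ is carried to the $\cov G$-locale $(X,p,\cov\act)$ with $\cov\act=\bct\circ(J_1\times\ident_X)$; crucially the underlying locale $X$ and the anchor $p:X\to G_0$ are unchanged, and since $J_0:\cov G_0\to G_0$ is an isomorphism, $p$ is a local homeomorphism over $G_0$ iff it is one over $\cov G_0$. Hence $(X,p,\bct)$ is a $G$-sheaf exactly when $(X,p,\cov\act)$ is a $\cov G$-sheaf that in addition satisfies the descent condition.

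Now since $\cov G$ is étale, the étale-case results give an isomorphism of categories between $\cov G$-sheaves and $\cov\opens$-sheaves, with the latter being, by Theorem~\ref{theo: Hilbertisosheaves}, precisely the complete Hilbert $\cov\opens$-modules. Therefore $BG$ is equivalent to the full subcategory of $\cov\opens$-$\Sh$ consisting of those complete Hilbert $\cov\opens$-modules whose underlying $\cov\opens$-locale comes from descent in the sense of Definition~\ref{descend}. Applying Theorem~\ref{qo-sheaves}, this descent condition holds iff the inner product $\langle-,-\rangle:X\times X\to\cov\opens$ factors through $j(\opens(G))$, which is exactly Definition~\ref{oq-sheaf} of an $\opens(G)$-sheaf.

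Morphisms require no extra work: $BG$ is full in $G$-$\Loc$, $\opens(G)$-$\Sh$ is full in $\cov\opens$-$\Sh$, and both equivalences $G$-$\Loc\simeq\opens(G)$-$\Loc$ and $\cov G$-$\Sh\simeq\cov\opens$-$\Sh$ preserve and reflect morphisms, so the restriction to sheaves is automatically fully faithful. I expect no genuine obstacle here, as the work has really been done in Theorems~\ref{funct3, theo: gloccongqoloc} and~\ref{qo-sheaves}; the only point demanding a moment of care is aligning the two routes from a $G$-sheaf to a Hilbert $\cov\opens$-module (via $\opens(G)$-$\Loc$ and via $\cov\opens$-$\Sh$), which is ensured because both factor through the same lifted $\cov G$-action $\cov\act=\bct\circ(J_1\times\ident_X)$ built in Lemma~\ref{extension}.
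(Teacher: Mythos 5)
Your proposal is correct and follows essentially the same route as the paper, which simply cites Theorem~\ref{qo-sheaves} together with the \'etale-case equivalence $B\cov G\simeq\opens(\cov G)$-$\Sh$ from the earlier work; your write-up just makes explicit the intermediate identifications (lifting along $J$, the fact that $J_0$ being an isomorphism preserves the local-homeomorphism property of the anchor, and fullness of the subcategories) that the paper leaves implicit.
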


\begin{proof}
This follows from Theorem~\ref{qo-sheaves} and \cite{GSQS}*{Th.\ 4.62}. \qed
\end{proof}

%%%%%%%%%%%%%%%%%%%%%%%%%%%%%%%%%%%%%
%%%%%%%%%%%%%%%%%%%%%%%%%%%%%%%%%%%%%

\subsection{Bi-actions and functoriality}

%%%%%%%%%%%%%%%%%%%%%%%%%%%%%%%%%%%%%

\paragraph{Groupoid bilocales.}

Now we address the second aim of this paper, which is to show that the bilocales of \'etale-covered groupoids can be identified with a natural notion of bilocale for inverse-embedded quantal frames, and from this to establish a (bicategorical) equivalence that generalizes that of~\cite{Re15} between \'etale groupoids and inverse quantal frames. 

\begin{definition}\label{biactions}
Let $G$ and $H$ be open localic groupoids. A \emph{$G$-$H$-bilocale} is a locale $_GX_H$ (often denoted only by $X$), equipped with a left $G$-locale structure $(p,\act)$ and a right $H$-locale structure $(q,\bct)$ such that the following diagrams commute in $\Loc$.
\begin{enumerate}
\item $q$ is invariant
under the action of $G$:
\[
\vcenter{\xymatrix{
G_1\times_{G_0} X \ar[rr]^-{\act} \ar[d]_{\pi_2} && X \ar[d]_{q} \\
X \ar[rr]_{q}  && H_0.   }
}
\]
\item $p$ is invariant
under the action of $H$:
\[
\vcenter{\xymatrix{
X\times_{H_0} H_1 \ar[rr]^-{\bct} \ar[d]_{\pi_1} && X \ar[d]_{p} \\
X \ar[rr]_{p}  && G_0.   }
}
\]
\item Associativity:
\[
\vcenter{\xymatrix{
G_1\times_{G_0} X\times_{H_0} H_1 \ar[rr]^-{\act\times \ident_{H_1}} \ar[d]_{\ident_{G_1}\times \bct} && X\times_{H_0} H_1 \ar[d]_{\bct} \\
G_1\times_{G_0} X \ar[rr]_{\act}  && X.   }
}
\]
\end{enumerate}
A map of bilocales $f: {_G X_H}\to  {_G Y_H}$ is a map of locales which is both a map of left $G$-locales and a map of right $H$-locales. We will denote the \emph{category of $G$-$H$-bilocales} by $G$-$H$-$\Loc$.
\end{definition}

Based on the previous definition we can define the bicategory on which the rest of this paper will be based:

\begin{definition}
The \emph{bicategory of \'etale-covered groupoids $\biGrpd$} is defined as follows.
\begin{itemize}
\item The $0$-cells are the \'etale-covered groupoids.
\item The $1$-cells $X:G\to H$ are the $G$-$H$-locales.
\item The composition of $1$-cells is defined by tensor product --- given $1$-cells $X:G\to H$ and $Y:H\to K$ we define $Y\circ X:= X\otimes_H Y$ to be the coequalizer in $\Loc$
\begin{equation}
\xymatrix{
X\times_{G_0} H_1 \times_{G_0} Y\ar@<0.7ex>[rr]^-{\langle  \act\circ \pi_{12},\pi_3 \rangle}\ar@<-0.7ex>[rr]_-{\langle \pi_1,\bct\circ \pi_{23} \rangle}&& X\times_{G_0} Y \ar[r]& X\otimes_{H} Y.
}
\end{equation}
\item Given 1-cells $X,Y:G\to H$, the 2-cells $f:X\to Y$ are the maps of $G$-$H$-bilocales, with the usual composition.
\item The coherence isomorphisms are, in terms of their inverse image homomorphisms, entirely analogous to those of the bicategory of commutative unital rings.
\end{itemize}
\end{definition}

\begin{lemma}\label{diagonalcoverable}
Let $G$ be an \'etale-covered groupoid, and let $(X,p,\act)$ and $(Y,q,\bct)$ be a left and a right $G$-locale, respectively. Then 
\[
X\otimes_G Y = X\otimes_{\cov G} Y.
\]
\end{lemma}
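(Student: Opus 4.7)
The plan is to show that both coequalizers satisfy the same universal property as quotients of $X \times_{G_0} Y$ in $\Loc$. First I would recall that by Lemma~\ref{extension} the $\cov G$-actions on $X$ and $Y$ are obtained from the given $G$-actions by precomposition with $J_1$: writing $\cov{\act}, \cov{\bct}$ for the lifted $\cov G$-actions, we have $\cov{\act} = \act \circ (J_1 \times \ident_X)$ and $\cov{\bct} = \bct \circ (\ident_Y \times J_1)$. Setting
\[
\Phi := \ident_X \times J_1 \times \ident_Y : X \times_{G_0} \cov G_1 \times_{G_0} Y \longrightarrow X \times_{G_0} G_1 \times_{G_0} Y,
\]
this means that the two parallel maps $X \times_{G_0} \cov G_1 \times_{G_0} Y \rightrightarrows X \times_{G_0} Y$ defining $X \otimes_{\cov G} Y$ factor as the two parallel maps defining $X \otimes_G Y$ precomposed with $\Phi$.

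Next I would verify that $\Phi$ is a surjection of locales. Since $J:\cov G \to G$ is an epimorphic functor by Definition~\ref{def: etalecoveredgroupoid}\eqref{def: etalecoveredgroupoid1}, the map $J_1:\cov G_1 \to G_1$ is a surjection. Surjections in $\Loc$ are stable under pullback, and $\Phi$ is the pullback of $J_1$ along the projection $X \times_{G_0} G_1 \times_{G_0} Y \to G_1$, so $\Phi$ is itself a surjection, hence an epimorphism.

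With these two facts in hand, I would argue that for any locale $Z$ and map $f : X \times_{G_0} Y \to Z$, the map $f$ coequalizes the $G$-pair if and only if it coequalizes the $\cov G$-pair. The forward direction is immediate by precomposition with $\Phi$. The converse uses that $\Phi$ is epimorphic: given equality of the two composites after precomposition with $\Phi$, the two original composites $X \times_{G_0} G_1 \times_{G_0} Y \to Z$ must themselves agree. Consequently the two coequalizer quotients of $X \times_{G_0} Y$ coincide, giving $X \otimes_G Y = X \otimes_{\cov G} Y$.

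The main obstacle is confirming that $\Phi$ is a surjection of locales; this reduces to pullback-stability of surjections in $\Loc$ applied to the surjection $J_1$, which is a standard fact about localic epimorphisms.
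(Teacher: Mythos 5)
Your overall strategy is sound and is in fact essentially the dual of the paper's: the paper works in $\Frm$, identifying both tensor products as equalizers (subframes) of $X\otimes_B Y$ and showing the two defining conditions are equivalent; your $\Phi$ being an epimorphism of locales is literally the same statement as the paper's key fact that $\Phi^*=\ident_X\otimes j\otimes\ident_Y$ is a monomorphism of frames. So the proof stands or falls on that one claim, and this is where there is a genuine gap: your justification rests on the assertion that surjections in $\Loc$ are stable under pullback, and that is false. Unlike open or proper surjections, general localic surjections (equivalently, epimorphisms of $\Loc$, i.e.\ monomorphisms of $\Frm$) are not pullback-stable; this failure is classical and is precisely why Definition~\ref{def: etalecoveredgroupoid}\eqref{def: etalecoveredgroupoid3} \emph{postulates}, as a separate axiom, that the particular pullback $J_1\times\ident_{G_1}:\cov G_1\times_{G_0}G_1\to G_1\times_{G_0}G_1$ is an epimorphism, and why Definition~\ref{def:ieqf}\eqref{def:ieq2} separately requires $j\otimes\ident$ to be monic. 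If pullback-stability held, those axioms would be redundant. Moreover, even axiom \eqref{def: etalecoveredgroupoid3} only covers the pullback along $G_1$ itself, not the pullback along the projection from $X\times_{G_0}G_1\times_{G_0}Y$ for arbitrary $G$-locales $X$ and $Y$, so you cannot simply invoke it either.

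The correct way to obtain the monicity of $\ident_X\otimes j\otimes\ident_Y$ (equivalently, the surjectivity of your $\Phi$) is the route the paper takes implicitly: $j$ is a frame monomorphism by Definition~\ref{def:ieqf}, and tensoring a monomorphism of sup-lattices with a locale over $B$ preserves monicity because such locales are flat $B$-modules (this is the same flatness argument cited from \cite{RR} in the multiplicativity part of the proof of Theorem~\ref{funct3, theo: supportopens}). With that substitution your argument goes through: the two parallel pairs are related by precomposition with $\Phi$, the easy direction is precomposition, and the converse uses that $\Phi$ is epic. But as written, the step ``$\Phi$ is a surjection because surjections are pullback-stable'' is an appeal to a false general principle and must be replaced.
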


\begin{proof}
Let $\opens=\opens(G)$ and $\cov \opens=\opens(\cov G)$ be the quantales of $G$ and $\cov G$, respectively, and let $B$ be the base locale. Recall from \cite{Re15}*{Lemma 3.8, Th.\ 3.10} that the coequalizer $X\otimes_{\cov G} Y$ coincides with the frame $X\otimes_{\cov \opens} Y$ and that the latter can be identified with the subframe of elements $\xi\in X\otimes_{B} Y$ such that 
\[
[\cov\pi^*_{12}\circ \act^*,\cov\pi^*_3](\xi)=[\cov\pi^*_1,\cov \pi^*_{23}\circ \bct^*](\xi).
\]
Since $X$ and $Y$ are right and left $\opens$-locales, respectively, we have
\[
\act^*=(\ident_X\otimes j)\circ \phi^*\quad \text{and}\quad \bct^*=(j\otimes \ident_Y)\circ\psi^*,
\]
where $\phi^*:X\to X\otimes_B \opens$ and $\psi^*:Y\to \opens\otimes_B Y$ are frame homomorphisms. Similarly, the coequalizer $X\otimes_G Y$ can be identified with the subframe of elements $\xi\in X\otimes_B Y$ such that
\[
[\pi^*_{12}\circ \phi^*,\pi^*_3](\xi)=[\pi^*_1,\pi^*_{23}\circ \psi^*](\xi).
\]
Then $X\otimes_{\cov G} Y\subset X\otimes_{G} Y$. Indeed, for all $\xi\in X\otimes_{\cov \opens} Y$, we have
\begin{align*}
&(\ident_X\otimes j\otimes \ident_Y)\circ [\pi^*_1,\pi^*_{23}\circ \psi^*](\xi)\\
&=[(\ident_X\otimes j\otimes \ident_Y)\circ \pi^*_1,(\ident_X\otimes j\otimes \ident_Y)\circ\pi^*_{23}\circ \psi^*](\xi)\\
&=[\cov\pi^*_1,\cov \pi^*_{23}\circ \bct^*](\xi)\\
&=[\cov\pi^*_{12}\circ \act^*,\cov \pi^*_3](\xi)& \text{(because $\xi\in X\otimes_{\cov \opens} Y$)} \\
&= [\cov\pi^*_{12}\circ (\ident_X\otimes j)\circ \phi^*,\cov \pi^*_3](\xi)\\
&= [(\ident_X\otimes j\otimes \ident_Y)\circ \pi^*_{12}\circ \phi^*, (\ident_X\otimes j\otimes \ident_Y)\circ \pi^*_3](\xi)\\
&= (\ident_X\otimes j\otimes \ident_Y)\circ [\pi^*_{12}\circ \phi^*, \pi^*_3](\xi).
\end{align*}
Therefore, $[\pi^*_1,\pi^*_{23}\circ \psi^*](\xi)=[\pi^*_{12}\circ \phi^*, \pi^*_3](\xi)$ because $\ident_X\otimes j\otimes \ident_Y$ is monic. Conversely, assume that $\xi\in X\otimes_{G} Y$, that is
\[
[\pi^*_{12}\circ \phi^*,\pi^*_3](\xi)=[\pi^*_1,\pi^*_{23}\circ \psi^*](\xi).
\]
Hence, 
\[
(\ident_X\otimes j\otimes \ident_Y)\circ [\pi^*_1,\pi^*_{23}\circ \psi^*](\xi)=(\ident_X\otimes j\otimes \ident_Y)\circ [\pi^*_{12}\circ \phi^*, \pi^*_3](\xi).
\]
This implies that
\[
[\cov\pi^*_{12}\circ \act^*,\cov\pi^*_3](\xi)=[\cov\pi^*_1,\cov \pi^*_{23}\circ \bct^*](\xi),
\]
which is equivalent to saying that $\xi\in X\otimes_{\cov G} Y$. This proves that $X\otimes_{\cov G} Y=X\otimes_{G} Y$. \qed   
\end{proof}

\begin{definition}\label{QOloc}
Let $\opens_1$ and $\opens_2$ be inverse-embedded quantal frames. By an \emph{$\opens_1$-$\opens_2$-bilocale} $X$ is meant an $\cov\opens_1$-$\cov\opens_2$-bilocale such that $(\alpha_1)_*$ and $(\alpha_2)_*$ factor in $\Frm$ (necessarily uniquely) through $j\otimes\ident_X$ and $\ident_X\otimes j$, respectively, where $(\alpha_i)_*$, for $i=1,2$, is the right adjoint of the module action $\alpha_i$. The \emph{category $\opens_1$-$\opens_2$-$\Loc$ of $\opens_1$-$\opens_2$-bilocales} consists of $\opens_1$-$\opens_2$-bilocales as objects and $\cov\opens_1$-$\cov\opens_2$-bilocale maps as morphisms --- see \cite{Re15}*{Def.\ 4.1}. Therefore, the bicategory $\biQOl$ has the inverse-embedded quantal frames as $0$-cells, bilocales as $1$-cells, and the maps of bilocales as $2$-cells. The composition of $1$-cells $_{{\cov \opens}_1} X_{{\cov \opens}_2}$ and $_{{\cov \opens}_2} Y_{{\cov \opens}_3}$ is defined by
\[
Y\circ X:=X\otimes_{{\cov \opens}_2} Y,
\]
and the coherence morphisms are analogous to those of the bicategory of  unital rings.
\end{definition}

\begin{lemma}
The composition of $1$-cells $_{{\cov \opens}_1} X_{{\cov \opens}_2}$ and $_{{\cov \opens}_2} Y_{{\cov \opens}_3}$, given by
\[
Y\circ X=X\otimes_{{\cov \opens}_2} Y,
\] 
is well defined.
\end{lemma}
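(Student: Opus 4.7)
By the analogous results of \cite{Re15} for inverse quantal frames, the coequalizer $Z := X\otimes_{\cov\opens_2}Y$ is a $\cov\opens_1$-$\cov\opens_3$-bilocale with left action $\alpha:\cov\opens_1\otimes Z\to Z$ induced from $\alpha_X\otimes\ident_Y:\cov\opens_1\otimes X\otimes Y\to X\otimes Y$, and similarly for the right action $\beta:Z\otimes\cov\opens_3\to Z$. So the task is exactly to verify that $\alpha_*$ factors through $j_1\otimes\ident_Z$ and $\beta_*$ factors through $\ident_Z\otimes j_3$. By the evident left--right symmetry, it suffices to handle $\alpha_*$.

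Since $X$ is an $\opens_1$-$\opens_2$-bilocale, there is a (unique) frame homomorphism $\phi^*:X\to\opens_1\otimes_B X$ such that $(\alpha_X)_*=(j_1\otimes\ident_X)\circ\phi^*$. I would first produce a candidate frame homomorphism $\psi^*:Z\to\opens_1\otimes_B Z$ by showing that the composite $(\ident_{\opens_1}\otimes\pi)\circ(\phi^*\otimes\ident_Y):X\otimes Y\to\opens_1\otimes_B Z$ (with $\pi:X\otimes Y\twoheadrightarrow Z$ the quotient map) coequalises the two parallel maps that define $Z$, so that it descends to a frame homomorphism $\psi^*:Z\to\opens_1\otimes_B Z$. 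Then I would verify by direct computation, using that $(\alpha_X)_*=(j_1\otimes\ident_X)\circ\phi^*$ and that $\alpha$ is induced from $\alpha_X\otimes\ident_Y$, that
\[
\alpha_* = (j_1\otimes\ident_Z)\circ\psi^*.
\]
Uniqueness of $\psi^*$ then follows because $j_1\otimes\ident_Z$ is monic: indeed $j_1$ is a frame monomorphism and tensor product of frames with a fixed frame preserves monomorphisms, so $j_1\otimes_B\ident_Z$ is a monomorphism of frames as required.

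\textbf{Main obstacle.} The delicate step is the descent of $(\ident_{\opens_1}\otimes\pi)\circ(\phi^*\otimes\ident_Y)$ through the coequalizer that defines $Z$, i.e.\ checking that it equalises the two maps $X\otimes\cov\opens_2\otimes Y\rightrightarrows X\otimes Y$ coming from the right $\cov\opens_2$-action on $X$ and the left $\cov\opens_2$-action on $Y$. This amounts to compatibility of $\phi^*$ with the right $\cov\opens_2$-module structure on $X$, which is a consequence of the bimodule axioms (and ultimately of the fact that $\phi^*$ is the right adjoint of the left action restricted through $j_1$, while the right $\cov\opens_2$-action commutes with the left $\cov\opens_1$-action). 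Once this is set up, the rest is formal: everything reduces to manipulations of tensor products, to the preservation of monomorphisms by tensoring against the flat $B$-modules $\cov\opens_2$ and $Z$, and to the fact that coequalisers in $\Frm$ are preserved by tensoring with a fixed frame.
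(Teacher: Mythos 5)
Your proposal is correct and follows essentially the same route as the paper: invoke the results of \cite{Re15} to get the $\cov\opens_1$-$\cov\opens_3$-bilocale structure on $X\otimes_{\cov\opens_2}Y$, and then show that the induced actions inherit the factorizations through $j_1\otimes\ident$ and $\ident\otimes j_3$ from those of $X$ and $Y$, with uniqueness coming from monicity of the tensored embeddings. The only cosmetic difference is that the paper works with the subframe description of $X\otimes_{\cov\opens_2}Y$ inside $X\otimes_B Y$ (and additionally records the identification with the groupoid tensor $X\otimes_H Y$ via Lemma~\ref{diagonalcoverable}), whereas you phrase the key step as descent through the coequalizer; these are dual presentations of the same verification.
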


\begin{proof}
Let $\opens_1=\opens(G)$, $\opens_2=\opens(H)$, and $\opens_3=\opens(K)$, and also $B_1=\opens(G_0)$, $B_2=\opens(H_0)$, and $B_3=\opens(K_0)$ (the quantales and the base locales of the \'etale-covered groupoids $G$, $H$ and $K$, respectively). Then
\begin{align*}
X\otimes_{{\cov \opens}_2} Y &= X\otimes_{\cov H} Y& \text{(due to \cite{Re15}*{Th.\ 3.10})}\\
&=  X\otimes_{H} Y& \text{(by Lemma~\ref{diagonalcoverable}),}
\end{align*}
and $X\otimes_{{\cov \opens}_2} Y$
is an ${\cov \opens}_1$-${\cov \opens}_3$-bilocale due to \cite{Re15}*{Lemma 4.4}. Let us denote the actions by $\cov\act_1:\cov G_1\times_{G_0} X\to X$ and  $\cov\bct_2:Y\times_{K_0} \cov K_1\to Y$. Then, by assumption, $\cov\act^*_1$ and $\cov\bct^*_2$ factor (uniquely) in $\Frm$ through $j_G\otimes\ident_X$ and $\ident_Y\otimes j_K$. Therefore, the frame homomorphisms
\begin{eqnarray*}
\cov\act^*_1\otimes \ident_Y&:& X\otimes_{\cov\opens_2} Y\longrightarrow \opens_1\otimes_{B_1} (X\otimes_{{\cov \opens}_2} Y)\\
\ident_X\otimes \cov\bct^*_2&:&X\otimes_{\cov\opens_2} Y\longrightarrow (X\otimes_{{\cov \opens}_2} Y)\otimes_{B} \opens_3
\end{eqnarray*}
factor (uniquely) through $j_G\otimes\ident_X\otimes \ident_Y$ and $\ident_X\otimes\ident_Y\otimes j_K$ in $\Frm$.
This proves that $Y\circ X$ defines an $\opens_1$-$\opens_3$-bilocale. \qed 
\end{proof}

\begin{theorem}\label{GHLocisoquantalpairs}
Let $G$ and $H$ be \'etale-covered groupoids. The categories $G$-$H$-$\Loc$ and $\opens(G)$-$\opens(H)$-$\Loc$ are isomorphic.
\end{theorem}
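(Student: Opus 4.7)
The plan is to bootstrap from the one-sided equivalence Theorem~\ref{funct3, theo: gloccongqoloc} applied independently on each side of the bilocale, and then to verify that the bimodule compatibility between the two actions translates correctly across the equivalence. I will organize the argument as three steps: forward direction on objects, inverse direction on objects, morphisms.

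For the forward direction, given a $G$-$H$-bilocale $(X,p,\act,q,\bct)$, Theorem~\ref{funct3, theo: gloccongqoloc} applied to the left $G$-action yields an $\opens(G)$-locale structure on $X$ whose underlying left $\cov\opens(G)$-action satisfies $\cov\act=\act\circ(J_1^G\times\ident_X)$ and has inverse image factoring through $j_G\otimes\ident_X$. Dually, the right $H$-action gives a right $\opens(H)$-locale structure with $\cov\bct=\bct\circ(\ident_X\times J_1^H)$ factoring through $\ident_X\otimes j_H$. The commutation of $\cov\act$ and $\cov\bct$ --- the bilocale associativity on the covering side, which is what is needed for $X$ to be a $\cov\opens(G)$-$\cov\opens(H)$-bilocale --- then reduces to the given bilocale associativity for $(G,H)$ via these two identities and functoriality of the fibre product.

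For the inverse direction, given an $\opens(G)$-$\opens(H)$-bilocale $X$, Lemma~\ref{lemmadescend} applied on each side produces a left $G$-action $\act$ and a right $H$-action $\bct$ on $X$ obtained from $\cov\act$ and $\cov\bct$ by descent. The critical point is to verify bilocale associativity for $(G,H)$, i.e.\ equality of two maps $G_1\times_{G_0}X\times_{H_0}H_1\to X$. My plan is to precompose both with $J_1^G\times\ident_X\times J_1^H$, apply the known bilocale associativity on the covering side, and use the descent identities $\act\circ(J_1^G\times\ident_X)=\cov\act$ and $\bct\circ(\ident_X\times J_1^H)=\cov\bct$ to rewrite the resulting equality in terms of $\act$ and $\bct$. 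The conclusion on $G_1\times_{G_0}X\times_{H_0}H_1$ itself then follows provided $J_1^G\times\ident_X\times J_1^H$ is epic in $\Loc$, which I expect to follow from Definition~\ref{def: etalecoveredgroupoid}\eqref{def: etalecoveredgroupoid3} applied separately to $G$ and $H$, together with the openness of $d$ and $r$ which keeps epimorphisms stable under the relevant pullbacks. Invariance of the two anchor maps under the other action transfers immediately, since the descent simply postcomposes with a surjection.

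For morphisms, a map of $G$-$H$-bilocales is, by Definition~\ref{biactions}, one that is both left $G$-equivariant and right $H$-equivariant, and by the morphism part of Theorem~\ref{funct3, theo: gloccongqoloc} this corresponds precisely to a map whose inverse image is both a left $\cov\opens(G)$-module homomorphism and a right $\cov\opens(H)$-module homomorphism, that is, a morphism in $\opens(G)$-$\opens(H)$-$\Loc$ as defined in Definition~\ref{QOloc}. That the two assignments are mutually inverse on objects (rather than just up to isomorphism) follows from Lemma~\ref{lemma:bijection, qoloc} applied on each side. The main obstacle is the associativity descent in step two: one must verify the epimorphism property of the triple product $J_1^G\times\ident_X\times J_1^H$ and coordinate the descent identities for the two actions simultaneously. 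Once this is in place, the rest of the proof is a systematic transcription of the one-sided result on each side of $X$.
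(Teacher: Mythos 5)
Your proposal is correct and follows essentially the same route as the paper, which simply combines Theorem~\ref{funct3, theo: gloccongqoloc} with the \'etale-level bilocale isomorphism of \cite{Re15}*{Th.\ 4.8}; your steps amount to unpacking that cited result by hand (transferring the two commuting actions through the covers and checking associativity by precomposing with the epimorphism $J_1^G\times\ident_X\times J_1^H$, in the same spirit in which the paper already treats $J_1\times\ident_X$ as epic in Definition~\ref{descend} and Theorem~\ref{funct3, theo: gloccongqoloc}). No gap beyond what the paper itself leaves implicit.
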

\begin{proof}
Straightforward from Theorem~\ref{funct3, theo: gloccongqoloc} and \cite{Re15}*{Th.\ 4.8}. \qed
\end{proof}

\begin{corollary}\label{biequivalencecoverable}
The bicategories $\biGrpd$ and $\biQOl$ are bi-equivalent.
\end{corollary}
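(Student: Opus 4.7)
The plan is to assemble the bi-equivalence from the three main ingredients already in place: (i)~Theorem~\ref{theorem:protinresende}, which gives the bijection $G\mapsto\opens(G)$ between \'etale-covered groupoids and inverse-embedded quantal frames on $0$-cells; (ii)~Theorem~\ref{GHLocisoquantalpairs}, which gives an isomorphism $G$-$H$-$\Loc\cong\opens(G)$-$\opens(H)$-$\Loc$ on each hom-category, covering both the $1$-cells and the $2$-cells; and (iii)~Lemma~\ref{diagonalcoverable} together with the analogous bi-equivalence of \cite{Re15} for \'etale groupoids, which will ensure that the two candidate notions of composition of $1$-cells agree.

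First I would define a pseudofunctor $F:\biGrpd\to\biQOl$ by setting $F(G)=\opens(G)$ on $0$-cells and letting $F$ act on each hom-category via the isomorphism of Theorem~\ref{GHLocisoquantalpairs}. A pseudofunctor $F':\biQOl\to\biGrpd$ in the reverse direction is defined analogously, using the inverse bijection $\opens\mapsto\mathcal{G}(\opens)$ on $0$-cells and the inverse isomorphism on hom-categories. Since the hom-category correspondence is strict (an isomorphism of categories, not merely an equivalence), the $2$-cell data needs no adjustment.

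The step I expect to require the most care is verifying that $F$ and $F'$ preserve composition of $1$-cells up to coherent isomorphism. Given $1$-cells $_GX_H$ and $_HY_K$ in $\biGrpd$, their composite is $X\otimes_H Y$; by Lemma~\ref{diagonalcoverable} this equals $X\otimes_{\cov H} Y$, and by \cite{Re15}*{Th.\ 3.10} applied to the \'etale groupoid $\cov H$ the latter coincides with $X\otimes_{\opens(\cov H)} Y$, which by Definition~\ref{QOloc} is precisely $F(Y)\circ F(X)$ in $\biQOl$. Running the identifications in reverse gives the symmetric statement for $F'$.

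Finally, the associators and unitors in both $\biGrpd$ and $\biQOl$ are, by construction, modelled on those of the bicategory of commutative unital rings and bimodules, so once composition has been matched the coherence data is transported automatically through the identifications above. It follows that $F'\circ F$ and $F\circ F'$ are naturally equivalent to the respective identity pseudofunctors, yielding the bi-equivalence. The main obstacle is the bookkeeping needed to ensure that the three tensor products $\otimes_H$, $\otimes_{\cov H}$, and $\otimes_{\opens(\cov H)}$ are matched correctly at every step; once Lemma~\ref{diagonalcoverable} bridges $\otimes_H$ and $\otimes_{\cov H}$, and \cite{Re15} bridges $\otimes_{\cov H}$ and $\otimes_{\opens(\cov H)}$, the remainder of the argument is routine.
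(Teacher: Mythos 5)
Your proposal is correct and follows exactly the route the paper intends: the corollary is stated without an explicit proof precisely because it is assembled from Theorem~\ref{theorem:protinresende} (the $0$-cell correspondence), Theorem~\ref{GHLocisoquantalpairs} (the hom-category isomorphisms), and Lemma~\ref{diagonalcoverable} together with the well-definedness lemma for composition of $1$-cells, which is the same decomposition you use. Your additional care in matching the three tensor products $\otimes_H$, $\otimes_{\cov H}$, and $\otimes_{\opens(\cov H)}$ is exactly the bookkeeping the paper's preceding lemmas are designed to supply.
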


We conclude by showing that for a suitable $\opens_1$-$\opens_2$-bilocale the involute of an element $j(q)\in j(\opens_1)$ can be regarded as an adjoint operator.

\begin{theorem}\label{funct3, lemma:axyxay}
Let $\opens_1$ and $\opens_2$ be inverse-embedded quantal frames, and let $X$ be an $\opens_1$-$\opens_2$-bilocale which is both an open $\opens_1$-locale and an $\opens_2$-sheaf. Then, denoting the sheaf inner product by $\langle-,-\rangle$, we have
\[
\langle j(a)x,y\rangle = \langle x,j(a^*)y\rangle
\]
for all $a\in \opens_1$ and $x,y\in X$.
\end{theorem}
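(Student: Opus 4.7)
The plan is to evaluate both sides of the identity using Theorem~\ref{theorem:innerproductformula}, which expresses the $\cov\opens_2$-valued inner product on $X$ as
\[
\langle x',y'\rangle = \bigvee_{u\in\ipi(\cov\opens_2)} u\,\spp_X(u^*x'\wedge y').
\]
Applied to $\langle j(a)x, y\rangle$ and $\langle x, j(a^*)y\rangle$ this reduces the claim to a matching of the two families of joinands, that is, to an identity of the supports $\spp_X$ of certain meets in $X$.

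The first reduction is bookkeeping with the bilocale axiom: the left $\cov\opens_1$-action commutes with the right $\cov\opens_2$-action on $X$, and therefore, under the standard involution identification of the right and left $\cov\opens_2$-actions used in the sheaf description, also with the latter. Consequently $u^*(j(a)x) = j(a)(u^*x)$, turning the joinand in $\langle j(a)x,y\rangle$ into $u\,\spp_X\bigl(j(a)(u^*x)\wedge y\bigr)$.

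The substantive step uses the assumption that $X$ is an \emph{open} $\opens_1$-locale: this is the right hypothesis to produce a Frobenius-type reciprocity for the $\cov\opens_1$-action on $X$, which, combined with $j(a)^*=j(a^*)$ from Definition~\ref{def:ieqf}\eqref{def:ieq1}, should yield, for all $z,w\in X$,
\[
\spp_X\bigl(j(a)z\wedge w\bigr) = \spp_X\bigl(z\wedge j(a^*)w\bigr).
\]
Substituting $z = u^*x$ and $w = y$ and taking the join over $u\in\ipi(\cov\opens_2)$ then aligns the two suprema and closes the argument.

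The main difficulty is extracting this Frobenius identity from openness. I would first establish it for partial units $v\in\ipi(\cov\opens_1)$ in place of $j(a)$, exploiting that such a $v$ acts essentially as a local isomorphism whose inverse image interacts nicely with meets and with $\spp_X$ by stability; the case of a general $j(a)$ then follows by decomposing $j(a)=\bigvee_{v\leq j(a)}v$ as a join of partial units of the inverse quantal frame $\cov\opens_1$, distributing over the meet, and reassembling via the sup-lattice homomorphism property of $\spp_X$ together with $v^*=v^{-1}$ on partial units.
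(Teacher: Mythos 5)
Your skeleton coincides with the paper's: both arguments decompose $j(a)$ as the join of the partial units $u\in\ipi(\cov\opens_1)$ with $u\le j(a)$, reduce to the adjointness of a single partial unit, and reassemble by joins using $j(a)^*=j(a^*)$ together with the fact that the action, the inner product and $\spp_X$ all preserve joins. Your preliminary reductions (unfolding both sides via Theorem~\ref{theorem:innerproductformula} and commuting the two actions by the bilocale associativity axiom) are fine, as is the final reassembly.

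The difference, and the problem, lies in the single-partial-unit step. The paper obtains $\langle us,t\rangle=\langle s,u^*t\rangle$ by citing \cite{Funct2}*{Th.\ 3.8}, which is precisely where the hypothesis that $X$ is an \emph{open} $\opens_1$-locale enters. You instead reduce to the equivalent identity $\spp_X(vz\wedge w)=\spp_X(z\wedge v^*w)$ for $v\in\ipi(\cov\opens_1)$, but this is only asserted (``should yield''), supported by a heuristic about $v$ acting as a local isomorphism. Since everything else in your argument is join-bookkeeping, this identity carries the entire content of the theorem: to close it you would need to show (i) that $v$ acts on $X$ by a partial isomorphism compatible with binary meets on its domain --- this is exactly what openness of the left action buys and is essentially the cited theorem --- and (ii) that $\spp_X$, which is built from the right-hand $\cov\opens_2$-structure, is unchanged by that action, which follows from invariance of the right anchor under the left action. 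As written, the crux of the proof is a plausibility argument rather than a proof; if you are permitted to invoke \cite{Funct2}*{Th.\ 3.8} directly, your argument collapses to the paper's.
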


\begin{proof}
This follows directly from \cite{Funct2}*{Th.\ 3.8}:
\[
\langle j(q)s,t\rangle = \V_{\substack{u\in \ipi(\cov \opens_1)\\ u\leq j(q)}}\langle us,t\rangle
= \V_{\substack{u\in \ipi(\cov \opens_1)\\ u\leq j(q)}}\langle s,u^*t\rangle
= \langle s,j(q^*)t\rangle. \qed
\]
\end{proof}

\begin{bibdiv}

\begin{biblist}

\bibselect{bibliography}

\end{biblist}

\end{bibdiv}

\vspace*{5mm}
\noindent {\sc
Centro de An\'alise Matem\'atica, Geometria e Sistemas Din\^amicos
Departamento de Matem\'{a}tica, Instituto Superior T\'{e}cnico\\
Universidade de Lisboa\\
Av.\ Rovisco Pais 1, 1049-001 Lisboa, Portugal}\\
{\it E-mail:} {\sf quijano.juanpablo@gmail.com}, {\sf pmr@math.tecnico.ulisboa.pt}

\end{document}